\newtheorem{theorem}{Theorem}[section]
\newif\ifskip
\newif\ifrevised
\newtheorem{remark}[theorem]{\bf Remark}
\newtheorem{example}[theorem]{\bf Example}
\newtheorem{lemma}[theorem]{\bf Lemma}
\newtheorem{definition}[theorem]{\bf Definition}
\newtheorem{proposition}[theorem]{\bf Proposition}
\newtheorem{corollary}[theorem]{\bf Corollary}
\newtheorem{Definitions}[theorem]{\bf Definitions}
\newtheorem{Examples}[theorem]{\bf Examples}
\newtheorem{problem}[theorem]{\bf Problem}
\newcommand{\N}{{\mathbb N}}
\newcommand{\Z}{{\mathbb Z}} 
\newcommand{\R}{{\mathbb R}} 
\newcommand{\C}{{\mathbb C}} 
\newcommand{\Q}{{\mathbb Q}} 
\newcommand{\bP}{{\mathbf P}} 
\newcommand{\bQ}{{\mathbf Q}}
\newcommand{\cH}{\mathcal{H}} 
\newcommand{\cC}{\mathcal{C}} 
\newcommand{\cR}{\mathcal{R}} 
\newcommand{\cD}{\mathcal{D}} 
\newcommand{\cS}{\mathcal{S}} 
\newcommand{\cG}{\mathcal{G}} 
\newcommand{\cK}{\mathcal{K}} 
\newcommand{\bF}{\mathbf{F}} 
\newcommand{\bX}{\mathbf{X}} 
\newcommand{\bY}{\mathbf{Y}} 
\newcommand{\ba}{\mathbf{a}} 
\newcommand{\bb}{\mathbf{b}} 
\newcommand{\bB}{\mathbf{B}} 
\newcommand{\bv}{\mathbf{v}} 
\newcommand{\bi}{\mathbf{\i}} 
\newcommand{\bU}{\mathbf{U}} 
\newcommand{\per}{\mathrm{per}} 
\newcommand{\fA}{\mathfrak{A}} 
\newcommand{\SOL}{\mathrm{SOL}}
\newcommand{\MSOL}{\mathrm{MSOL}}
\newcommand{\SOLEVAL}{\mathrm{SOLEVAL}}
\begin{document}
\begin{frontmatter}
\title{A Logician's View of Graph Polynomials
}

\author[jam]{J.A.~Makowsky\corref{cor1}\fnref{fn1}}
\ead{janos@cs.technion.ac.il}
\ead[url]{http://www.cs.technion.ac.il/~janos}

\author[evr]{E.V.~Ravve\fnref{fn2}}
\ead{cselena@braude.ac.il}

\author[tk]{T.~Kotek\fnref{fn0}}
\ead{kotek@forsyte.at}
\ead[url]{http://forsyte.at/people/kotek}

\cortext[cor1]{Corresponding author}
\fntext[fn0]{Work done in part while the author was visiting the Simons Institute
for the Theory of Computing in Fall 2016.}
\fntext[fn1]{Work done in part while the author was visiting the Simons Institute
for the Theory of Computing in Spring and Fall 2016.}
\fntext[fn2]{Visiting scholar at the Faculty of Computer Science, Technion--IIT, Haifa, Israel}
\address[tk]{
Institut f\"ur Informationssysteme, Technische Universit\"at Wien Vienna, Austria}
\address[jam]{Department of Computer Science, Technion--IIT, Haifa, Israel}
\address[evr]{Department of Software Engineering, ORT-Braude College, Karmiel, Israel}
\begin{abstract}
Graph polynomials are graph parameters invariant under graph isomorphisms
which take values in a polynomial ring with a fixed finite number of indeterminates.
We study graph polynomials from a model theoretic point of view.
In this paper we distinguish between the graph theoretic (semantic) and the algebraic (syntactic)
meaning of graph polynomials. 
Graph polynomials appear in the literature either as generating functions,
as generalized chromatic polynomials, or as polynomials derived via determinants of
adjacency or Laplacian matrices.
We show that these forms are mutually incomparable, and propose
a unified framework based on definability in Second Order Logic.
We show that this comprises virtually all examples of graph polynomials with a fixed finite
set of indeterminates. 
Finally we show that the location of zeros and stability of graph polynomials 
is not a semantic property.
The paper emphasizes a model theoretic view. It gives a unified exposition of classical
results in algebraic combinatorics together with new and
some of our previously obtained results scattered in the graph theoretic literature.
\end{abstract}
\begin{keyword}
Graph polynomials 
\sep  Second Order Logic 
\sep Definability
\sep Chromatic Polynomials 
\sep Generating functions
\end{keyword}

\end{frontmatter}
\newpage
\small
\tableofcontents
\newpage
\sloppy
\normalsize
\section{Introduction}
This paper gives a logician's view of some aspects of graph polynomials.
A short version was given as an invited lecture by the first author 
at WOLLIC 2016, \cite{pr:MakowskyRavve2016wollic}.

A graph $G=(V(G), E(G))$ is given by the set of vertices $V(G)$ and a symmetric edge-relation $E(G)$.
We denote by $n(G)$ the number of vertices, by $m(G)$ the number of edges, by
$k(G)$ the number of connected components of a graph $G$, and by $\mathcal{G}$
the class of finite graphs. 

Graph polynomials are graph invariants with values in a polynomial ring $\mathcal{R}$, 
usually $\Z[ \mathbf{X}]$ with 
$\mathbf{X}=(X_1, \ldots , X_{\ell})$.
Let $\bP(G;\mathbf{X})$ be a graph polynomial of the form
$$
\bP(G;\mathbf{X}) = \sum_{i_1, \ldots, i_{\ell}=0}^{d(G)} c_{i_1, \ldots, i_{\ell}}(G) 
X_1^{i_1}\cdot \ldots \cdot X_{\ell}^{i_{\ell}},
$$
where $\mathbf{X}=(X_1, \ldots, X_{\ell})$,
$d(G)$ is a graph parameter with non-negative integers as its values,
and 
$$c_{i_1, \ldots, i_{\ell}}(G): i_1, \ldots, i_{\ell} \leq d(G)$$
are integer valued graph parameters.

\begin{definition}
\label{def:computable}
A graph polynomial
{\em $\bP(G; \bX)$ is computable} if 
\begin{enumerate}[(i)]
\item
$\bP(G; \bX)$ is a Turing computable function,
and additionally, 
\item
the range of $\bP(G; \bX)$, the set
$$
\{ p(\bX) \in \Z[\bX] : \mbox{ there is a graph  } G \mbox{  with  } \bP(G;\bX) = p(\bX) \}
$$
is Turing decidable.
\end{enumerate}
\end{definition}
The second condition is needed to make Theorem \ref{th:equiv} work.

Graph polynomials have been studied for the last hundred years, since G. Birkhoff introduced
his chromatic polynomial in \cite{ar:Birkhoff1912}. 
This was generalized by H. Whitney in the 1930ties, \cite{ar:Whitney32} and led to the 
Tutte polynomial, also called the {\em dichromate} or the {\em Tutte-Whitney polynomial}.
For a history see \cite{ar:Farr2007}.
Motivated by questions in theoretical
chemistry, the characteristic polynomial and the matching polynomial of graphs were introduced,
and studied intensively,
\cite{ar:Hosoya1971,ar:HeilmannLieb72,bk:Trinajstic1992,ar:Balaban93,ar:Balaban95,ar:Hosoya2002}.
In the last 30 years many more graph polynomials appeared in the literature.
The abundance of graph polynomials which appear in the more recent literature leads to various questions:
\begin{itemize}
\item 
How to compare graph polynomials?
\item 
What kind of information may be extracted from a graph polynomial about its
underlying graph?
\item
Are there any normal forms of graph polynomials?
\end{itemize}

Ten years ago B. Zilber and the first author have discovered a connection between model theory
and graph polynomials, \cite{ar:MakowskyZilber2006,ar:KotekMakowskyZilber11}.
In \cite{ar:MakowskyRavveBlanchard2014} we introduced the distinction between syntactic and semantic properties
of graph polynomials. 
In logic two formulas are semantically (logically) equivalent if they have the same models,
in other words, if they do not distinguish between two models.
Syntactic properties of formulas refer to properties of the string which is the formula.
Prenex normal form is a syntactic property. Semantic properties of a formula are properties of the class of models
of this formula shared by the class of models of logically equivalent formulas.
For graph polynomials $P(G;\bX)$ syntactic properties are properties of the particular polynomials
$P(G;\bX)$ for each $G$, whereas two graph polynomials are semantically equivalent if they do not distinguish
between any pair of graphs, or graphs with the same number of vertices, edges and connected components.
Our discussion in the above and subsequent papers,
was mostly addressed the graph theory community.
This paper is written for the logically minded and 
is a continuation of our analysis of notions used in the literature on graph polynomials.
\subsection{Why study graph polynomials?}
The first graph polynomial, the chromatic polynomial, was introduced in 1912 by G. Birkhoff 
to study the Four Color Conjecture, \cite{ar:Birkhoff1912}. 
The emergence of the Tutte polynomial can be seen as an attempt to generalize the chromatic polynomial,
cf. \cite{ar:Tutte54,bk:Bollobas98,ar:Ellis-MonaghanMerino2011}.
The characteristic polynomial and the matching polynomial were introduced with applications 
from chemistry in mind, cf.
\cite{bk:Trinajstic1992,ar:Balaban93,ar:Balaban95,bk:CvetkovicDoobSachs1995,bk:BrouwerHaemers2012}.
Physicists study various partition functions in statistical mechanics, 
in percolation theory and in the study of phase transitions, cf. \cite{bk:NesetrilWinkler04}.
It turns out that many partition functions are incarnations of the Tutte polynomial.
Another incarnation of the Tutte polynomial  is the Jones polynomial in Knot Theory, \cite{ar:Jaeger88} 
and again \cite{bk:Bollobas98}.
The various incarnations of the Tutte polynomial have triggered an interest in other graph polynomials.
These graph polynomials are studied for various reasons:
\begin{itemize}
\item
Graph polynomials can be used to distinguish non-isomorphic graphs.
A graph polynomial is {\em complete} if it distinguishes all non-isomorphic graphs.
The quest for a complete graph polynomial which is also easy to compute failed so far for two reasons.
Either there were too many non-isomorphic graphs which could not be distinguished, and/or
the proposed graph polynomial was more difficult to compute than just checking graph isomorphism.
\item
New graph polynomials may appear when we model 
behavior of physical, chemical or biological systems.
The arguments whether a  graph polynomial is interesting,
depends on its success in predicting 
the behavior of the modeled systems.
Also the particular choice of the representation is dictated by the modeling process.
The fact that
the modeled process gives, in this case, rise to a particular graph polynomial, is secondary,
and the properties of the graph polynomial reflect more properties of the physical or chemical process
modeled, than properties of the underlying graph.
\item
New graph polynomials are also studied as part of graph theory proper.
Here one is interested in the interrelationship between various graph parameters without
particular applications in mind.
A graph polynomial is considered interesting {\em from a graph theoretic point of view}, 
if many graph parameters can be  (easily) derived from it.
\item
Graph polynomials are sometimes studied as a way of generating families of polynomials, irrespective
of their graph theoretic meaning. H. Wilf, \cite{ar:Wilf1973}
asked the question how to characterize the polynomials which
do occur as instances of chromatic polynomials of graphs as a family of polynomials.
We have addressed this approach to graph polynomials in \cite{ar:KotekMakowskyRavve2017arxiv}.
\end{itemize}
This paper deals only with the graph theoretic and logical aspects 
of graph polynomials, discarding the graph isomorphism problem
and  discarding the modeling of systems describing phenomena in the natural sciences.
We ultimately ask the question: When is a newly introduced graph polynomial
interesting from a graph theoretic or logical point of view and deserves to be studied, 
and what aspects are more rewarding in this study
than others. In particular, in the last part of this paper, 
we scrutinize the role of the location of the roots of specific graph polynomials
in terms of other graph theoretic properties.

\subsection{Why not just sequences of integers rather than polynomials?}

A graph polynomial $P(G;\bX)$ is uniquely determined by the 
sequences of its coefficients.
In practice these coefficients were usually chosen in a uniform way
having some combinatorial interpretation.
The function which associates such a sequence  with a polynomial
is rather artificial. Whether the coefficient $c_i$ is associated with $X^i$
or some other monomial $X^j$ or polynomial $g_i(X)$, or even a function
$f_i(X)$ which is not a polynomial in $X$ will depend on the graph theoretic 
question one wants to study. Historically, for the last hundred years, polynomials
were used. After it was discovered that the Four-Color-Conjecture can be formulated
as a problem about the chromatic polynomial, one is easily tempted to look for other
graph theoretic statements which be formulated in a similar way.
The abundance of such statements might make one believe that graph polynomials
are a good choice for studying graph theoretical questions.

It is conceivable that other ways of studying the same sequences are equally interesting,
but the fact is, that such investigations are absent from the literature.
Maybe our analysis of the way graph polynomials arise in general will
spur new lines of research, replacing graph polynomials by other algebraic or analytic formalisms.

\subsection{Why $\SOL$-definability?}
There are uncountably many graph polynomials if they are merely defined as graph invariants with values in
a polynomial ring.
We can impose more restrictions by imposing {\em computability} and, in an even more restrictive way, 
{\em definability} requirements.
Imposing complexity theoretic restrictions poses some serious problems, and
is studied in \cite{pr:MakowskyKotekRavve2013}.
However, it is not the subject of this paper. 

The earliest graph polynomials are the chromatic polynomial introduced in 1912, 
its generalization the Tutte polynomial, introduced in 1954 and the characteristic and matching
polynomials introduced in the 1950ties. It is not right away obvious how to find a common generalization.
The chromatic polynomial is a special case of a Harary polynomial (a generalization of the chromatic polynomial
introduced by Harary \cite{ar:GutmanHarary83}, the matching polynomial
is a special case of a generation function, and the characteristic polynomial is based on computing
a determinant of some matrix associated with a graph.

In \cite{ar:MakowskyTARSKI,ar:AverbouchGodlinMakowsky10,ar:KotekMakowskyZilber11,ar:GodlinKatzMakowsky12} 
the class of graph polynomials definable in Second Order Logic
($\SOL$),
is studied, which requires that $d(G)$ and 
$c_{\bi}(G)=c(G;\bi)$, $\bi =(i_1, \ldots, i_{\ell})$, are, even uniformly, definable in $\SOL$. 
With very few exceptions, the
graph polynomials studied in the literature are $\SOL$-definable\footnote{
Many are even definable in Monadic Second Order Logic $\MSOL$, \cite{ar:MakowskyZoo}.
The exceptions are in \cite{ar:NobleWelsh99}.
The algorithmic advantages of $\MSOL$-definability, \cite{ar:CourcelleMakowskyRoticsDAM} are
of no importance in this paper.
}.

It turns out that the $\SOL$-definable graph polynomials are the smallest class of graph polynomials
subject to some very natural closure properties which cover all the examples studied in the literature.
On the other hand we show in Sections 
\ref{se:many}
that certain naturally defined graph polynomials (the domination polynomial and certain
generalzed chromatic polynomials) cannot be written as generating fuctions or Harary polynomials.

We assume the reader is familiar with Finite Model Theory, cf. \cite{bk:EF95,bk:Libkin2004}.
The finite model theory of graph polynomials was developed in
\cite{ar:MakowskyZoo,phd:Kotek,ar:KotekMakowskyZilber11}.
For the convenience of the reader it will summarized in Section \ref{se:sol}.

Requiring that the graph polynomials are $\SOL$-definable also guarantees that their coefficients are
the result of counting {\em combinatorially meaningful} $\SOL$-definable configurations
in the underlying graph.

\subsection{On the location of roots of graph polynomials}
Up to this point we were mostly concerned with the logical presentation of graph polynomials
and justified, why our formalism of $\SOL$-definable graph polynomial is an appropriate choice.
A topic frequently studied in paper about graph polynomials is the location of the roots (zeroes)
of $P(G;\bX)$ for a fixed graph $G$.

Given a univariate graph polynomial $\bP(G;X)$
a complex number $z \in \C$ is a root of $\bP$ if there is a graph
$G$ such that $z$ is a root of $\bP(G;X)$.
Many results in the literature on graph polynomials deal with the location of
its roots. For multivariate graph polynomials the corresponding question is formulated in
terms of half-plane properties.
The location of the zeroes is a good question
to illustrate the difference between graph theoretic (semantic) and  algebraic (syntactic) properties
of graph polynomials.  
The last part of this paper shows that the location of roots is not a semantic property.

\ifskip\else
In this paper we justify, from a foundational point of view,
the definitions we have introduced in 
\cite{ar:MakowskyRavveBlanchard2014}. 
This concerns the 
restrictions of graph polynomials to graph polynomials definable in 
$\SOL$,
the various notions of equivalence of graph polynomials, 
the notions of syntactic and semantic properties of graph polynomials. 
\fi 

We first paraphrase
the main results of
\cite{ar:MakowskyRavveBlanchard2014}. 
These results are all of the form:
\begin{quote}
	(*)
	Let $U$ be a subset of the complex numbers, such as the reals, an open disk, 
	the lower or upper halfplane, or the complement thereof.
	Given a  univariate $\SOL$-definable graph polynomial $\bP(G;X)$,
	there exists a semantically equivalent $\SOL$-definable 
	graph polynomial $\bQ(G;X)$ with all its roots in $U$.
\end{quote}
They show, in a precise sense,
that the location of the roots of a  univariate graph polynomial is {\em not a semantic
property}. 
They are more of a {\em normal form property}:
Every univariate $\SOL$-definable graph polynomial $\bP(G;X)$ can be put into a semantically equivalent
form with prescribed location of its roots.

The proofs in
\cite{ar:MakowskyRavveBlanchard2014} have two parts: Finding $\bQ(G;X)$, and showing that this $\bQ(G;X)$
is $\SOL$-definable. Finding $\bQ(G;X)$ often uses some ``dirty trick'' from analysis, 
whereas showing $\SOL$-definability,
only sketched in
\cite{ar:MakowskyRavveBlanchard2014},
needs more efforts in the details.

In this paper
we extend results of 
\cite{ar:MakowskyRavveBlanchard2014} to multivariate graph polynomials $\bP(G;\bX)$.
We show that various versions of the "halfplane property" in higher dimensions
of multivariate graph polynomials are also not semantic properties of the underlying graph in the sense of (*). 
This is interesting for two reasons:
First, these halfplane properties were studied in the recent literature on graph polynomials, and,
second, the proofs that the constructed $\bQ(G;\bX)$ is $\SOL$-definable is much more complex.
For the convenience of the logically minded reader we repeat
many examples already discussed in 
\cite{ar:MakowskyRavveBlanchard2014}. Furthermore, 
we  provide in this paper the  details in proving $\SOL$-definability 
for the more difficult case of multivariate graph polynomials
and the various halfplane properties.

\subsection{Outline of the paper}
In Section \ref{se:csl-compare} we discuss the foundational aspects of comparing graph polynomials.
In Section \ref{apal-represent} we discuss different ways of representing graph polynomials
and introduce the notion of semantic (graph theoretic) 
and syntactic (algebraic) properties of graph polynomials.
In Section \ref{se:dp} we present the discussion of 
various notions of equivalence of graph polynomials based on their distinctive power
which also is part of Section \ref{se:many}.
In Section \ref{se:sol} we develop the framework of $\SOL$-definable graph polynomials.
This summarizes the framework given in \cite{phd:Kotek}.
In Section \ref{se:zeros} we discuss the location of zeros of graph polynomials. 
First, in Subsection \ref{csl-roots}, we review our previous results
previously published in \cite{ar:MakowskyRavveBlanchard2014}, 
which show that the
location of roots of univariate graph polynomials is not a semantic property.
Then, in Subsection \ref{se:stable}, 
we look at the multivariate version of the location of roots, 
the various halfplane properties, also called stability properties, and prove that
stability is also not a semantic property of multivariate graph polynomials.
The discussion of stable polynomials is appears for the first time in this paper.
Finally, in Section \ref{se:conclu} we draw our conclusions and formulate several open problems.

\ifskip
\else
In Section \ref{se:csl-compare} we discuss the foundational aspects of comparing graph polynomials,
and introduce the notion of semantic properties of graph polynomials.
In Section \ref{se:csl-roots} we discuss properties of univariate graph polynomials related to the
location or multiplicity of their roots.
In Section \ref{se:main}  
we look at the multivariate version of the location of roots, 
the various halfplane properties, also called stability properties, and prove that
stability is also not a semantic property of multivariate graph polynomials.
In Section \ref{se:conclu} we draw our conclusions and formulate several open problems.
\fi

\section{How to compare graph polynomials?}
\label{se:csl-compare}
Once the graph theorists started to study several graph polynomials, 
the need of comparing them naturally arises.
We analyze two notions of equivalence which both occur implicitely in the literature in many papers.
Authors will argue that their graph polynomial is different from other graph polynomials.
They will argue that
\begin{itemize}
\item
Some other graph polynomial is a special case of the newly studied graph polynomial.
\item
The newly introduced graph polynomial is incomparable to previously studied graph polynomials.
\item
The newly introduced graph polynomial is the most general graph polynomial
withing a certain class of graph invariants.
\end{itemize}
By analyzing the literature we extracted two ways of comparison, d.p.- and s.d.p.-equivalence,
which encompass all other notions used in various papers.
This section discusses the basic properties of these notions.
From a model-theoretic point of view, d.p.-equivalence is the more natural notion.
However, most graph theoretic papers compare the behaviour of graph polynomials  only
on graphs with the same number of vertices, edges and connected
components, which is captured by s.d.p.-equivalence.

For $\cR \in \{ \R, \C, \Z\}$
we denote by $\mathfrak{GP}_{\mathcal{R},r}$ the set of graph polynomials in $r$ indeterminates 
with coefficients in $\mathcal{R}$, and let $\bX= (X_1, \ldots, X_r)$ be $r$ indeterminates.
Let $\bP(G)=\bP(G;\bX)$ and $\bQ(G)=\bQ(G;\bX)$ be two graph polynomials.

The following statements appear frequently in the literature with the {\em intended meaning}, but  
{\em without a general definition}:
\begin{enumerate}
\item
$\bQ(G)$ is {\em a substitution instance} of $\bP(G)$.
\item
$\bQ(G)$ and $\bP(G)$ are really the same, up to a {\em prefactor}.
For example the various versions of the Tutte polynomial are said to be the same {\em up to a prefactor},
\cite{ar:Sokal2005a}, and the same holds for the various versions of the matching 
polynomial, \cite{bk:LovaszPlummer86}.
\item
$\bQ(G)$ is {\em at least as expressive} than $\bP(G)$.
\item
The coefficients of $\bP(G)$ can be determined, or even computed, from the coefficients of $\bQ(G)$.
\end{enumerate}
Usually these statements are understood to be uniform in the graphs $G$,
but this uniformity can take various forms.
In \cite{ar:MakowskyRavveBlanchard2014} we have given these statements precise meanings,
and we have initiated the analysis of their relationship.
In this paper we elaborate our approach from \cite{ar:MakowskyRavveBlanchard2014} further
with the logic community in mind.

From a model theoretic point of view a graph property is a Boolean graph parameter.
A closed formula in a logical formalism, say a fragment of 
$\SOL$,
is a syntactic object. Its meaning is given by a graph property, i.e., a class of finite graphs
closed under isomorphism. Two formulas are considered {\em logically equivalent} if they define the
same property. In other words, two formulas are considered equivalent if they do not distinguish
between two graphs. As Boolean graph parameters have only two possible values,
two formulas are equivalent if,  considered as graph parameters, they define the same function.

Let $\cR$ be a possibly infinite ring. An $\cR$-valued graph parameter is a function  $f$
which maps a graph $G$ into an element $f(G) \in \cR$.
A graph polynomial $\bP(G)$ is a graph parameter which takes values in a polynomial ring.

Graph parameters are {\em coextensive} if they define the same function.
However, co-extensiveness seems to be too strong a property to compare graph parameters.
For instance defining the size of a graph $G$ by its order $n(G) = |V(G)|$,
or by  $n'(G)= 2\cdot |V(G)|$, gives two non-coextensive graph parameters which still have the
same information content in the following sense.
For two $\cR$-valued graph parameters $f$ and $g$, we say that $g$ is at least as distinctive
as $f$, if  for two graphs $G_1, G_2$ $g$ does not distinguish between $G_1$ and $G_2$, i.e.,
$g(G_1)=g(G_2)$, then also $f$ does not distinguish between $G_1$ and $G_2$, i.e., $f(G_1) = f(G_2)$.

Graph theorists often compare the distinctive power of graph parameters on graphs which are not trivially
distinguishable. Here trivially distinguishable refers to different order, size or number of components.

\subsection{Equivalence of graph polynomials}
Let $\bP(G)$ be a graph polynomial.
We say that
two graphs $G,H$ are {\em similar} if they have the same number of vertices, edges and connected components.
A graph parameter or a graph polynomial is a {\em similarity function} 
if it is invariant under graph similarity.

Two graphs $G,H$ are {\em $\bP$-equivalent} if $\bP(H;\bX) = \bP(G;\bX)$.
{\em $\bP$  distinguishes between $G$ and $H$} if $G$ and $H$ are not $\bP$-equivalent.
Two graph polynomials $\bP(G;\bX)$ and $\bQ(G;\bY)$ with $r$ and $s$ indeterminates respectively
can be compared by their {\em distinctive power} on similar graphs:
{\em $\bP(G; \bX)$ is at most as distinctive as $\bQ(G; \bY)$}, $\bP(G;\bX) \leq_{s.d.p} \bQ(G;\bY)$ 
if any two similar graphs $G,H$ which are
$\bQ$-equivalent are also $\bP$-equivalent.
{\em $\bP(G;\bX)$  and $\bQ(G; \bY)$ are s.d.p.-equivalent}, 
$\bP(G;\bX) \sim_{s.d.p} \bQ(G; \bY)$ if for any two similar graphs $G,H$
$\bP$-equivalence and
$\bQ$-equivalence coincide.
We can also compare graph polynomials on graphs without requiring similarity.
In this case we say that a graph polynomial $\bP$ is at most as distinctive as $\bQ$, $\bP \leq_{d.p.} \bQ$,
if for all graphs $G_1$ and $G_2$ we have that 
$$
\bQ(G_1; \bY) = \bQ(G_2; \bY) \mbox{   implies   }
\bP(G_1; \bX) = \bP(G_2; \bX)
$$
$\bP$ and $\bQ$ are d.p.-equivalent iff both
$\bP \leq_{d.p.} \bQ$ and
$\bQ \leq_{d.p.} \bP$.
D.p.-equivalence is stronger that s.d.p.-equivalence:

\begin{lemma}
\label{le:new}
For any two graph polynomials $\bP$ and $\bQ$ we have:
$\bP \leq_{d.p.} \bQ$ implies
$\bP \leq_{s.d.p.} \bQ$. 
\end{lemma}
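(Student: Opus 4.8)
The plan is to prove the implication essentially by unwinding the two definitions and observing that the s.d.p.\ condition is literally the d.p.\ condition with its universal quantifier restricted to a subclass of pairs. Recall that $\bP \leq_{d.p.} \bQ$ asserts the implication ``$\bQ(G_1) = \bQ(G_2)$ implies $\bP(G_1) = \bP(G_2)$'' for \emph{every} pair of graphs $G_1, G_2$, whereas $\bP \leq_{s.d.p.} \bQ$ asserts that any two \emph{similar} graphs which are $\bQ$-equivalent are also $\bP$-equivalent, i.e.\ the very same implication but only for pairs of similar graphs. Since every pair of similar graphs is in particular a pair of graphs, the quantification in the s.d.p.\ condition ranges over a subset of the pairs over which the d.p.\ condition ranges, so the latter is at least as strong.

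Concretely, assuming $\bP \leq_{d.p.} \bQ$, I would fix two arbitrary similar graphs $G, H$ and suppose they are $\bQ$-equivalent, that is, $\bQ(G;\bY) = \bQ(H;\bY)$. Instantiating the d.p.\ hypothesis at $G_1 := G$ and $G_2 := H$ immediately yields $\bP(G;\bX) = \bP(H;\bX)$, i.e.\ $G$ and $H$ are $\bP$-equivalent. As $G, H$ were arbitrary similar $\bQ$-equivalent graphs, this is exactly the statement $\bP \leq_{s.d.p.} \bQ$, completing the argument.

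I do not expect any genuine obstacle here: the whole content is the trivial observation that a universal statement over all pairs specializes to any designated subclass of pairs (here, the similar pairs). The only point requiring minor care is to keep the direction of the implication and the asymmetric roles of $\bP$ and $\bQ$ straight, since the relation ``at most as distinctive'' places the hypothesis on $\bQ$-equivalence and the conclusion on $\bP$-equivalence; swapping these would prove a different (and false) statement.
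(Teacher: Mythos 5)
Your proof is correct and is precisely the quantifier-restriction argument the paper has in mind: indeed, the paper states Lemma \ref{le:new} without any proof, treating it as immediate from the definitions for exactly the reason you give (the s.d.p.\ condition is the d.p.\ condition restricted to pairs of similar graphs). Your care about the asymmetric roles of $\bP$ and $\bQ$ is also the right point to flag, but there is nothing to add or correct.
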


A graph $G$ is $\bP$-unique if for all graphs $G'$ the polynomial identity
$\bP(G;\mathbf{X})=\bP(G';\mathbf{X})$ implies that
$G$ is isomorphic to $G'$.
As a graph invariant $\bP(G;\mathbf{X})$ can be used to check whether two graphs are not isomorphic.
For $P$-unique graphs $G$ and $G'$ the polynomial $\bP(G;\mathbf{X})$ can also be used to 
check whether they are isomorphic.

Our notion of similarity is extracted from the literature on graph polynomials:
It is implicitly used frequently both in claims that two polynomials are 
``really the same'', or ``the same up to a prefactor''.
From a logical point of view one would rather define a more general notion: Let $\Sigma$ be a finite
set of graph parameters. Two graphs $G,H$ are $\Sigma$-similar if they have the same values $s(G)=s(H)$ for
all $s \in \Sigma$.
It is easy, but currently of little use, to rewrite the definitions of various forms of equivalence
of graph polynomials using $\Sigma$-similarity
rather than similarity as we defined it in this paper.

\begin{theorem}
\label{th:dp}
\label{th:equiv}
\begin{enumerate}[(i)]
\item
$\bP$ is at most as distinctive as $\bQ$, $\bP \leq_{d.p} \bQ$, iff there is a function
$F: \Z[\bY] \rightarrow \Z[\bX]$ such that for every graph $G$ we have
$$
\bP(G;\bX) = F(\bQ(G;\bY))
$$
\item
$\bP$ is at most as distinctive as $\bQ$ on similar graphs, $\bP \leq_{s.d.p} \bQ$, iff there is a function
$F: \Z[\bY] \times \Z^3 \rightarrow \Z[\bX]$ such that for every graph $G$ we have
$$
\bP(G;\bX) = F(\bQ(G;\bY), n(G), m(G), k(G))
$$
\item
Furthermore, both for d.p. and s.d.p., if both $\bP$ and $\bQ$ are computable, then $F$ is computable, too.
\end{enumerate}
\end{theorem}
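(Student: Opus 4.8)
The plan is to prove the three parts separately, handling each biconditional by its two directions and then treating computability by exhibiting explicit algorithms.

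For the ``only if'' direction of (i), assuming $\bP \leq_{d.p.} \bQ$, I would define $F$ directly on the range of $\bQ$: for each polynomial $q$ in the range, choose any graph $G$ with $\bQ(G;\bY) = q$ and set $F(q) = \bP(G;\bX)$; on polynomials outside the range of $\bQ$ set $F$ to a fixed default such as $0$. The hypothesis $\bP \leq_{d.p.} \bQ$ is exactly what makes this well-defined, since two graphs with equal $\bQ$-value have equal $\bP$-value, so the choice of witness $G$ is irrelevant, and by construction $\bP(G) = F(\bQ(G))$ for every $G$. The ``if'' direction is immediate: given such an $F$, if $\bQ(G_1) = \bQ(G_2)$ then $\bP(G_1) = F(\bQ(G_1)) = F(\bQ(G_2)) = \bP(G_2)$.

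Part (ii) follows the same template with the triple $(n(G), m(G), k(G))$ carried along as extra arguments. For ``only if'', I would set $F(q, a, b, c) = \bP(G)$ whenever some graph $G$ realizes $\bQ(G) = q$ together with $(n,m,k)(G) = (a,b,c)$, and a default otherwise; well-definedness now invokes the s.d.p. hypothesis, since any two such witnesses are similar (equal $n, m, k$) and $\bQ$-equivalent, hence $\bP$-equivalent. The ``if'' direction again unwinds the definitions, using that similar, $\bQ$-equivalent graphs feed identical arguments into $F$.

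For part (iii) I would show the $F$ built above is computable by giving algorithms for it, and I expect the genuine obstacle to lie here, in guaranteeing termination. The s.d.p. case is the easy one: on input $(q, a, b, c)$ the value $a = n(G)$ bounds the search, since there are at most $2^{\binom{a}{2}}$ graphs on $a$ vertices, so the algorithm enumerates them all, computes $\bQ(G)$ (possible as $\bQ$ is Turing computable) and $n, m, k$ for each, and returns $\bP(G)$ for the first match or the default if none, always halting after finitely many steps. The d.p. case is where condition (ii) of Definition \ref{def:computable} becomes indispensable and is the crux of the argument: on input $q$ there is no a priori bound on the size of a witness graph, so a naive enumeration of all finite graphs would loop forever precisely when $q \notin \mathrm{range}(\bQ)$. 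I would therefore first test membership $q \in \mathrm{range}(\bQ)$ using the decidability of the range, returning the default when it fails, and only when $q$ is in the range run the enumeration, which is now guaranteed to terminate at some $G$ with $\bQ(G) = q$, whereupon the algorithm outputs $\bP(G)$. Since $\bP$ and $\bQ$ are Turing-computable functions, every step is effective, so $F$ is total computable; this is exactly the point foreshadowed by the remark that the second computability condition is needed to make the theorem work.
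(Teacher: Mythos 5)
Your proof is correct, and it follows the same basic strategy as the paper's: define $F$ on realized values by choosing a witness graph, invoke the distinctivity hypothesis for well-definedness, unwind definitions for the converse, and obtain computability by enumeration, with Definition \ref{def:computable}(ii) supplying termination. However, your write-up is tighter than the paper's own proof in two places, and the differences are substantive. First, in the forward direction of (ii) the paper defines $F$ on $\Z[\bY]$ alone, via the classes $\bQ^{-1}(s)$, and asserts that any two graphs in $\bQ^{-1}(s)$ are $\bP$-equivalent; under the s.d.p.\ hypothesis this step is unjustified, since two $\bQ$-equivalent graphs need not be similar. This is exactly what your $F(q,a,b,c)$ repairs: carrying the triple $(n(G),m(G),k(G))$ as extra arguments means well-definedness only requires $\bP$-equivalence of witnesses that are both $\bQ$-equivalent \emph{and} similar, which is precisely what $\bP \leq_{s.d.p.} \bQ$ gives. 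Second, for (iii) the paper gives a single argument (decide whether $s$ is in the range of $\bQ$, then search for a witness $G_s$ with $\bQ(G_s)=s$), which is adequate for the d.p.\ case but does not by itself settle the s.d.p.\ case: knowing $q$ lies in the range of $\bQ$ does not tell you whether $q$ is realized by a graph with the prescribed $(n,m,k)$, so an unbounded enumeration could diverge on inputs where no such graph exists. Your bounded search over the finitely many (labeled) graphs on $a = n(G)$ vertices handles this correctly, and as a bonus shows that range decidability is not even needed for the s.d.p.\ half, while your treatment of the d.p.\ half, where range decidability is genuinely indispensable, matches the paper's intent.
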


The equivalence in
(ii) in Theorem \ref{th:equiv} was  first proved in 
\cite{ar:MakowskyRavveBlanchard2014}. For the convenience of the reader we repeat it below.
Moreover, (iii) is new, and follows from our definition of computability of graph polynomials.
We note here that (ii) is useful for proving  d.p.-reducibility, whereas (i) is more
useful to prove its negation.
Theorem \ref{th:equiv} shows that our definition of d.p.-equivalence of graph polynomials
is mathematically equivalent to the definition proposed in
\cite{ar:MerinoNoble2009}.

\begin{proof}[Proof of Theorem \ref{th:equiv}(ii)-(iii)]
\ \\
(ii)
$\Rightarrow$:\\
Let $S$ be a set of finite graphs and
$s \in \Z[\bX]$. 
For a graph polynomial $\bP$  we define:
\begin{gather}
\bP[S] = \{ s \in \Z[\bX]: \bP(G)=s \mbox{ for some } G \in S\} \notag\\
\bP^{-1}(s)= \{G : \bP(G)=s\}. \notag
\end{gather}
Now assume $\bP(G;\bX) \preceq_{s.d.p.} \bQ(G;\bY)$. 
\\
If $\bQ^{-1}(s) \neq \emptyset$, then for every $G_1, G_2 \in \bQ^{-1}(s)$
we have 
$\bQ(G_1, \bY) = \bQ(G_2, \bY)$, and therefore
$\bP(G_1, \bX) = \bP(G_2, \bX)$. 
Hence $P[\bQ^{-1}(s)] = \{t_s\}$ for some $t_s \in \Z[\bX]$.
Now we define 
\begin{gather}
F_{\bP,\bQ}(s)=
\begin{cases}
t_s & \bQ^{-1}(s) \neq \emptyset\\
s & \mbox{else}
\end{cases}
\notag
\end{gather}

$\Leftarrow$: \\
Assume there is a function
$F: \Z[\bY] \rightarrow \Z[\bX]$ such that 
for all graphs $G$ we have 
$ F(\bQ(G)) = \bP(G)$.

Now let $G_1, G_2$ be similar graphs such that 
$\bQ(G_1) = \bQ(G_2)$.
Hence, $F(\bQ(G_1)) = F(\bQ(G_2))$.
Since for all $G$ we have $ F(\bQ(G)) = \bP(G)$,
we get $\bP(G_1)= \bP(G_2)$. 

Proof of (iii):
Now assume both 
$\bP(G;\bX)$ and
$\bQ(G;\bY)$ are computable.
To see that $F$ is computable we note that it suffices, as in the proof of (i),
to  find an element $s$ in the range of  $\bQ$  and a graph  $G_s$ such that $\bQ(G_s)=s$.
The latter can be done since the range of $\bQ$ is Turing decidable by Definition \ref{def:computable}(ii).
\end{proof}

\subsection{Examples of equivalent graph polynomials}
\label{se:equiv-ex}
\begin{example}
\label{ex:matching}
Let $m_k(G)$ denote the number of $k$-matchings ($k$ many independent edges) of $G$.
There are two versions of the univariate matching polynomial, \cite{bk:LovaszPlummer86}:
The {\em matching defect polynomial} (or {\em acyclic polynomial})
$$
\mu(G;X)= \sum_{k=0}^{\lfloor \frac{n}{2} \rfloor} (-1)^k m_k(G) X^{n-2k},
$$
and the
{\em matching generating polynomial}
$$
g(G;X)= \sum_{k=0}^n m_k(G) X^k.
$$
The relationship between the two is given by
\begin{gather}
\mu(G;X)=
\sum_{k=0}^{\lfloor \frac{n}{2} \rfloor}
(-1)^k m_k(G) X^{n-2k} =
X^{n}
\sum_{k=0}^{\lfloor \frac{n}{2} \rfloor}
(-1)^k m_k(G) X^{-2k} =
\notag
\\
=X^{n}
\sum_{k=0}^{\lfloor \frac{n}{2} \rfloor}
 m_k(G) ((-1)\cdot X^{-2})^{k}
=
X^{n}
\sum_{k=0}^{\lfloor \frac{n}{2} \rfloor}
m_k(G) (-X^{-2})^{k} =
X^ng(G;(-X^{-2}))
\notag
\end{gather}
It follows that $g$ and $\mu$ are equally distinctive, 
and can be computed  from each other by a simple substitution and multiplication
of a factor which depends only on the number of vertices, edges and connected components.
However, assume $G$ has no isolated vertices and $E_n$ is the graph without edges on $n$ vertices
with $E_0 = \emptyset$. Then $g(G;X) = g(G \sqcup E_n;X)$ for all $n \in \N$, i.e.,
$g(G;X)$ is invariant under addition or removal of isolated vertices.
However, this is not true for $\mu(G;X)$, which depends on  the number of isolated vertices.
\end{example}

\begin{example}
\label{ex:coefficients-1}
Let $\bP(G;X)$ be a univariate graph polynomial with integer coefficients and
\begin{gather}
\bP(G;X) = \sum_{i=0}^{d(G)} a_i(G) X^i = 
\notag \\
= \sum_{i=0}^{d(G)} b_i(G) X_{(i)} =
\notag \\
=\sum_{i=0}^{d(G)} c_i(G) {X \choose i} =
\notag \\
= \prod_i^{d(G)} (X - z_i)
\notag
\end{gather}
where $X_{(i)} = X(X-1)\cdot \ldots \cdot (X-i+1)$ is the falling factorial function.
We denote by  $a\bP(G)=(a_0(G), a_1(G), \ldots a_{d(G)})$,
$b\bP(G) =(b_0(G), b_1(G), \ldots b_{d(G)})$ 
and
$c\bP(G) =(c_0(G), c_1(G), \ldots c_{d(G)})$ 
the coefficients of these polynomial presentations
and by $z\bP(G) = (z_1, \ldots , z_{d(G)})$ the roots of these polynomials with their multiplicities.
We note that the four presentations of $\bP(G;X)$
are all d.p.-equivalent.
\end{example}

\begin{example}
\label{ex:spectral-1}
Let $G=(V(G), E(G))$ be a loopless graph without multiple edges.
Let $A_G$ be the adjacency matrix of $G$, $D_G$ the diagonal matrix with $(D_G)_{i,i} =d(i)$, the degree of the vertex $i$,
and $L_G = D_G -A_G$.
In spectral graph theory two graph polynomials are considered, the {\em characteristic polynomial of $G$},
here denoted by $P_A(G;X) = \det (X\cdot \mathbb{I}- A_G)$, and the {\em Laplacian polynomial}, 
here denoted by $P_L(G;X) = \det (X\cdot \mathbb{I}- L_G)$. Here $\mathbb{I}$ denotes the unit element in
the corresponding matrix ring.
Here we show that the polynomials $P_{A}(G;X)$ and $P_L(G;X)$ 
are d.p.-incomparable.
$G$ and $H$ in Figure \ref{fig1}  are similar.
We have 
$$P_{A}(G;X)=P_{A}(H;X)= (X-1)(X+1)^2(X^3 -X^2 -5X +1),$$ but $G$ has eight spanning trees, and $H$ has six.
Therefore, $P_L(G;X) \neq P_L(H;X)$, as one can compute the number of spanning trees from $P_L(G;X)$.
For more details, cf. \cite[Exercise 1.9]{bk:BrouwerHaemers2012}.

\begin{figure}[h]
\begin{center}
\includegraphics[width=.15\linewidth]{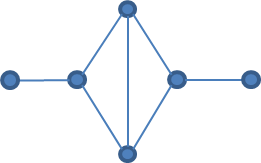} 
\hspace{2cm}
\includegraphics[width=.07\linewidth]{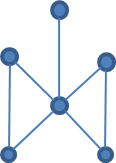} 
\\
$G$ \hspace{4cm} $H$
\end{center}
\caption{Similar graphs with different number of spanning trees}
\label{fig1}
\end{figure}

On the other hand, $G'$ and $H'$ in Figure \ref{fig2} are similar, but
$G'$ is not bipartite, whereas, $H'$ is.
\begin{figure}[h]
\begin{center}
\includegraphics[width=.15\linewidth]{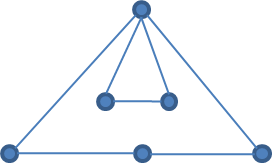} 
\hspace{2cm}
\includegraphics[width=.15\linewidth]{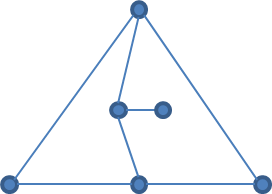} 
\\
$G'$ \hspace{4cm} $H'$
\caption{Similar graphs with different chromatic numbers}
\label{fig2}
\end{center}
\end{figure}
%
%
%
%
%
Hence $P_A(H';X) \neq P_A(G',X)$, but
$P_L(H';X)=P_L(G';X)$.
See, \cite[Lemma 14.4.3]{bk:BrouwerHaemers2012}.

{\bf Conclusion:} The characteristic polynomial and the Laplacian polynomial
are d.p.-incomparable.
However, if restricted to $r$-regular graphs, they are d.p.-eqivalent, \cite{bk:BrouwerHaemers2012}.
\end{example}

\subsection{Prefactor equivalence}
We recall that a graph parameter $f(G)$ with values in some function space $\bF$ over some ring $\cR$ 
is called a 
{\em similarity function} if for any two similar graphs $G,H$ we have that $f(G)=f(H)$.
If $\bF$ is  a subset of the set of analytic functions we speak of
{\em analytic similarity functions}.

If $\bF$ is the polynomial ring $\Z[\bX]$ with
set of indeterminates $\bX= (X_1, \ldots X_r)$,
we speak of {\em  similarity polynomials}.
It will be sometimes useful to allow classes of functions spaces 
which are closed under
{\em reciprocals} and {\em inverses} rather
than just similarity polynomials.  

\begin{example}
Typical examples of similarity functions
are 
\begin{enumerate}
\item
The nullity $\nu(G) = m(G)- n(G) +k(G)$ and the rank $\rho(G) =n(G)- k(G)$ of a graph $G$
are similarity polynomials with integer coefficients.
\item
Similarity polynomials can be formed inductively starting with
similarity functions $f(G)$
not involving indeterminates, and
monomials of the form $X^{g(G)}$, where $X$ is an indeterminate and 
$g(G)$ is a 
similarity function
not involving indeterminates.
One then closes under pointwise addition, subtraction, multiplication and substitution of
indeterminates $X$ by similarity polynomials.
\item
$f(G; X) = n(G)X^2$ is a similarity polynomial with integer coefficients.
Its inverse $f^{-1}(G; X) = n(G)^{-1} X^{\frac{1}{2}}$ is analytic at any point $a\in \R$ with $a \neq 0$.
Its reciprocal $\frac{1}{f(G;X)}$ is rational.
\end{enumerate}
\end{example}

In the literature one often wants to say that two graph polynomials are {\em almost the same}.
We propose a definition which makes this precise.

%
\begin{definition}
Let 
$\bP(G; Y_1, \ldots, Y_r)$  and
$\bQ(G; X_1, \ldots X_s)$ 
be two multivariate graph polynomials with coefficients in a ring $\cR$.
\begin{enumerate}
\item
We say that $\bP(G;\bY)$ is {\em prefactor reducible to $\bQ(G;\bX)$ 
over a set of similarity functions $\bF$},
and we write 
$$
\bP(G;\bY) \preceq_{prefactor}^{\bF} \bQ(G;\bX)
$$ 
if there are similarity functions $f(G;\bY)$ and $g_i(G; \bX), i \leq r$ in $\bF$
such that
$$
\bP(G;\bY) = f(G;\bY) \cdot \bQ(G; g_1(G; \bY), \ldots, g_r(G;\bY))
$$
\item
We say that $\bP(G;\bY)$ is {\em substitution reducible to} $\bQ(G; \bX)$ over $\bF$
and we write 
$$
\bP(G;\bY) \preceq_{subst} \bQ(G;\bX)
$$ 
if $f(G;\bX)=1$ is the constant function for all graphs $G$.
\item
We say that
$\bP(G;\bY)$  and
$\bQ(G;\bX)$  are {\em  prefactor equivalent},
and we write
$$
\bP(G;\bY) \sim_{prefactor} \bQ(G; \bX)
$$ 
if the relation holds in  both directions.
\item
Substitution equivalence
$ \bP(G; \bY) \sim_{subst} \bQ(G;\bX)$ 
is defined analogously.
\end{enumerate}
\end{definition}

The following properties follow from the definitions.
\begin{proposition}
Assume we have two graph polynomials
$\bP(G; \bY)$  and
$\bQ(G; \bX)$. 
For reducibilities we have:
\begin{enumerate}
\item
$ \bP(G; \bY) \preceq_{subst} \bQ(G; \bX) $ implies
$ \bP(G; \bY) \preceq_{prefactor} \bQ(G; \bX) $.
\item
$ \bP(G; \bY) \preceq_{prefactor} \bQ(G; \bX) $ implies
$ \bP(G; \bY) \preceq_{s.d.p.} \bQ(G; \bX) $.
\end{enumerate}
The corresponding implications for equivalence obviously also hold.
\end{proposition}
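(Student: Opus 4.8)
The plan is to treat the two reducibility implications separately and then read off the equivalence statements by applying them in both directions.

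For part (1) I would simply observe that substitution reducibility is, by definition, the special case of prefactor reducibility in which the prefactor is identically $1$. If $\bP(G;\bY) \preceq_{subst} \bQ(G;\bX)$, then there are similarity functions $g_i(G;\bY)$ with $\bP(G;\bY) = \bQ(G; g_1(G;\bY), \ldots, g_r(G;\bY))$, and rewriting this as $\bP(G;\bY) = 1 \cdot \bQ(G; g_1(G;\bY), \ldots, g_r(G;\bY))$ exhibits the constant function $f \equiv 1$ as the required prefactor. Since a constant function is invariant under similarity and hence belongs to any reasonable class $\bF$ of similarity functions, the implication is immediate; the only thing worth recording is that $1 \in \bF$.

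For part (2) I would assume the prefactor relation $\bP(G;\bY) = f(G;\bY) \cdot \bQ(G; g_1(G;\bY), \ldots, g_r(G;\bY))$ with $f$ and the $g_i$ similarity functions, and then verify the defining condition of $\bP \leq_{s.d.p.} \bQ$ directly. Take two similar graphs $G, H$ that are $\bQ$-equivalent, meaning $\bQ(G;\bX) = \bQ(H;\bX)$ as polynomials in $\bX$, and aim to show $\bP(G;\bY) = \bP(H;\bY)$. The crucial point is that, because $G$ and $H$ are similar and $f, g_1, \ldots, g_r$ are similarity functions, these functions take equal values on the two graphs: $f(G;\bY) = f(H;\bY)$ and $g_i(G;\bY) = g_i(H;\bY)$ for every $i$. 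Evaluating the single common polynomial $\bQ(G;\bX) = \bQ(H;\bX)$ at the common arguments $g_i(G;\bY) = g_i(H;\bY)$ therefore yields the same element, and multiplying by the common prefactor gives $\bP(G;\bY) = \bP(H;\bY)$, as required.

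I do not expect a genuine obstacle; the content is entirely in unwinding the definitions, and the real work was already done in fixing the notion of similarity function. The one place needing a moment of care is the substitution step: one must note that substituting identical arguments into the identical polynomials $\bQ(G;\cdot)$ and $\bQ(H;\cdot)$ produces identical outputs, and that this substitution is well-defined in the fixed function space $\bF$ (which is precisely why $\bF$ is assumed closed under the relevant operations). Finally, the equivalence implications follow formally: prefactor equivalence is the conjunction $\bP \preceq_{prefactor} \bQ$ and $\bQ \preceq_{prefactor} \bP$, so applying part (2) in each direction yields $\bP \leq_{s.d.p.} \bQ$ and $\bQ \leq_{s.d.p.} \bP$, hence $\bP \sim_{s.d.p.} \bQ$; likewise substitution equivalence implies prefactor equivalence by part (1) in both directions, and therefore s.d.p.-equivalence.
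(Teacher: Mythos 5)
Your proof is correct and follows exactly the route the paper intends: the paper gives no explicit argument, stating only that the proposition ``follows from the definitions,'' and your unwinding --- part (1) as the special case $f \equiv 1$ of the prefactor definition, and part (2) by using that similarity functions agree on similar graphs so that the common polynomial $\bQ$ evaluated at common arguments with a common prefactor yields $\bP(G;\bY)=\bP(H;\bY)$ --- is precisely the argument being left to the reader. Nothing is missing; the equivalence statements indeed follow formally by applying the reducibility implications in both directions.
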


\subsection{The classical examples}
\begin{example}[The universal Tutte polynomial]
Let $T(G;X,Y)$ be the Tutte polynomial, \cite[Chapter 10]{bk:Bollobas98}.
The universal Tutte polynomial is defined by
$$
U(G;X,Y,U,V,W) = U^{k(G)} \cdot V^{\nu(G)} \cdot W^{\rho(G)} \cdot T\left(G; \frac{UX}{W}, \frac{Y}{U}\right).
$$
$U(G;X,Y,U,V,W)$ is the most general graph polynomial satisfying the recurrence relations
of the Tutte polynomial in the sense that every other graph polynomial satisfying these recurrence
relations is a substitution instance of $U(G;X,Y,U,V,W)$.

Here, $\nu(G) = m(G) -n(G) + k(G)$ is the nullity of $G$, and $\rho(G) = n(G) -k(G)$ is the rank of $G$.
Clearly, $U(G;X,Y,U,V,W)$ is 
prefactor equivalent to $T(G;X,Y)$ using rational similarity functions.
\end{example}

\begin{example}[The matching polynomials]
In Example \ref{ex:matching} we have already seen two
versions of the matching polynomials:
\begin{gather}
\mu(G;X) = \sum_{i=0}^{\lfloor \frac{n}{2} \rfloor} (-1)^i m_i(G) X^{n(G)- 2i}
\notag \\
g(G;Y) = \sum_{i=0}^{\lfloor \frac{n}{2} \rfloor} m_i(G) Y^i
\notag 
\end{gather}
The original definition in \cite{ar:HeilmannLieb72} is a bivariate version of the matching polynomial:
\begin{gather}
M(G; X,Y) = \sum_{i=0}^{\lfloor \frac{n}{2} \rfloor} m_i(G) X^i Y^{n(G)- 2i}
\notag
\end{gather}
We have
$ \mu(G;X) = X^{n(G)} \cdot g(G; -X^{-2}) $
and 
$M(G;X,Y) = Y^{n(G)} \cdot g(G; \frac{X}{Y^2})$.
Clearly, all three matching polynomials are mutually 
prefactor bi-reducible using analytic similarity functions.
\end{example}

\begin{example}
The following graph polynomials are d.p.-equivalent but incomparable by prefactor reducibility:
\begin{enumerate}
\item
$M(G;X)$ and $M(G;X)^2$;
\item
$\mu(G;X)$ and $\sum_i m_i(G){{X}\choose{i}}$.
\end{enumerate}
\end{example}
In the literature there are at least two theorems which state that
two graph polynomials have the same coefficients if restricted to some
graph class $\mathcal{K}$.

\begin{theorem}[C.D Godsil, I. Gutman, \cite{ar:GodsilGutman81}]
\label{th:GodsilGutman}
\label{th:chebyshev}
Let
$\mu(G;X)$  be
the defect matching polynomial  and
$P_{A}(G;X)$
the characteristic polynomial. 
Let $\mathcal{F}$ be the  
class of forests. 
Then for every graph in $\mathcal{F}$
we have that $\mu(G;X)$ is d.p.-equivalent to $P_{A}(G;X)$ and even stronger, that
$$
\mu(G;X) =
P_{A}(G;X).
$$
\end{theorem}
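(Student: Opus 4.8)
The plan is to expand the characteristic polynomial via the Leibniz (permutation) formula for the determinant and then identify, by a cycle analysis of the permutations, exactly which terms survive once the graph is assumed to be acyclic. Writing $n = n(G)$ and letting $A_G = (a_{ij})$ be the adjacency matrix, the entries of $X\mathbb{I} - A_G$ are $X$ on the diagonal (since $G$ is loopless, $a_{ii}=0$) and $-a_{ij}$ off the diagonal, so that
$$
P_{A}(G;X) = \det(X\mathbb{I} - A_G) = \sum_{\sigma \in S_n} \mathrm{sgn}(\sigma) \prod_{i=1}^{n} (X\mathbb{I} - A_G)_{i,\sigma(i)}.
$$

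First I would decompose each permutation $\sigma$ into disjoint cycles. A fixed point contributes a factor $X$, while a nontrivial cycle $(i_1\, i_2\, \cdots\, i_\ell)$ contributes $\prod_{t} (-a_{i_t i_{t+1}})$ (indices mod $\ell$), which is nonzero only if $\{i_t, i_{t+1}\} \in E(G)$ for every $t$, i.e. only if $i_1, \ldots, i_\ell$ trace a cycle of length $\ell$ in $G$. The key step, and the only place the hypothesis enters, is this: a forest contains no cycle of length $\ell \geq 3$, so the only cycles of $\sigma$ that can yield a nonzero term are fixed points and transpositions $(i\, j)$ with $\{i,j\} \in E(G)$. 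Hence a contributing permutation is precisely a set of $k$ pairwise disjoint edges, that is a $k$-matching, together with $n - 2k$ fixed points.

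It then remains to read off the sign and the monomial. For such a $\sigma$ the $n-2k$ fixed points give $X^{n-2k}$, and each transposition $(i\,j)$ contributes $(-a_{ij})(-a_{ji}) = a_{ij}^2 = 1$; since a product of $k$ disjoint transpositions has sign $(-1)^k$, the whole term equals $(-1)^k X^{n-2k}$. As each $k$-matching corresponds to exactly one such permutation, summing over all $k$-matchings for every $k$ gives
$$
P_{A}(G;X) = \sum_{k=0}^{\lfloor n/2 \rfloor} (-1)^k m_k(G) X^{n-2k} = \mu(G;X),
$$
which is the claimed identity; d.p.-equivalence is then immediate since the two polynomials are literally equal. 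The one genuinely substantive point is the cycle-counting observation that acyclicity of $G$ annihilates every permutation having a cycle of length at least three, so that only matchings survive --- everything else is bookkeeping of signs through the cycle-type formula $\mathrm{sgn}(\sigma) = \prod_j (-1)^{\ell_j - 1}$.
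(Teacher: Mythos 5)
Your proof is correct. Note, however, that the paper itself gives no proof of this statement: it quotes the identity $\mu(G;X)=P_{A}(G;X)$ on forests as a classical result of Godsil and Gutman, with only a citation to \cite{ar:GodsilGutman81}, so there is no internal argument to compare yours against. What you have written is essentially the standard proof from the algebraic graph theory literature, and every step checks out: the Leibniz expansion of $\det(X\mathbb{I}-A_G)$, the cycle decomposition in which a permutation cycle $(i_1\,\cdots\,i_\ell)$ with $\ell\ge 3$ contributes a nonzero term only when $i_1,\ldots,i_\ell$ trace a graph cycle of length $\ell$ (impossible in a forest), the observation that a transposition $(i\,j)$ survives exactly when $\{i,j\}\in E(G)$ and contributes $(-a_{ij})(-a_{ji})=1$, and the sign bookkeeping $\mathrm{sgn}(\sigma)=(-1)^k$ for a product of $k$ disjoint transpositions, giving the term $(-1)^k X^{n-2k}$ per $k$-matching. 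The bijection between surviving permutations and matchings is also stated correctly. One small remark worth making explicit if you write this up: the identity of the two polynomials on forests immediately yields the d.p.-equivalence claim only in the restricted sense the paper intends (distinctive power on the class $\mathcal{F}$), which is exactly what the theorem asserts; your closing sentence already captures this.
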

Now let
$$M(G; X,Y) = \sum_i m_i(G) X^i Y^{n(G)- 2i}$$
be the bivariate matching polynomial, and
$$M'(G; X) = \sum_i m_i(G) X^i X^{n(G)- 2i}= \sum_i m_i(G) X^{n(G)- i}$$
its substitution instance for $Y=X$.
Furthermore let 
$$\hat{\chi}(G;X) = \chi(\bar{G};X)$$ 
be the chromatic polynomial of the complement graph of $G$.
\begin{remark}
\begin{enumerate}
\item
$M(G; X,Y)$ is d.p.-equivalent to $M'(G; X)$ using a simple substitution.
\item
$\chi(G;X)$ and $\hat{\chi}(G;X)$ are d.p.-incomparable.
To see this, we note that for any number $m \in \N$ the polynomial $\chi(G;X)$ evaluated at $X=m$ 
does not distinguish cliques of size
bigger than $m$, whereas $\hat{\chi}(G;X)$ evaluated at $X=m$ does distinguish between them.
\end{enumerate}
\end{remark}

\begin{theorem}[E.J. Farrell, E.G.  Whitehead, \cite{ar:FarrellWhitehead1992}]
\label{th:FarrellWhitehead}
Let  $\Delta$ be the 
class of triangle-free graphs.
Then for each $G \in \Delta$ we have
that $M'(G;X)$ is d.p.-equivalent to $\hat{\chi}(G;X)$ and even stronger, that
$$
M'(G;X) = \hat{\chi}(G;X).
$$
\end{theorem}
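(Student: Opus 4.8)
The plan is to compute $\hat{\chi}(G;X)=\chi(\bar{G};X)$ directly in the falling-factorial basis $X_{(k)}=X(X-1)\cdots(X-k+1)$ of Example~\ref{ex:coefficients-1}, and then to recognize the matching numbers $m_i(G)$ as the resulting coefficients. First I would recall the standard set-partition expansion of a chromatic polynomial: for any graph $H$ on $n$ vertices,
$$
\chi(H;X)=\sum_{k=1}^{n} s_k(H)\, X_{(k)},
$$
where $s_k(H)$ is the number of partitions of $V(H)$ into exactly $k$ non-empty independent sets of $H$. This holds because a proper colouring with $X$ available colours is the same datum as a partition of $V(H)$ into some number $k$ of non-empty colour classes (each necessarily independent) together with an injection of these $k$ classes into the colour set, the latter being chosen in $X_{(k)}$ ways; summing over all such partitions gives the formula. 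Applying this to $H=\bar{G}$ and using that a set is independent in $\bar{G}$ precisely when it is a clique in $G$, I get that $s_k(\bar{G})$ counts the partitions of $V(G)$ into exactly $k$ non-empty cliques of $G$.

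The key step is to evaluate $s_k(\bar{G})$ using the hypothesis $G\in\Delta$. Since $G$ is triangle-free, every clique of $G$ has at most two vertices, i.e.\ is a single vertex or an edge. Hence a partition of $V(G)$ into cliques consists of a set of pairwise disjoint edges---a matching $M$ of $G$---together with the remaining vertices taken as singletons. If $|M|=j$, this partition has $j+(n-2j)=n-j$ blocks, and conversely every matching of size $j$ arises this way from a unique clique-partition. Therefore the clique-partitions into exactly $k$ blocks correspond bijectively to the matchings of size $n-k$, giving $s_k(\bar{G})=m_{n-k}(G)$. Re-indexing by $i=n-k$ then yields
$$
\hat{\chi}(G;X)=\chi(\bar{G};X)=\sum_{i} m_i(G)\, X_{(n-i)}.
$$

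Finally I would read off the conclusion. The right-hand side is exactly the matching polynomial $M'(G;X)=\sum_i m_i(G)X^{n-i}$ written in the falling-factorial basis in place of the monomial basis; by the identification of these presentations in Example~\ref{ex:coefficients-1}, where the monomial and falling-factorial forms of a polynomial are d.p.-equivalent and carry the same coefficient sequence, $M'(G;X)$ and $\hat{\chi}(G;X)$ coincide, which is the assertion. The main obstacle is the key step: triangle-freeness must be invoked in exactly the right place to collapse ``clique-partition'' to ``matching plus singletons'' and to pin down the block-count $n-j$, since this is precisely what converts the partition numbers $s_k(\bar{G})$ into the matching numbers $m_{n-k}(G)$. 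I would also flag the basis bookkeeping: the literal identity lives in the falling-factorial representation of $\hat{\chi}$, so the ``even stronger'' equality is the coincidence of coefficient sequences supplied by Example~\ref{ex:coefficients-1}, not an equality of the two monomial expansions.
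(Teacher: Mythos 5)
Your proof is correct, and it is worth pointing out that the paper contains no argument for Theorem~\ref{th:FarrellWhitehead} at all: the result is only quoted from \cite{ar:FarrellWhitehead1992}, so your derivation supplies a proof rather than paralleling one. The combinatorial core is sound and is the natural route (presumably also that of the cited source): the partition expansion $\chi(\bar{G};X)=\sum_k s_k(\bar{G})\,X_{(k)}$, the translation of independent-set partitions of $\bar{G}$ into clique partitions of $G$, and the use of triangle-freeness to collapse clique partitions to matchings plus singletons, with the block count $n-j$ giving the bijection $s_k(\bar{G})=m_{n-k}(G)$. Your closing caveat about basis bookkeeping is not pedantry but an essential correction to the statement as printed: taken literally, $M'(G;X)=\sum_i m_i(G)X^{n-i}$ and $\hat{\chi}(G;X)=\chi(\bar{G};X)$ are \emph{not} equal as polynomials. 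Already for $G=K_2$ one has $M'(K_2;X)=X^2+X$ while $\hat{\chi}(K_2;X)=\chi(E_2;X)=X^2$, whereas your identity gives $m_0X_{(2)}+m_1X_{(1)}=X(X-1)+X=X^2$, as it should. So the ``even stronger'' equality in the theorem is true only in the sense you state: the coefficient sequence of $\hat{\chi}(G;X)$ in the falling-factorial basis coincides with that of $M'(G;X)$ in the monomial basis, which is consistent with the paper's own remark following the theorem that such equalities reflect ``the particular presentation of the graph polynomials'' rather than properties of the graphs. The d.p.-equivalence claim on $\Delta$ then follows as you indicate: since the degree of either polynomial recovers $n(G)$, each of $M'(G;X)$ and $\hat{\chi}(G;X)$ determines the other via the graph-independent basis change $X^k\leftrightarrow X_{(k)}$, which is exactly the kind of function $F$ required by Theorem~\ref{th:equiv}(i); for this last step the cleanest reference is Theorem~\ref{th:dp} together with the basis-change discussion of Example~\ref{ex:coefficients-1}, rather than Example~\ref{ex:coefficients-1} alone.
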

In both Theorems \ref{th:GodsilGutman} and \ref{th:FarrellWhitehead}  the equality of the polynomials 
says something about the particular presentation of the graph polynomials but not about properties
of the graphs.

\section{How to represent graph polynomials?}
\label{apal-represent}

\subsection{Choosing a basis in the polynomial ring}

\begin{example}
\label{ex:coefficients}
Let $d(G)$ be a graph parameter, and
let $\bP(G;X)$ be a univariate graph polynomial with integer coefficients.
\begin{enumerate}[(i)]
\item
Assume
\begin{gather}
\bP(G;X) = \sum_{i=0}^{d(G)} a_i(G) X^i = \sum_{i=0}^{d(G)} b_i(G) X_{(i)} =
\notag \\
= \sum_{i=0}^{d(G)} c_i(G) X^{(i)} = \sum_{i=0}^{d(G)} e_i(G) {X \choose i} =
\prod_i^{d(G)} (X - z_i(G)),
\notag
\end{gather}
where $$X_{(i)} = X(X-1)\cdot \ldots \cdot (X-i+1)$$ is the falling factorial function, 
$$X^{(i)} = X(X+1)\cdot \ldots \cdot (X+i)$$ is the rising factorial function,
and $z_i$ are its roots.
Clearly, these are different presentations of the same polynomial, hence they are all d.p.-equivalent.
\item
Now look at the polynomials below, where the coefficients remain the same, but the
polynomial basis is changed:
\begin{gather}
\bP(G;X) = P_0(G;X)= \sum_{i=0}^{d(G)} a_i(G) X^i  = \prod_i^{d(G)} (X - z_i(G)) \\
P_1(G;X)
= \sum_{i=0}^{d(G)} a_i(G) X_{(i)}\\ 
P_2(G;X)
= \sum_{i=0}^{d(G)} a_i(G) X^{(i)} \\
P_3(G;X)
= \sum_{i=0}^{d(G)} a_i(G) {X \choose i}
\end{gather}
Obviously, $P_i(G;X)$ are different polynomials which have different roots, but
by Theorem \ref{th:dp} they are all d.p.-equivalent.
\end{enumerate}
\end{example}

Example \ref{ex:coefficients} shows that the location of the roots of a graph polynomial
is not invariant under d.p.-equvalence.

The notion of d.p.-equivalence (having the same distinguishing power) 
of graph polynomials evolved very slowly, mostly in implicit arguments.
Originally, a graph polynomial such as the chromatic or characteristic polynomial
had a {\em unique} definition which both determined its algebraic presentation and its semantic content.
The need to spell out semantic equivalence emerged when the various forms of the 
Tutte polynomial had to be compared.
As it was to be expected, some of the presentations of the Tutte polynomial 
had more convenient properties than others,
and some of the properties of one form got completely lost when passing to another semantically equivalent form.

Two 
d.p.-equivalent
polynomials carry the same combinatorial information about the underlying graph,
independently of their presentation as polynomials. This situation is analogous to the
situation in Linear Algebra: Similar matrices represent the same linear operator under two different bases.
The choice of a suitable basis, however, may be useful for numeric evaluations.
Here d.p.-equivalent graph polynomials represent the same combinatorial information under two different
polynomial representations. The choice of a particular polynomial representation $\bP(G;\bX)$ may carry
more numeric information about a particular graph parameter $p(G)$ determined by $\bP(G;\bX)$.

\subsection{Typical forms of graph polynomials}
\label{se:gp}
In this subsection we look at six types of graph polynomials: generalized chromatic polynomials
and polynomials defined as generating functions of induced or spanning subgraphs, determinant polynomials,
and 
graph polynomials arising from generating functions of relations.
In Section \ref{se:many} we show that they are truly of different form and use this in order to justify
our choice of Second Order Logic $\SOL$ as a suitable formalism for definability of graph polynomials.

More precisely,
let $\cC$ be a graph property. 
\begin{description}
\item[Generalized chromatic:]
Let $\chi_{\cC}(G;k)$ denote the number of vertex colorings of $G$
with at most $k$ colors such that each color class induces a graph in $\cC$.
If we count instead of vertex colorings edge colorings then the color class consists of sets of edges,
which induce a spanning subgraph in $\cC$.
It was shown in \cite{ar:KotekMakowskyZilber08,ar:KotekMakowskyZilber11} that 
$\chi_{\cC}(G;k)$ is a polynomial in $k$
for any graph property $\cC$ both for vertex and edge colorings.
Polynomials of this form were introduced first in \cite{pr:Harary85}, and will be referred to in this paper
as Harary polynomials.

A further generalization of chromatic polynomials was introduced 
in \cite{ar:MakowskyZilber2006,ar:KotekMakowskyZilber11}. 
The definition is model theoretic and too complicated to be given here.
A typical example would be counting the number of rainbow colorings with at most $k$ colors,
which  are edge colorings such that any two vertices are connected by at least one path with all its edges
receiving different colors. We shall see in Section \ref{se:zilber} that there are such colorings which are not
Harary colorings.
Generalized chromatic polynomials are further studied in \cite{ar:GHKMN-2016}.

\item[Generating functions:]
Let $A \subseteq V(G)$ and $B \subseteq E(G)$. We denote by $G[A]$ the induced subgraph of $G$
with vertices in $A$, and by $G<B>$ the spanning subgraph of $G$ with edges in $B$.
\begin{enumerate}[(i)]
\item
Let $\cC$ be a graph property. Define
$$
P_{\cC}^{ind}(G;X)= \sum_{A \subseteq V: G[A] \in \cC} X^{|A|}.
$$
\item
Let $\cD$ be a graph property which is closed under adding isolated vertices, i.e.,
if $G \in \cD$ then $G \sqcup K_1 \in \cD$. Define
$$
P_{\cD}^{span}(G;X)= \sum_{B \subseteq E: G<B> \in \cD} X^{|B|}.
$$
\end{enumerate}
\item[Generalized Generating functions:]
Let $X_i: i \leq r$ be indeterminates and $f_i: i \leq r$ be graph parameters.
We also consider graph polynomials of the form
$$
P_{\cC, f_1, \ldots, f_r}^{ind}(G;X)= \sum_{A \subseteq V: G[A] \in \cC} \prod_{i=1}^r X_i^{f_i(G[A])}
$$
and
$$
P_{\cC, f_1, \ldots, f_r}^{span}(G;X)= \sum_{B \subseteq E: G<B> \in \cD} \prod_{i=1}^r X_i^{f_i(G<B>)}.
$$
\item[Determinants:]
Let $M_G$ be a matrix associated with a graph $G$, such as the adjacency matrix, the Laplacian, etc.
Then we can form the polynomial $\det(\mathbf{1} \cdot X - M_G)$, where $\mathbf{1}$ is the unit matrix
of the same size as the order of $G$.
\end{description}
Special cases are 
the chromatic polynomial $\chi(G;X)$, 
the independence  polynomial $I(G;X)$, 
the Tutte
polynomial $T(G;X,Y)$ and
the characteristic polynomial of a graph $p_{char}(G;X)$.
Note that, in the sense of the following subsection, 
$\chi(G;X)$, $I(G;X)$ and $p_{char}(G;X)$ are mutually d.p.-incomparable, and $\chi(G;X)$
has strictly less distinctive power than $T(G;X,Y)$.

In Section \ref{se:relation} we shall see that there are graph polynomials defined in the
literature which seemingly do not fit the above frameworks. 
This is the case for the usual definition of
the {\em generating matching polynomial}: 
$$
\sum_{M \subseteq E(G): \mbox{match}(M)} X^{|M|},
$$
where $\mbox{match}(M)$ says that $(V(G),M)$ is a matching, i.e., $M$ is a set of isolated edges in $G$.
However, we shall see in Section \ref{se:relation} that there is another definition of the same polynomial
which is a generating function. In stark contrast to this, we shall prove there,
that the dominating polynomial 
$$
DOM(G;X)  = \sum_{A \subseteq V(G): \Phi_{dom}(A)} X^{|A|},
$$ 
where $\Phi_{dom}(A)$ says that $A$ is a dominating set of $G$,
cannot be written as a generating function, (Theorem \ref{th:dominating}).
This motivates the next definition, see also Section \ref{se:relation}.

\begin{description}
\item[Generating functions of a relation]
Let $\Phi$ be a property of pairs $(G,A)$ where $G$ is a graph and $A \subseteq V(G)^r$
is an $r$-ary relation on $G$.
Then the generating function of $\Phi$ is defined by
$$
\bP_{\Phi}(G;X) =
\sum_{A \subseteq V(G)^r: \Phi(G,A)} X^{|A|}.
$$
\item[The most general graph polynomials]
Further generalizations of chromatic polynomials were studied
in
\cite{ar:MakowskyZilber2006,ar:KotekMakowskyZilber11,phd:Kotek}
and in \cite{ar:GarijoGoodallNesetril2013,GNOdM16a}.
In \cite{ar:MakowskyZilber2006,ar:KotekMakowskyZilber11}
it was shown that the most general graph polynomials can be obtained
using model theory as developed in \cite{Zilber-uct,ar:CherlinHrushovski}.
A similar approach was used in
\cite{ar:GarijoGoodallNesetril2013,GNOdM16} based on ideas
from \cite{ar:delaHarpeJaeger1995}.
However, for our presentation here, we do not spell out the details
of this approach.
\end{description}

\subsection{Syntactic vs semantic properties of graph polynomials}
\label{MRB:semantic}
An {\em  $n$-ary property of graph polynomials $\Phi$}, aka a GP-{\em property}, is a subset of
the set of graph polynomials
$\mathfrak{GP}_{\mathcal{R},m}^n$ in $\mathcal{R}[\bX]$ in $m$ indeterminates.
$\Phi$ is a {\em semantic property} if it is closed under d.p.-equivalence.
Semantic properties are independent of the  particular presentation of its members.
Consequently, we call a property $\Phi$, which does depend on the presentation of
its members, a {\em syntactic (aka algebraic) property}.
Let us make this definition clearer via examples:

\begin{Examples}
\begin{enumerate}
\item 
The GP-property which says that for every graph $G$ the polynomial $\bP(G,\bX)$ is $P$-unique,
is a semantic property.
\item 
The unary GP-properties of univariate graph polynomials that for each graph $G$ the polynomials
$\bP(G;X)$ is 
{\em monic}\footnote{A univariate polynomial is monic if the leading coefficient equals $1$.},
or that its coefficients are unimodal\footnote{A sequence of numbers $a_i: i \leq m$ is unimodal   
if there is $k \leq m$ such that 
$a_i \leq a_j$ for $i <j < k$ and
$a_i \geq a_j$ for $ k \leq i <j \leq m$. 
}, 
is not a semantic GP-property, because, by applying Theorem \ref{th:equiv},
multiplying each coefficient by a fixed integer gives a d.p.-equivalent graph polynomial.
\item 
The GP-property that
the multiplicity of a certain value $a$ as a root of $\bP(G;X)$ coincides with the value of
a graph parameter $p(G)$ with values in $\N$,
is not a semantic property.
For example, the multiplicity of $0$ as a root of the Laplacian polynomial is the number
of connected components $k(G)$ of $G$, 
\cite[Chapter 1.3.7]{bk:BrouwerHaemers2012}.
However, stating that for two graphs $G_1, G_2$ with $\bP(G_1;X)=\bP(G_2;X)$ we also have
$p(G_1)=p(G_2)$, is a semantic property.
\item 
Similarly, proving that the leading coefficient of a univariate graph polynomial $\bP(G;X)$ 
equals the number of vertices of $G$ 
is not a semantic property, for the same reason.
However, proving that two graphs $G_1, G_2$ with 
$\bP(G_1;\mathbf{X}) = \bP(G_2;\mathbf{X})$ have the same number of vertices is semantically meaningful.
\item 
In similar vain, the classical result of \cite{ar:GodsilGutman81}, 
that the characteristic polynomial of a forest equals the 
(acyclic) matching polynomial of the same forest,
is a syntactic coincidence, or reflects a clever choice in the definition of the acyclic matching polynomial, 
but it is not a semantic GP-property.
The semantic GP-property of this result says that if we restrict our graphs to forests, 
then the characteristic and the
matching polynomials (in all its versions) have the same distinctive power on trees of the same size.
We discussed this and similar examples in 
\cite{ar:MakowskyRavveBlanchard2014} and paraphrase them again in Section \ref{se:zeros}. 
\end{enumerate}
\end{Examples}
To prove a semantic GP-property it is sometimes easier to prove
a stronger non-semantic version. From the above examples,
(iii), (iv) and (v)
are illustrative cases for this.

To motivate our definition of d.p.-equivalence we  first
give the examples taken from \cite{ar:MakowskyRavveBlanchard2014}.
For the multivariate case we use 
\cite{ar:AverbouchKotekMakowskyRavve2011} which shows that the universal EE-polynomial
$\xi(G,X,Y,Z)$ and the component polynomial $C(G;X,Y,Z)$ from \cite{phd:Trinks} are d.p.-equivalent and
are comparable and more expressive than the Tutte polynomial, the matching polynomials,
the independent set polynomial, and the chromatic polynomial.

\newif\ifrevised
\revisedtrue
\section{Distinctive power of various presentations of graph polynomials}
\label{se:linalg}
\label{se:dp}
We already know that s.d.p.-equivalent graph polynomials
can have very different forms, which do not reflect properties of the graphs.
In this section we do restrict the graph polynomials to be of a specific form
$\bP_{\cC}^{form}(G;\bX)$ where $\cC$ is a graph property,
and $\bP_{\cC}^{form}(G;\bX)$ is a presentation  of the graph polynomial
which is uniquely determined by $\cC$, as is the case for Harary polynomials, or generating
functions of induced or spanning subgraphs.
We then ask two questions: 
\begin{enumerate}[(i)]
\item
How does the choice of $\cC$ affect the distinctive power of $\bP_{\cC}^{form}(G;\bX)$,
and
\item
could different choices of $\cC$ yield graph polynomials of the same distinctive power?
\end{enumerate}

\subsection{s.d.p.-equivalence and d.p-equivalence of graph properties}
We recall that
a class of graphs $\cS$ which consists of all graphs having the same number of 
vertices, edges and connected components
is called a {\em similarity class}.

A graph polynomial $\cC$ with values in $\Z$ (without indeterminates) is a graph property 
if $\cC(G) =0$ or $\cC(G)=1$. In this we say $G \in \cC$ iff $\cC(G)=1$.
Let $\cC$ be a graph property, 
two graphs $G,H$ are $\cC$-equivalent if either both are in $\cC$
or both are not in $\cC$. 
We denote by $\bar{\cC}$ the graph property $\cG - \cC$.

Therefore we have:
\begin{proposition} 
\label{pr:dp-properties}
\label{th:char-0}
\begin{enumerate}[(i)]
\item
Two
graph properties
$\cC_1$ and $\cC_2$ are d.p.-equivalent iff 
either  $\cC_1 = \cC_2$ or $\cC_1  = \bar{\cC_2}$.
\item
Two
graph properties
$\cC_1$ and $\cC_2$ are s.d.p.-equivalent iff for every similarity class $\mathcal{S}$
either  $\cC_1 \cap \cS = \cC_2 \cap \cS$ or $\cC_1 \cap \cS  = \bar{\cC_2} \cap \cS$.
\end{enumerate}
\end{proposition}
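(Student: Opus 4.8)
The plan is to reduce both parts to an elementary observation about the partition of $\cG$ induced by a two-valued graph parameter. Viewing a graph property $\cC$ as the $\{0,1\}$-valued graph polynomial with $\cC(G)=1$ iff $G\in\cC$, the relation of $\cC$-equivalence has exactly the two classes $\cC$ and $\bar{\cC}$; that is, $\cC$ partitions $\cG$ into at most two blocks. Unfolding the definitions, $\cC_1\leq_{d.p.}\cC_2$ asserts that $\cC_2$-equivalence implies $\cC_1$-equivalence, i.e.\ that the partition $\{\cC_2,\bar{\cC_2}\}$ refines $\{\cC_1,\bar{\cC_1}\}$; hence d.p.-equivalence holds precisely when the two partitions coincide.

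First I would prove (i) by checking the purely set-theoretic fact that $\{\cC_1,\bar{\cC_1}\}=\{\cC_2,\bar{\cC_2}\}$ holds iff $\cC_1=\cC_2$ or $\cC_1=\bar{\cC_2}$. Alternatively, one can invoke Theorem~\ref{th:equiv}(i): $\cC_1\leq_{d.p.}\cC_2$ iff there is a function $F$ with $\cC_1(G)=F(\cC_2(G))$ for all $G$. Since $\cC_2$ takes only the values $0$ and $1$, the relevant data is the Boolean function $F$ restricted to $\{0,1\}$, and enumerating its four possibilities yields $\cC_1\in\{\cC_2,\bar{\cC_2},\emptyset,\cG\}$. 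Imposing the reverse reducibility $\cC_2\leq_{d.p.}\cC_1$ then eliminates the two constant cases unless $\cC_2$ is itself constant, leaving exactly $\cC_1=\cC_2$ or $\cC_1=\bar{\cC_2}$.

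For (ii) I would observe that s.d.p.-equivalence is exactly d.p.-equivalence carried out inside each similarity class separately: every quantifier in the definition of $\leq_{s.d.p.}$ ranges only over similar graphs, and the similarity classes partition $\cG$. Fixing a similarity class $\cS$ and replacing $\cG$ by $\cS$ and $\cC_i$ by $\cC_i\cap\cS$ in the argument for (i)---noting that $\bar{\cC_2}\cap\cS$ is the complement of $\cC_2\cap\cS$ within $\cS$---gives that $\cC_1$ and $\cC_2$ agree in the s.d.p.\ sense on $\cS$ iff $\cC_1\cap\cS=\cC_2\cap\cS$ or $\cC_1\cap\cS=\bar{\cC_2}\cap\cS$. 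Since this must hold for every $\cS$, part (ii) follows.

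The only point requiring care---and the sole obstacle in an otherwise routine argument---is the degenerate case in which one of the properties is constant (empty or all of $\cG$, or empty/full within a given similarity class), so that its partition collapses to a single block. I would treat this explicitly: a single-block partition coincides with $\{\cC_2,\bar{\cC_2}\}$ only when $\cC_2$ is constant as well, and the claimed form $\cC_1=\cC_2$ or $\cC_1=\bar{\cC_2}$ still captures this situation because $\bar{\emptyset}=\cG$. Handling this case upfront ensures the characterization holds uniformly, with no exceptions.
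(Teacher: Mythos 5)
Your proof is correct and takes essentially the same route as the paper's: identifying d.p.-reducibility of two-valued parameters with refinement of the induced two-block partitions is exactly the paper's disjunction of containments ($\cC_1 \subseteq \cC_2$ or $\cC_1 \subseteq \bar{\cC_2}$, and its three symmetric variants), and mutual refinement forces the partitions to coincide; part (ii) is likewise handled by running the argument inside each similarity class. Your explicit treatment of the degenerate constant cases (and the alternative via Theorem \ref{th:equiv}) merely spells out what the paper compresses into ``Now the result follows.''
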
 
\begin{proof}
It is straightforward that
if $\cC_1$ and $\cC_2$ are d.p.-equivalent then $\cC_2 \cap \cS = \cC_1 \cap \cS$ or $\cC_2  \cap \cS= \bar{\cC_1} \cap \cS$.
\\
For the other direction, we prove first that 
$\cC_1 \cap \cS \subseteq \cC_2 \cap \cS$ or $\cC_1 \cap \cS \subseteq \bar{\cC_2} \cap \cS$.
\\
By a symmetrical argument, we then prove also
$\cC_2 \subseteq \cC_1$ or $\cC_2 \subseteq \bar{\cC_1}$,
$\bar{\cC_1} \subseteq \cC_2$ or $\bar{\cC_1} \subseteq \bar{\cC_2}$ and
$\bar{\cC_2} \subseteq \cC_1$ or $\bar{\cC_2} \subseteq \bar{\cC_1}$.
Now the result follows.
\end{proof}
\begin{remark}
If $\cC_1$ and $\cC_2$ are s.d.p.-equivalent it is possible that for a similarity class $\cS$ we have
$\cC_1 = \cC_2$ but for another similarity class $\cS'$ we have
$\cC_1 = \bar{\cC_2}$.
\end{remark}

\begin{proposition}
\label{prop:dp-incomparable}
\begin{enumerate}[(i)]
\item
Let $\cC_1$ and $\cC_2$ be two graph properties.
Assume  that
both $\cC_1$ and $\cC_1 $ are not empty and do not contain all finite graphs,
and that $\cC_1  \neq \cC_2 $ and $\cC_1  \neq \bar{\cC_2}$.
Then $\cC_1$ and $\cC_2$ are  s.d.p.-incomparable, i.e., $\cC_1 \not \leq_{d.p.} \cC_2$ and $\cC_2 \not \leq_{d.p.} \cC_1$.
\item
Let $\cC_1$ and $\cC_2$ be two graph properties.
Assume there is a similarity class $\cS$ such that
both $\cC_1 \cap \cS$ and $\cC_1 \cap \cS$ are not empty and do not contain all finite graphs in $\cS$,
 and that $\cC_1 \cap \cS \neq \cC_2 \cap \cS$ and $\cC_1  \cap \cS\neq \bar{\cC_2} \cap \cS$.
Then $\cC_1$ and $\cC_2$ are  s.d.p.-incomparable, i.e., $\cC_1 \not \leq_{s.d.p.} \cC_2$ and $\cC_2 \not \leq_{s.d.p.} \cC_1$.
\end{enumerate}
\end{proposition}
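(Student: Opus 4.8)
The plan is to reduce both parts to a single observation: a graph property, viewed as a $\{0,1\}$-valued graph parameter, has at most two equivalence classes, so the characterizations of distinctive power collapse to statements about maps $\{0,1\}\to\{0,1\}$. The engine is Theorem \ref{th:equiv}(i): $\cC_1 \leq_{d.p.} \cC_2$ holds iff there is a function $F$ with $\cC_1(G) = F(\cC_2(G))$ for every graph $G$. Since $\cC_1$ and $\cC_2$ both take values in $\{0,1\}$, only the restriction $F|_{\{0,1\}}\colon \{0,1\}\to\{0,1\}$ is relevant, and there are exactly four such maps: the two constants (forcing $\cC_1 = \emptyset$ or $\cC_1 = \cG$), the identity (forcing $\cC_1 = \cC_2$), and the negation (forcing $\cC_1 = \bar{\cC_2}$).

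For part (i), I would suppose toward a contradiction that $\cC_1 \leq_{d.p.} \cC_2$. The two constant maps are excluded because $\cC_1$ is non-trivial, while the identity and negation are excluded by the hypotheses $\cC_1 \neq \cC_2$ and $\cC_1 \neq \bar{\cC_2}$; hence no admissible $F$ exists, so $\cC_1 \not\leq_{d.p.} \cC_2$. The reverse direction $\cC_2 \not\leq_{d.p.} \cC_1$ follows by symmetry, using that $\cC_1 \neq \bar{\cC_2}$ is equivalent to $\cC_2 \neq \bar{\cC_1}$ because complementation is an involution, and that $\cC_2$ is likewise assumed non-trivial.

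For part (ii) I would relativize the entire argument to the distinguished similarity class $\cS$. The key point to keep in mind is that $\cC_1 \leq_{s.d.p.} \cC_2$ demands the implication for \emph{all} similarity classes at once, so to refute it in both directions it suffices to exhibit the failure inside a single class; moreover, any two graphs lying in the same similarity class are automatically similar, so a witnessing pair found in $\cS$ is admissible for the s.d.p.\ comparison. Working inside $\cS$, I replace each $\cC_i$ by $\cC_i \cap \cS$ and $\bar{\cC_2}$ by the relative complement $\cS \setminus \cC_2$; the hypotheses say exactly that $\cC_1 \cap \cS$ is non-trivial within $\cS$ and differs from both $\cC_2 \cap \cS$ and its relative complement. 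Rerunning the four-case analysis of part (i) inside $\cS$ then yields similar graphs $G,H \in \cS$ with $\cC_2(G)=\cC_2(H)$ but $\cC_1(G)\neq\cC_1(H)$, witnessing $\cC_1 \not\leq_{s.d.p.} \cC_2$, and symmetrically the other direction.

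The only genuine care-point, and the reason the non-triviality hypotheses are indispensable, is that ``not d.p.-equivalent'' by itself does \emph{not} give incomparability: a trivial property $\cC_1 = \emptyset$ satisfies $\cC_1 \leq_{d.p.} \cC_2$ for every $\cC_2$ via the constant map $F \equiv 0$, hence is comparable to, though not equivalent to, any non-trivial $\cC_2$. Excluding the two constant maps through non-triviality is precisely what upgrades the inequivalence supplied by Proposition \ref{pr:dp-properties} to full incomparability. No analytic input is required; the whole proof is the elementary bookkeeping of maps $\{0,1\}\to\{0,1\}$, first globally and then relativized to $\cS$.
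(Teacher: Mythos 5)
Your proof is correct, but it takes a genuinely different route from the paper's. The paper's proof is witness-based: from the hypotheses it extracts explicit graphs in the inhabited cells of the Venn diagram of $\cC_1$ and $\cC_2$ inside a similarity class (e.g.\ $G_2 \in (\cC_2 - \cC_1)\cap\cS$ and $G_3 \in \cC_1\cap\cC_2\cap\cS$), notes that such a pair is $\cC_2$-equivalent but $\cC_1$-distinguished, and dismisses the remaining configurations with ``the other cases being similar''; notably, although it announces a proof of part (i), it carries $\cS$ through the entire argument, so what is written is really the relativized argument for (ii). You instead run everything through the functional characterization of Theorem \ref{th:equiv}: part (i) via the enumeration of the four maps $\{0,1\}\to\{0,1\}$, and part (ii) by freezing $(n(G),m(G),k(G))$ on $\cS$ so that the function in Theorem \ref{th:equiv}(ii) collapses to a map $\{0,1\}\to\{0,1\}$ again. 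What your route buys is the elimination of the paper's case analysis and a precise accounting of which hypothesis excludes which map; in particular, your observation that non-triviality of $\cC_2$ (surely the intent behind the statement's typo ``both $\cC_1$ and $\cC_1$'') is indispensable for the direction $\cC_2 \not\leq_{d.p.}\cC_1$ is a point the paper's terse proof never makes explicit. What the paper's route buys is self-containedness: it needs only the definition of distinctive power, not the characterization theorem. One minor refinement to your write-up: when $\cC_2$ is trivial, $F$ is constrained only on the single value that $\cC_2$ actually attains, so the ``exactly four maps'' picture degenerates; but then $\cC_1 = F(\cC_2(\cdot))$ is forced to be constant, which your non-triviality hypothesis on $\cC_1$ already excludes, so the argument survives intact.
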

\begin{proof}
We prove only (i) and leave the proof of (ii) to the reader.
Assume 
$G_1 \in (\cC_1 - \cC_2) \cap \cS$, 
$G_2 \in (\cC_2 - \cC_1) \cap \cS$ and
$G_3 \in \cC_1 \cap \cC_2 \cap \cS$, the other cases being similar.
Then $G_2, G_3 \in \cC_2 \cap \cS$. 
If $\cC_1 \leq_{d.p.}$, we would have that 
both $G_2, G_3 \in \cC_1 \cap \cS$, or
both $G_2, G_3 \not \in \cC_1 \cap \cS$, a contradiction.
\end{proof}

In the next two subsections we look at graph polynomials, which are either generating functions, or count colorings
which, in both cases, solely depend on a graph property $\cC$.

\subsection{Graph polynomials as generating functions}

Let $\cC$ be a graph property, and $\cD$ be a graph property closed under adding and removing isolated vertices.
Recall from Section \ref{se:gp} the definitions
$$
\bP_{\cC}^{ind}(G;X)= \sum_{A \subseteq V: G[A] \in \cC} X^{|A|}
\mbox{\ \ \ \    and   \ \ \ \ }
\bP_{\cD}^{span}(G;X)= \sum_{B \subseteq E: G\langle B \rangle \in \cD} X^{|B|}.
$$
Let
$|V(G)|= n(G)$ and
$|E(G)|= m(G)$. 

\begin{proposition}
\begin{enumerate}[(i)]
\item
$\cC \leq_{d.p.} \bP_{\cC}^{ind}(G;X)$ and
\item
$\cD \leq_{d.p.} \bP_{\cD}^{span}(G;X)$. 
\end{enumerate}
\end{proposition}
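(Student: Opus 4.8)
The plan is to isolate in each generating function a single coefficient that already decides membership in the defining property, and then to apply Theorem \ref{th:equiv}(i), which reduces $\cC \leq_{d.p.} \bP_{\cC}^{ind}$ to exhibiting a function $F\colon \Z[X] \to \Z$ with $\cC(G) = F(\bP_{\cC}^{ind}(G;X))$ for every graph $G$. For part (i) the key observation is that the only subset $A \subseteq V(G)$ of size $n(G)$ is $A = V(G)$ itself, and that $G[V(G)] = G$. Hence the coefficient of $X^{n(G)}$ in $\bP_{\cC}^{ind}(G;X)$ is exactly $1$ if $G \in \cC$ and $0$ otherwise; that is, this top coefficient equals the value of the $\{0,1\}$-valued parameter $\cC(G)$. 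Reading off this coefficient therefore recovers $\cC(G)$ from the polynomial and supplies the required $F$.

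Part (ii) is entirely parallel, with edges in place of vertices: the unique subset $B \subseteq E(G)$ of size $m(G)$ is $B = E(G)$, and $G\langle E(G)\rangle = G$, so the coefficient of $X^{m(G)}$ in $\bP_{\cD}^{span}(G;X)$ equals $\cD(G)$. Here the standing assumption that $\cD$ is closed under adding and removing isolated vertices keeps the generating function well defined, but the identification of the top coefficient uses only $G\langle E(G)\rangle = G$. The same appeal to Theorem \ref{th:equiv}(i) then finishes this case.

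The step needing care is ensuring that the candidate $F$ is a function of the polynomial alone, i.e.\ that ``the coefficient of index $n(G)$'' (respectively $m(G)$) is well defined on the range of the generating function; equivalently, one must exclude graphs $G_1, G_2$ with $\bP_{\cC}^{ind}(G_1;X) = \bP_{\cC}^{ind}(G_2;X)$ but $\cC(G_1) \neq \cC(G_2)$. This amounts to recovering the order $n(G)$ (respectively the size $m(G)$) from the polynomial. For similar graphs it is immediate, since Theorem \ref{th:equiv}(ii) already allows $F$ to depend on $n(G), m(G), k(G)$, so the index at which to read the top coefficient is available for free and one obtains $\cC \leq_{s.d.p.} \bP_{\cC}^{ind}$ directly; in the full d.p.\ setting the whole weight of the argument falls on pinning down that index from the polynomial itself, and this is where I expect the main obstacle to lie.
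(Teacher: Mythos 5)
You have reproduced the paper's proof exactly: the paper's entire argument is the one sentence that $G \in \cC$ iff the coefficient of $X^{n(G)}$ in $\bP_{\cC}^{ind}(G;X)$ does not vanish (and likewise with $X^{m(G)}$ for $\bP_{\cD}^{span}$), which is precisely your observation that the top coefficient is the indicator of membership. Where you differ is that you then notice what the paper passes over in silence: to obtain a d.p.-reduction one must, by Theorem \ref{th:equiv}(i), produce a single function $F$ with $\cC(G) = F\bigl(\bP_{\cC}^{ind}(G;X)\bigr)$ for all $G$, so the index $n(G)$ at which the decisive coefficient sits must be recoverable from the polynomial alone. You flag this as the main obstacle and leave it open.

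Your suspicion is correct, and the obstacle is in fact insurmountable: the proposition as stated, with $\leq_{d.p.}$, is false, and the paper's own proof fails at exactly the point you identified. Take $\cC = \{E_2\}$. Then $\bP_{\cC}^{ind}(G;X)$ counts non-adjacent vertex pairs,
\[
\bP_{\cC}^{ind}(G;X) = \left( \binom{n(G)}{2} - m(G) \right) X^2 ,
\]
so $\bP_{\cC}^{ind}(E_2;X) = \bP_{\cC}^{ind}(P_3;X) = X^2$ while $E_2 \in \cC$ and $P_3 \notin \cC$; hence no function $F$ as above can exist. A counterexample of the same kind works for (ii): with $\cD = \{P_3 \sqcup E_j : j \in \N\}$, both $P_3$ and $P_3 \sqcup K_2$ have spanning-subgraph polynomial $X^2$, but only the first lies in $\cD$. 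What is true, and what your argument does prove (similar graphs share $n(G)$ and $m(G)$, so Theorem \ref{th:equiv}(ii) applies, or one simply reads the coefficient at the common index), is the s.d.p.\ version --- exactly the Corollary the paper states immediately after this Proposition. So your proposal establishes everything here that is actually provable; the step you could not fill in cannot be filled.
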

\begin{proof}
(i) follows from the fact that
$G \in \cC$ iff the coefficient of $X^{n(G)}$ in $\bP_{\cC}^{ind}(G;X)$ does not vanish.
\\
Similarly, (ii) follows from the fact that
$G \in \cC$ iff the coefficient of $X^{m(G)}$ in $\bP_{\cC}^{span}(G;X)$ does not vanish.
\end{proof}

From Lemma \ref{le:new} we get immediately:
\begin{corollary}
\begin{enumerate}[(i)]
\item
$\cC \leq_{s.d.p.} \bP_{\cC}^{ind}(G;X)$ and
\item
$\cD \leq_{s.d.p.} \bP_{\cD}^{span}(G;X)$. 
\end{enumerate}
\end{corollary}

\begin{proposition}
\label{prop:complement}
With
$|V(G)|= n(G)$ and
$|E(G)|= m(G)$ we have:
\begin{enumerate}[(i)]
\item
$\bP_{\cC}^{ind}(G;X) + \bP_{\bar{\cC}}^{ind}(G;X) = 
(1+X)^{n(G)}$
\item
$\bP_{\cD}^{span}(G;X) + \bP_{\bar{\cD}}^{span}(G;X) = 
(1+X)^{m(G)}$
\end{enumerate}
\end{proposition}
\begin{proof}
(i):
Put
$$
c_i(G) = | \{ A \subseteq V(G): |A|=i, G[A] \in \mathcal{C} \}|
$$
and
$$
\bar{c}_i(G) = | \{ A \subseteq V(G): |A|=i, G[A] \not\in \mathcal{C} \}|.
$$
Clearly, 
$$
c_i(G) + \bar{c}_i(G) = { n(G) \choose i},
$$
hence
$$
\sum_{i=0}^{n(G)} \left(c_i(G) + \bar{c}_i(G)\right) X^i = (1+X)^{n(G)}.
$$

(ii) is similar, but we need that for a set of edges $A \subseteq E(G)$ the spanning subgraph
$G\langle A\rangle =(V(G),A) \in \cD$ iff  $V(A),A) \in \cD$,
where 
$$
V(A) = \{ v \in V(G) : \mbox{ there is   } u \in V(G) \mbox{ with  } (u,v) \in A\}.
$$
\end{proof}

\ifrevised
\begin{proposition}
\label{th:char-1}
Let $\cC_1$ and $\cC_2$, $\cD_1$ and $\cD_2$ be graph properties 
such that
$\cC_1$ and $\cC_2$  and $\cD_1$ and $\cD_2$ are pairwise d.p.-equivalent,
\begin{enumerate}[(i)]
\item
\label{th:char-1.1}
$\bP_{\cC_1}^{ind}(G;X)$ and $\bP_{\cC_2}^{ind}(G;X)$
are s.d.p.-equivalent;
\item
\label{th:char-1.2}
If, additionally, $\cD_1$ and $\cD_2$ are closed under the addition and removal of isolated vertices, then
$\bP_{\cD_1}^{span}(G;X)$ and $\bP_{\cD_2}^{span}(G;X)$
are s.d.p.-equivalent;
\end{enumerate}
\end{proposition}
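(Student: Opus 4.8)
The plan is to reduce the problem to the dichotomy for d.p.-equivalence of graph properties established in Proposition \ref{pr:dp-properties}(i): since $\cC_1$ and $\cC_2$ are d.p.-equivalent, either $\cC_1 = \cC_2$ or $\cC_1 = \bar{\cC_2}$, and likewise for $\cD_1$ and $\cD_2$. In the equality case the two generating functions coincide as polynomials for every graph $G$, so they are even d.p.-equivalent, and s.d.p.-equivalence follows from Lemma \ref{le:new}. All the content therefore lies in the complementation case, and the two parts are handled by the same device applied to the two complementation identities of Proposition \ref{prop:complement}.

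For part (i), assume $\cC_1 = \bar{\cC_2}$. Proposition \ref{prop:complement}(i) then gives, for every graph $G$,
$$
\bP_{\cC_2}^{ind}(G;X) = \bP_{\bar{\cC_1}}^{ind}(G;X) = (1+X)^{n(G)} - \bP_{\cC_1}^{ind}(G;X).
$$
The crucial point is that the prefactor $(1+X)^{n(G)}$ depends only on the number of vertices, hence is constant on every similarity class. So for two similar graphs $G$ and $H$ we have $n(G)=n(H)$, and therefore $\bP_{\cC_1}^{ind}(G;X)=\bP_{\cC_1}^{ind}(H;X)$ holds if and only if $(1+X)^{n(G)}-\bP_{\cC_1}^{ind}(G;X)=(1+X)^{n(H)}-\bP_{\cC_1}^{ind}(H;X)$, that is, if and only if $\bP_{\cC_2}^{ind}(G;X)=\bP_{\cC_2}^{ind}(H;X)$. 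Thus $\bP_{\cC_1}^{ind}$-equivalence and $\bP_{\cC_2}^{ind}$-equivalence coincide on similar graphs, which is exactly s.d.p.-equivalence. Equivalently, setting $F(p,n,m,k)=(1+X)^n-p$ one obtains $\bP_{\cC_2}^{ind}(G;X)=F(\bP_{\cC_1}^{ind}(G;X),n(G),m(G),k(G))$, and since $F$ is its own inverse in the first argument, Theorem \ref{th:equiv}(ii) yields s.d.p.-reducibility in both directions.

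Part (ii) is proved by the same argument with Proposition \ref{prop:complement}(ii) in place of (i): from $\cD_1=\bar{\cD_2}$ one gets $\bP_{\cD_2}^{span}(G;X)=(1+X)^{m(G)}-\bP_{\cD_1}^{span}(G;X)$, and now the prefactor $(1+X)^{m(G)}$ is constant on similarity classes because similar graphs have the same number of edges. The extra hypothesis that $\cD_1$ and $\cD_2$ be closed under adding and removing isolated vertices is what makes both $\bP_{\cD_i}^{span}$ well-defined and what legitimizes the use of Proposition \ref{prop:complement}(ii); note that this closure is automatically inherited by $\bar{\cD_2}$, so the two hypotheses are consistent. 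I do not anticipate a real obstacle; the single point that needs care is that similarity supplies exactly the invariant cancelling the prefactor --- the vertex count in the induced case and the edge count in the spanning case --- and it is the dependence of this prefactor on $n(G)$ (respectively $m(G)$) across distinct similarity classes that blocks any strengthening of the conclusion to full d.p.-equivalence.
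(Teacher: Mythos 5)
Your proof is correct and follows essentially the same route as the paper's: invoke the dichotomy of Proposition \ref{pr:dp-properties} (either $\cC_1=\cC_2$ or $\cC_1=\bar{\cC_2}$), dispose of the equality case trivially, and in the complementation case combine Proposition \ref{prop:complement} with the characterization in Theorem \ref{th:equiv}, noting that the prefactor $(1+X)^{n(G)}$ (resp. $(1+X)^{m(G)}$) is constant on similarity classes, which is precisely why the conclusion is s.d.p.- rather than d.p.-equivalence. Your write-up is in fact more careful than the paper's terse proof, which only treats (i) and garbles its final sentence.
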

\begin{proof}
We prove  only (i), (ii) is proved analogously.
\\
(i): We use Proposition 
\ref{pr:dp-properties}.
If $\cC_1 =  \cC_2$, clearly, 
$\bP_{\cC_1}^{ind}(G;X) = \bP_{\cC_2}^{ind}(G;X)$, 
hence they are d.p.-equivalent.
If $\cC_1 =  \bar{\cC_2}$, we use Proposition \ref{prop:complement} together with 
Proposition \ref{th:dp}. But Proposition \ref{prop:complement} depends on the $n(G)$, 
hence we get only that 
$\bP_{\cC_1}^{ind}(G;X) = \bP_{\cC_2}^{ind}(G;X)$, 
are d.p.-equivalent.
\end{proof}

A sequence of polynomials $f(\bX)_i$ is C-finite if it satisfies a linear recurrence relation
of depth $p$
with constant coefficients $a_j(\bX) \in \Z[\bX]$:
\begin{gather}
f(\bX)_{i+p} = \sum_{j=0}^{p-1} a_j(\bX) f(\bX)_{i+j}.
\end{gather}
Let 
$G_n$ be an indexed sequence of graphs  such that
the sequence of polynomials $X^{|V(G_n)|}$ is C-finite.
This assumption is true for 
the sequences $C_n$, $P_n$, $K_n$, of cycles, paths and cliques, and all sequences $G_n$ of graphs
provided the function $|V(G_n)|$ is linear in $n$.
In particular, it applies to Theorem \ref{th:chebyshev}.
We shall now show that C-finiteness of the sequences of polynomials $\bP_{\cC}^{ind}(G_n;X)$
of Theorem \ref{th:chebyshev}
is a semantic property graph polynomials as generating functions.
However, the particular form of the recurrence relation is not.

\begin{theorem}
\label{th:char-C-finite}
Let $\cC_1$ and $\cC_2$, $\cD_1$ and $\cD_2$ be graph properties 
such that
$\cC_1$ and $\cC_2$  and $\cD_1$ and $\cD_2$ are pairwise d.p.-equivalent,
and let $G_n$ be an indexed sequence of graphs. 
Furthermore, assume that the sequence of polynomials $X^{|V(G_n)|}$ is C-finite.
Then
\begin{enumerate}[(i)]
\item
$\bP_{\cC_1}^{ind}(G_n;\bX)$ is C-finite iff $\bP_{\cC_2}^{ind}(G_n;\bX)$ is C-finite. 
\item
$\bP_{\cD_1}^{span}(G_n;\bX)$ is C-finite iff $\bP_{\cD_2}^{span}(G_n;\bX)$ is C-finite; 
\end{enumerate}
\end{theorem}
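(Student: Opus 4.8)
The plan is to treat both parts by the same template, reducing each equivalence to a single fact about one ``total'' polynomial. By Proposition~\ref{pr:dp-properties}, d.p.-equivalence of graph properties is extremely rigid: $\cC_1$ and $\cC_2$ are d.p.-equivalent only if $\cC_1=\cC_2$ or $\cC_1=\bar{\cC_2}$, and likewise for $\cD_1,\cD_2$. In the first case the two generating functions are \emph{literally the same sequence}, so one is C-finite iff the other is, trivially; the entire content lies in the complementary case, where I would invoke the complement identities of Proposition~\ref{prop:complement}. The algebraic engine is that, for fixed depth $p$ and fixed coefficients $a_j(\bX)\in\Z[\bX]$, the C-finite sequences form a $\Z[\bX]$-module (closed under termwise addition and subtraction); hence across an identity $\bP_{\cC_1}^{ind}(G_n;X)=(1+X)^{n(G_n)}-\bP_{\cC_2}^{ind}(G_n;X)$, C-finiteness transfers between the two sides precisely when the right-hand total term $(1+X)^{n(G_n)}$ is itself C-finite.

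For part (i) this is exactly what the hypothesis buys, via a lemma I would isolate first. The substitution $\sigma:X\mapsto 1+X$ is a $\Z$-algebra automorphism of $\Z[X]$ acting termwise on sequences; applying it to a recurrence $f_{i+p}=\sum_{j=0}^{p-1}a_j(X)f_{i+j}$ yields $\sigma(f_{i+p})=\sum_{j=0}^{p-1}a_j(1+X)\,\sigma(f_{i+j})$, and since $a_j(1+X)\in\Z[X]$ this is again a valid C-finite recurrence. So C-finiteness is preserved by $\sigma$, and from the assumed C-finiteness of $X^{n(G_n)}$ I conclude that $(1+X)^{n(G_n)}=\sigma\bigl(X^{n(G_n)}\bigr)$ is C-finite. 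Combining this with Proposition~\ref{prop:complement}(i) and the module-closure above, both directions of (i) follow at once: if $\bP_{\cC_2}^{ind}(G_n;X)$ is C-finite then so is $(1+X)^{n(G_n)}-\bP_{\cC_2}^{ind}(G_n;X)=\bP_{\cC_1}^{ind}(G_n;X)$, and symmetrically.

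Part (ii) is where the main obstacle lies, and it is genuine rather than bookkeeping. Running the same template for spanning subgraphs forces Proposition~\ref{prop:complement}(ii), whose total term is $(1+X)^{m(G_n)}$ rather than $(1+X)^{n(G_n)}$. But the hypothesis controls only the \emph{vertex} exponent $n(G_n)$, and C-finiteness of $X^{n(G_n)}$ does not imply C-finiteness of $X^{m(G_n)}$ (equivalently, by $\sigma$, of $(1+X)^{m(G_n)}$). Indeed, any C-finite sequence of polynomials has degree growing at most linearly in the index, from $\deg f_{i+p}\le\max_j\deg a_j+\max_j\deg f_{i+j}$, whereas for $G_n=K_n$ one has $n(G_n)=n$ but $m(G_n)=\binom{n}{2}$, so $X^{n(G_n)}=X^n$ is C-finite while $X^{m(G_n)}=X^{\binom{n}{2}}$ is not. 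This is not an artifact of the proof: taking $\cD_1$ to be the edgeless graphs and $\cD_2=\bar{\cD_1}$ (both closed under adding and removing isolated vertices, and d.p.-equivalent by Proposition~\ref{pr:dp-properties}), one computes $\bP_{\cD_1}^{span}(K_n;X)=1$, which is C-finite, but $\bP_{\cD_2}^{span}(K_n;X)=(1+X)^{\binom{n}{2}}-1$, which is not, contradicting the stated equivalence.

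The conclusion of the plan is therefore that the step which must be supplied to make (ii) go through is the extra assumption that $X^{m(G_n)}$ is C-finite, i.e.\ that $(1+X)^{m(G_n)}$ is C-finite; under it the argument of (i) transfers verbatim through Proposition~\ref{prop:complement}(ii). I would accordingly state (ii) with this corrected hypothesis, which is automatically satisfied, for example, whenever $m(G_n)$ is linear in $n$, and flag that the vertex-only hypothesis as written is insufficient for the edge version. Thus the hard part is not the transfer mechanism but recognizing that the span complement identity lives over $m(G_n)$, so that the vertex-exponent hypothesis cannot on its own close part (ii).
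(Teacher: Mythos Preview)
Your treatment of part (i) is correct and matches the paper's one-line proof (``sum and difference of C-finite sequences are C-finite, together with Proposition~\ref{prop:complement}''), while supplying a detail the paper leaves implicit: that C-finiteness of $X^{n(G_n)}$ transfers to $(1+X)^{n(G_n)}$ via the ring automorphism $X\mapsto 1+X$ applied termwise to the recurrence.

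Your objection to part (ii) is also correct and exposes a genuine gap that the paper's proof does not address. The paper invokes Proposition~\ref{prop:complement} for the span case as well, but that identity involves $(1+X)^{m(G_n)}$, not $(1+X)^{n(G_n)}$, and the stated hypothesis gives no control over $m(G_n)$. Your counterexample with $G_n=K_n$, $\cD_1$ the class of edgeless graphs, and $\cD_2=\bar{\cD_1}$ is valid: both classes are closed under adding and removing isolated vertices, they are d.p.-equivalent by Proposition~\ref{pr:dp-properties}(i), $\bP_{\cD_1}^{span}(K_n;X)=1$ is C-finite, while $\bP_{\cD_2}^{span}(K_n;X)=(1+X)^{\binom{n}{2}}-1$ has degree $\binom{n}{2}$ and hence cannot be C-finite by your linear degree-growth bound. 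The repair you propose---assuming in addition that $X^{m(G_n)}$ is C-finite for part (ii)---is exactly what is needed; the paper's statement and proof as written do not cover this.
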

\begin{proof}
This follows in both cases from the fact that the sum and difference of two C-finite sequences
is again C-finite together with Proposition \ref{prop:complement}.
\end{proof}

\else
\begin{theorem}
\label{th:char-1}
Let $\cC_1$ and $\cC_2$, $\cD_1$ and $\cD_2$ be graph properties 
such that
$\cC_1$ and $\cC_2$  and $\cD_1$ and $\cD_2$ are pairwise d.p.-equivalent,
 and let $G_n$ be an indexed sequence of graphs. Then
\begin{enumerate}[(i)]
\item
\label{th:char-1.1}
$\bP_{\cC_1}^{ind}(G;X)$ and $\bP_{\cC_2}^{ind}(G;X)$
are d.p.-equivalent;
\item
$\bP_{\cC_1}^{ind}(G_n;\bX)$ is C-finite iff $\bP_{\cC_2}^{ind}(G_n;\bX)$ is C-finite. 
\item
\label{th:char-1.2}
If, additionally, $\cD_1$ and $\cD_2$ are closed under the addition and removal of isolated vertices, then
$\bP_{\cD_1}^{span}(G;X)$ and $\bP_{\cD_2}^{span}(G;X)$
are d.p.-equivalent;
\item
$\bP_{\cD_1}^{span}(G_n;\bX)$ is C-finite iff $\bP_{\cD_2}^{span}(G_n;\bX)$ is C-finite; 
\end{enumerate}
\end{theorem}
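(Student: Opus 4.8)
The plan is to prove all four parts by the case split permitted by Proposition \ref{pr:dp-properties}: since $\cC_1$ and $\cC_2$ are d.p.-equivalent graph properties, either $\cC_1 = \cC_2$ or $\cC_1 = \bar{\cC_2}$, and likewise for $\cD_1, \cD_2$. In the first case there is nothing to do, because $\bP_{\cC_1}^{ind}(G;X)$ and $\bP_{\cC_2}^{ind}(G;X)$ are literally the same polynomial for every $G$; this settles (i) and (iii) by equality, and makes C-finiteness of one sequence identical to C-finiteness of the other, settling (ii) and (iv). Hence the entire content sits in the complementary case $\cC_1 = \bar{\cC_2}$, where I would invoke Proposition \ref{prop:complement}(i) to obtain, for every graph $G$,
\[
\bP_{\cC_1}^{ind}(G;X) = (1+X)^{n(G)} - \bP_{\cC_2}^{ind}(G;X),
\]
together with the analogous identity using $m(G)$ and Proposition \ref{prop:complement}(ii) in the spanning case. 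For the spanning case I would first note that the required closure under addition and removal of isolated vertices passes to complements, so $\bar{\cD_2}$ is again an admissible property and $\bP_{\bar{\cD_2}}^{span}$ is well defined.

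For parts (i) and (iii) I would then produce the function $F$ demanded by Theorem \ref{th:dp}(i), i.e. one with $\bP_{\cC_1}^{ind}(G;X) = F(\bP_{\cC_2}^{ind}(G;X))$. By the displayed identity this amounts to expressing the prefactor $(1+X)^{n(G)}$ as a function of $\bP_{\cC_2}^{ind}(G;X)$ alone: once $n(G)$ is read off from the polynomial, one sets $F(p) = (1+X)^{n(G)} - p$, and part (iii) is identical with $m(G)$ replacing $n(G)$.

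For parts (ii) and (iv) I would use that the class of C-finite sequences is closed under termwise sum and difference. In the complementary case the displayed identity exhibits $\bP_{\cC_1}^{ind}(G_n;X)$ as the difference of the prefactor sequence $(1+X)^{n(G_n)}$ and $\bP_{\cC_2}^{ind}(G_n;X)$; so as soon as the prefactor sequence is C-finite, C-finiteness transfers in both directions between $\bP_{\cC_1}^{ind}$ and $\bP_{\cC_2}^{ind}$, and symmetrically for the spanning polynomials with $(1+X)^{m(G_n)}$.

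The hard part, and the step I expect to be the genuine obstacle throughout, is controlling this prefactor. For (i) and (iii) the function $F$ must depend on $\bP_{\cC_2}^{ind}(G;X)$ only, yet the coefficient of $X^{n(G)}$ in $\bP_{\cC_2}^{ind}(G;X)$ equals $1$ precisely when $G \in \cC_2$ and vanishes otherwise; so recovering $n(G)$ from $\bP_{\cC_2}^{ind}(G;X)$ is the delicate point, and it is exactly here that the extra data $(n(G), m(G), k(G))$ made available by Theorem \ref{th:dp}(ii) enters. Correspondingly, for (ii) and (iv) the whole argument hinges on the prefactor sequence $(1+X)^{n(G_n)}$ (respectively $(1+X)^{m(G_n)}$) being itself C-finite, which is the property one must secure for the difference argument to go through. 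Thus in all four parts the main effort concentrates on the prefactor rather than on the combinatorics of $\cC_i$ and $\cD_i$: establishing that $n(G)$ (resp. $m(G)$) is recoverable, and that the induced prefactor sequence is C-finite, is the crux on which the theorem turns.
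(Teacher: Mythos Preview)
Your plan is exactly the paper's: split via Proposition~\ref{pr:dp-properties} into $\cC_1=\cC_2$ (trivial) and $\cC_1=\bar\cC_2$, and in the latter case combine Proposition~\ref{prop:complement} with Theorem~\ref{th:dp}; for (ii) and (iv), use that C-finite sequences are closed under sums and differences. The paper's proof is a three-line sketch citing precisely these ingredients and nothing more.

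The difficulties you flag at the end are real, and they are not merely technicalities to be filled in. In the case $\cC_1=\bar\cC_2$, Proposition~\ref{prop:complement} gives $\bP_{\cC_1}^{ind}(G;X)=(1+X)^{n(G)}-\bP_{\cC_2}^{ind}(G;X)$, so one recovers $\bP_{\cC_1}^{ind}$ from $\bP_{\cC_2}^{ind}$ \emph{and} $n(G)$. But $n(G)$ is in general not determined by $\bP_{\cC_2}^{ind}(G;X)$ alone (take $\cC_2$ a finite class: then $\bP_{\cC_2}^{ind}$ has bounded degree while $n(G)$ is unbounded), so one is forced onto Theorem~\ref{th:dp}(ii), which yields only s.d.p.-equivalence. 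The paper's source actually contains a revised version of this result in which (i) and (iii) are downgraded to s.d.p.-equivalence for exactly this reason. Likewise, the C-finiteness transfer in (ii) and (iv) needs $(1+X)^{n(G_n)}$ (respectively $(1+X)^{m(G_n)}$) to be C-finite, and that is an extra hypothesis on the sequence $G_n$, not a consequence of the assumptions; the paper's revised Theorem~\ref{th:char-C-finite} explicitly adds the assumption that $X^{|V(G_n)|}$ is C-finite. So you have correctly located the points where the argument requires more than the stated hypotheses provide.
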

\begin{proof}
We prove (i) and (ii), (iii) and (iv) are proved analogously.
\\
(i): We use Proposition 
\ref{pr:dp-properties}.
If $\cC_1 =  \cC_2$, clearly, 
$\bP_{\cC_1}^{ind}(G;X) = \bP_{\cC_2}^{ind}(G;X)$, hence they are d.p.-equivalent.
If $\cC_1 =  \bar{\cC_2}$, we use Proposition \ref{prop:complement} together with 
Proposition \ref{th:dp}. 
\\
(ii) follows from Proposition \ref{prop:complement}.
\end{proof}
\fi 

\subsection{Harary polynomials}
Recall from the introduction the definition of
$\chi_{\cC}(G;k)$ as the number of colorings of $G$
with at most $k$ colors such that each color class induces a graph in $\cC$.
\begin{theorem}[J. Makowsky and B. Zilber, cf. \cite{ar:KotekMakowskyZilber11}]
$\chi_{\cC}(G;k)$ is a polynomial in $k$
for any graph property $\cC$.
\end{theorem}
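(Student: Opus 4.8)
The plan is to show that $\chi_{\cC}(G;k)$, viewed as a function of the non-negative integer $k$, agrees with a polynomial in $k$ of degree at most $n(G)$. The natural strategy is to set up a counting identity relating the ``exactly $k$ colors'' count to the ``at most $k$ colors'' count, and then to exhibit the latter explicitly as a polynomial via binomial coefficients.

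First I would fix the graph $G$ with $n = n(G)$ vertices and, for each integer $j$ with $0 \le j \le n$, let $S_{\cC}(G;j)$ denote the number of \emph{ordered} partitions of $V(G)$ into exactly $j$ nonempty blocks $(C_1,\ldots,C_j)$ such that each induced subgraph $G[C_i] \in \cC$, or alternatively the number of \emph{unordered} such partitions $P_{\cC}(G;j)$. The key observation is that a proper-in-the-$\cC$-sense coloring with a palette of $k$ available colors is exactly a choice of an unordered partition into at most $k$ $\cC$-blocks together with an injective assignment of distinct colors to the blocks. Thus I would write the counting identity
$$
\chi_{\cC}(G;k) = \sum_{j=0}^{n} P_{\cC}(G;j) \cdot k_{(j)},
$$
where $k_{(j)} = k(k-1)\cdots(k-j+1)$ is the falling factorial, since there are $k_{(j)}$ ways to assign $j$ distinct colors from a set of $k$ to the $j$ blocks. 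The coefficients $P_{\cC}(G;j)$ depend only on $G$ and $\cC$ and not on $k$.

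The main point is then immediate: each $k_{(j)}$ is a polynomial in $k$ of degree $j$, and $P_{\cC}(G;j)$ is a fixed non-negative integer, so the right-hand side is a polynomial in $k$ of degree at most $n$. This establishes that the integer-valued function $k \mapsto \chi_{\cC}(G;k)$ coincides with a genuine polynomial in the indeterminate $k$, which is exactly the assertion. I would note that no hypothesis on $\cC$ beyond being a graph property is used, matching the generality of the statement, and that the same argument with color classes of edges inducing spanning subgraphs in $\cC$ handles the edge-coloring variant.

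The step requiring the most care is the bijective justification of the identity, namely verifying that every coloring counted by $\chi_{\cC}(G;k)$ decomposes uniquely as (partition into $\cC$-blocks) plus (injection of block-labels into the palette). The subtlety is that empty color classes must be permitted since we color with \emph{at most} $k$ colors, which is precisely why summing over all $j \le n$ and using falling factorials — rather than $k^j$ — is correct: the falling factorial $k_{(j)}$ counts injective label assignments and thereby avoids double-counting partitions that use fewer than $k$ colors. Once this bookkeeping is pinned down, the polynomiality is a formality, so I expect the combinatorial set-up of the identity, not any algebraic manipulation, to be the crux.
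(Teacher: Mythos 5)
Your proof is correct: the decomposition of a coloring into (unordered partition of $V(G)$ into $\cC$-blocks) plus (injection of blocks into the palette) is exactly right, the falling factorials handle the ``at most $k$ colors'' bookkeeping properly, and the conclusion that $\chi_{\cC}(G;k)$ agrees with a polynomial of degree at most $n(G)$ follows. Note, however, that the paper does not prove this theorem at all; it cites it from \cite{ar:KotekMakowskyZilber11}, where polynomiality is obtained as a special case of a much more general model-theoretic result about counting functions of definable colorings. Your identity does appear implicitly in the paper: in the section on generalized chromatic polynomials it writes $\chi_{mcp}(G;k)=\sum_{\ell} c_{mcp}(\ell,G)\binom{X}{\ell}$ with $c_{mcp}(\ell,G)$ the number of colorings using exactly $\ell$ colors, which is your formula after the change of basis $c(\ell)=\ell!\,P_{\cC}(G;\ell)$ and $\ell!\binom{k}{\ell}=k_{(\ell)}$. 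The trade-off between the two routes is worth seeing clearly. Your elementary argument is self-contained, gives the explicit degree bound, and shows the coefficients in the falling-factorial (equivalently binomial) basis are non-negative integers, so the polynomial is integer-valued. The model-theoretic machinery of Makowsky--Zilber buys generality: it also covers generalized chromatic polynomials such as $\chi_{mcp}$, for which the paper proves (Theorem on $mcp$-colorings not being Harary polynomials) that no graph property $\cC$ makes them Harary polynomials; there admissibility of a coloring is not determined blockwise by the color classes, so your partition bijection has no analogue and the polynomiality must be established by a different, logic-based argument.
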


\ifrevised
\else
\begin{theorem}
\label{th:char-2}
Let $\cC_1$ and $\cC_2$ two d.p.-equivalent graph properties.
Then
$\chi_{\cC_1}(G;k)$ and $\chi_{\cC_2}(G;k)$
are d.p.-equivalent. 
\end{theorem}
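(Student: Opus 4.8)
The plan is to reduce the statement to the structural classification of d.p.-equivalent graph properties and then dispose of the resulting cases. By Proposition \ref{pr:dp-properties}(i), two d.p.-equivalent graph properties satisfy either $\cC_1 = \cC_2$ or $\cC_1 = \bar{\cC_2}$, so it suffices to treat these two alternatives. The first is immediate: if $\cC_1 = \cC_2$ then $\chi_{\cC_1}(G;k)$ and $\chi_{\cC_2}(G;k)$ are literally the same Harary polynomial, hence trivially d.p.-equivalent. All the content therefore sits in the complement case $\cC_1 = \bar{\cC_2}$, where I must show that $\chi_{\cC}(G;k)$ and $\chi_{\bar{\cC}}(G;k)$ are d.p.-equivalent.

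For that case I would invoke Theorem \ref{th:equiv}(i), which says it is enough to produce graph-independent functions $F,F'$ with $\chi_{\bar{\cC}}(G;k) = F(\chi_{\cC}(G;k))$ and $\chi_{\cC}(G;k) = F'(\chi_{\bar{\cC}}(G;k))$ for every $G$. To look for such functions I would first put the Harary polynomial into the falling-factorial basis: grouping colorings by the partition of $V(G)$ they induce, one has $\chi_{\cC}(G;k) = \sum_{t} P_{\cC}(G,t)\, k_{(t)}$, where $k_{(t)} = k(k-1)\cdots(k-t+1)$ and $P_{\cC}(G,t)$ is the number of partitions of $V(G)$ into exactly $t$ nonempty blocks, each inducing a graph in $\cC$. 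Thus $\chi_{\cC}(G;\cdot)$ is interchangeable with the coefficient sequence $(P_{\cC}(G,t))_t$, and d.p.-equivalence of $\chi_{\cC}$ and $\chi_{\bar{\cC}}$ becomes the question of whether $(P_{\cC}(G,t))_t$ and $(P_{\bar{\cC}}(G,t))_t$ have equal distinctive power over graphs. The natural template for relating the two is Proposition \ref{prop:complement}, which for generating functions supplies the clean identity $\bP_{\cC}^{ind}(G;X) + \bP_{\bar{\cC}}^{ind}(G;X) = (1+X)^{n(G)}$; I would try to manufacture an analogue by writing each block indicator as $[\,G[S]\in\bar{\cC}\,] = 1 - [\,G[S]\in\cC\,]$ and inverting over the partition lattice by Möbius inclusion--exclusion.

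The main obstacle is exactly this complement case, and I expect it to be genuinely delicate. Unlike the generating-function situation, where every vertex subset is charged to exactly one of $\bP_{\cC}^{ind}$, $\bP_{\bar{\cC}}^{ind}$, a coloring counted by a Harary polynomial may freely mix blocks lying in $\cC$ with blocks lying in $\bar{\cC}$, so there is no linear identity tying $\chi_{\cC}$ to $\chi_{\bar{\cC}}$; the quantities $P_{\cC}(G,t)$ and $P_{\bar{\cC}}(G,t)$ are two specific, non-complementary slices of the partition count, and recovering one sequence from the other need not be possible by any graph-independent rule. Moreover, even the generating-function identity only yields equivalence \emph{after} dividing out the vertex count $n(G)$, i.e. it descends merely to s.d.p.-equivalence, as the revised Proposition \ref{th:char-1} records; this strongly suggests that any recovery function here will likewise require $n(G)$ and that the honest conclusion may be s.d.p.-equivalence rather than full d.p.-equivalence.

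Before committing to the inclusion--exclusion route I would therefore stress-test the statement on the extreme pair $\cC = \cG$ (all graphs, for which $\chi_{\cC}(G;k) = k^{n(G)}$) and $\bar{\cC} = \emptyset$ (for which $\chi_{\bar{\cC}}(G;k)$ vanishes on every graph with at least one vertex): these two properties are d.p.-equivalent by Proposition \ref{pr:dp-properties}(i), yet $k^{n(G)}$ separates graphs by order while the zero polynomial separates nothing. If this pair indeed fails to be d.p.-equivalent, the proof cannot go through verbatim, and the theorem should either be restricted to nontrivial properties $\cC$ (with the block-mixing difficulty still to be overcome) or weakened to s.d.p.-equivalence in parallel with Proposition \ref{th:char-1}; settling which of these is the correct reading is the crux of the argument.
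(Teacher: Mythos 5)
Your refusal to push the proof through is the right call: the statement is false, and your closing ``stress test'' is a genuine refutation rather than a mere worry. The pair $\cC_1 = \cG$ (all graphs), $\cC_2 = \emptyset$ is d.p.-equivalent by Proposition \ref{pr:dp-properties}(i), yet $\chi_{\cC_1}(G;k) = k^{n(G)}$ separates graphs of different order while $\chi_{\cC_2}$ vanishes identically on graphs with at least one vertex; so the Harary polynomials are not d.p.-equivalent. This is exactly where the paper itself ends up. The theorem you were asked to prove survives only in a discarded (unrevised) branch of the source, where its one-line ``proof'' --- that $G \in \cC$ iff $\chi_{\cC}(G;1) = 1$ --- establishes only the reduction $\cC \leq_{d.p.} \chi_{\cC}$, which is the wrong direction for the claim and nowhere near sufficient. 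The revised text replaces the theorem by Proposition \ref{prop:char-2}, an explicit counterexample: the complementary (hence d.p.-equivalent) properties ``disconnected'' and ``connected'' have Harary polynomials that are not d.p.-equivalent, since $\chi_{\cC_1}(K_i;X) = 0$ for every clique while $\chi_{\cC_2}(K_i;2) = 2^i - 2$ distinguishes cliques of different sizes. Your structural diagnosis --- that colorings may mix blocks from $\cC$ and $\bar{\cC}$, so there is no analogue of the complementation identity of Proposition \ref{prop:complement} --- is precisely the obstruction the paper flags when it says the relationship between $\chi_{\cC}$ and $\chi_{\bar{\cC}}$ is ``not at all obvious.''

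Two corrections to your proposed repairs. First, restricting to nontrivial properties $\cC$ does not rescue the statement: the paper's counterexample uses the nontrivial classes of connected and disconnected graphs, so the failure is not an artifact of the degenerate pair $(\cG, \emptyset)$. Second, the weakening to s.d.p.-equivalence that you suggest in parallel with Proposition \ref{th:char-1} is not available either --- the paper explicitly records the s.d.p. analogue of Proposition \ref{th:char-1} for Harary polynomials as an open problem. So the correct reading is the third option you did not list: the statement is simply false as stated, and a counterexample (yours, or the paper's Proposition \ref{prop:char-2}) is the definitive answer.
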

\begin{proof}
This follows from the fact that
$G \in \cC$ iff $\chi_{\cC}(G;1) =1$.
\end{proof}
\fi 

In contrast to Proposition \ref{prop:complement}
the relationship between $\chi_{\cC}(G;k)$ and $\chi_{\bar{\cC}}(G;k)$
is not  at all obvious.
What can we say about $\chi_{\bar{\cC}}(G;k)$ in terms of $\chi_{\cC}(G;k)$?

\begin{proposition}
\label{prop:char-2}
There are two classes $\cC_1$ and $\cC_2$ which are d.p.-equivalent but such that
$\chi_{\cC_1}$ and $\chi_{\cC_2}$ are not d.p.-equivalent.
\end{proposition}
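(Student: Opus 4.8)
The plan is to exploit Proposition~\ref{pr:dp-properties}(i): two graph properties are d.p.-equivalent precisely when they are equal or complementary. Since equal properties trivially yield identical Harary polynomials, the only way to separate $\chi_{\cC_1}$ from $\chi_{\cC_2}$ while keeping $\cC_1,\cC_2$ d.p.-equivalent is to take a complementary pair $\cC_2 = \bar{\cC_1}$. The remark preceding the statement already signals that the passage from $\chi_{\cC}$ to $\chi_{\bar{\cC}}$ is opaque, so one expects the distinctive powers to diverge for a suitably chosen $\cC$.

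I would take $\cC$ to be the class of edgeless graphs and $\cC_2 = \bar{\cC}$ the class of graphs having at least one edge. Here the first Harary polynomial is familiar: a color class induces an edgeless graph if and only if it is an independent set, so $\chi_{\cC}(G;k)$ is exactly the classical chromatic polynomial $\chi(G;k)$. In particular, on the edgeless graph $E_n$ on $n$ vertices we have $\chi_{\cC}(E_n;k) = k^n$, so $\chi_{\cC}$ separates $E_1$ from $E_2$.

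The key observation is that $\chi_{\bar{\cC}}$ collapses on edgeless graphs. For $n \geq 1$ the graph $E_n$ has no edges at all, so every nonempty color class induces an edgeless subgraph, which does not lie in $\bar{\cC}$; hence there is no admissible coloring and $\chi_{\bar{\cC}}(E_n;k) = 0$ for all $n \geq 1$. In particular $\chi_{\bar{\cC}}(E_1;k) = \chi_{\bar{\cC}}(E_2;k) = 0$, so $\chi_{\bar{\cC}}$ fails to distinguish $E_1$ and $E_2$, whereas $\chi_{\cC}$ does. By Theorem~\ref{th:equiv}(i) (equivalently, directly from the definition of $\leq_{d.p.}$) this shows $\chi_{\cC} \not\leq_{d.p.} \chi_{\bar{\cC}}$, so the two Harary polynomials are not d.p.-equivalent, while $\cC$ and $\bar{\cC}$ are d.p.-equivalent by Proposition~\ref{pr:dp-properties}(i). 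This proves the proposition with $\cC_1 = \cC$ and $\cC_2 = \bar{\cC}$.

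The only point demanding care is the bookkeeping of empty color classes and of the empty graph in the definition of $\chi_{\cC}$; but this never affects the argument, since the witnesses $E_1$ and $E_2$ each carry at least one nonempty, edge-free color class under any coloring, forcing $\chi_{\bar{\cC}}$ to vanish regardless of the convention adopted. Thus there is no genuine obstacle: the whole content lies in recognizing that $\bar{\cC}$ is vacuous on edgeless graphs, which sharply reduces the distinctive power of $\chi_{\bar{\cC}}$ relative to the chromatic polynomial.
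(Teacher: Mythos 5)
Your proof is correct and follows essentially the same strategy as the paper's: take a complementary pair of properties (d.p.-equivalent by Proposition~\ref{pr:dp-properties}(i)) and exhibit a family of graphs on which one Harary polynomial vanishes identically---because the defining property is unachievable there---while the other separates members of the family. The paper instantiates this with $\cC_1 = $ disconnected graphs, $\cC_2 = $ connected graphs, and cliques $K_i$ as witnesses (where $\chi_{\cC_1}(K_i;X)=0$ but $\chi_{\cC_2}(K_i;2)=2^i-2$), whereas you use edgeless versus non-edgeless graphs with the graphs $E_n$ as witnesses; both choices are equally valid, and your handling of the empty-color-class convention is a careful touch the paper does not need to address.
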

\begin{proof}
Let $\cC_1$ be all the disconnected graphs and
let $\cC_2$ be all the connected graphs.
As they are complements of each other, they are d.p.-equivalent.
\\
We compute for $K_i$:
$$
\chi_{\cC_1}(K_i; j) =0, j \in \N^+
$$
because there is no way to partition $K_i$ into any number of disconnected  parts.
Hence $\chi_{\cC_1}(K_i; X) =0$.
$$
\chi_{\cC_2}(K_i; 2) = 2^i -2
$$
because every partition of $K_i$ into two nonempty parts gives two connected graphs.
Therefore $\chi_{\cC_2}$ distinguishes between cliques of different size,
whereas $\chi_{\cC_1}$ does not.
\end{proof}

We note, however, that the analogue of Proposition \ref{th:char-1} for Harary polynomials
remains open.

\subsection{d.p.-equivalence of graph polynomials}
The converse of Theorem
\ref{th:char-1}(\ref{th:char-1.1}) and (\ref{th:char-1.2}) is not true:

\begin{proposition}
\label{prop:counterexample}
There are graph properties $\cC_1$ and $\cC_2$ which are not d.p.-equivalent,
but such that
\begin{enumerate}[(i)]
\item
$\bP_{\cC_1}^{ind}(G;X)$ and $\bP_{\cC_2}^{ind}(G;X)$
are d.p.-equivalent.
\item
$\chi_{\cC_1}(G;X)$ and $\chi_{\cC_2}(G;X)$
are d.p.-equivalent.
\end{enumerate}
\end{proposition}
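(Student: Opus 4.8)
The plan is to exhibit a single pair of graph properties $\cC_1,\cC_2$ that are \emph{not} d.p.-equivalent as properties, yet become d.p.-equivalent under both constructions. The guiding idea is to choose properties that depend only on the number of vertices $n(G)$: then $\bP_{\cC}^{ind}(G;X)$ and $\chi_{\cC}(G;k)$ each depend only on $n(G)$, and it suffices to check that each resulting polynomial is \emph{injective} as a function of $n$. Indeed, if a polynomial $p_n$ is injective in $n$ (distinct $n$ give distinct $p_n$), then $G \mapsto p_{n(G)}$ has exactly the distinctive power of the parameter $n(G)$; two such graph polynomials induce the same partition of graphs and are d.p.-equivalent, which one may also read off from Theorem \ref{th:equiv}(i) applied to the explicit invertible transformation relating them. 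Concretely I would take $\cC_1 = \cG$, the class of all finite graphs, and $\cC_2 = \{G : n(G) \leq 1\}$. By Proposition \ref{pr:dp-properties}(i) these are not d.p.-equivalent, since $\cC_1 \neq \cC_2$ (e.g.\ $K_2 \in \cC_1 \setminus \cC_2$) and $\cC_1 \neq \bar{\cC_2} = \{G : n(G) \geq 2\}$ (e.g.\ $K_1 \in \cC_1 \setminus \bar{\cC_2}$). Note that $\cC_2$ contains the null graph, which will matter in (ii).

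For (i): every induced subgraph lies in $\cC_1$, so $\bP_{\cC_1}^{ind}(G;X) = (1+X)^{n(G)}$, while the subsets $A$ with $G[A] \in \cC_2$ are exactly $\emptyset$ and the singletons, so $\bP_{\cC_2}^{ind}(G;X) = 1 + n(G)\,X$. Both are injective in $n(G)$ --- the first because $(1+X)^n$ has degree $n$, the second by reading off the coefficient of $X$ --- so both distinguish graphs exactly by their vertex count and are therefore d.p.-equivalent.

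For (ii): since any coloring is admissible when each color class may induce an arbitrary graph, $\chi_{\cC_1}(G;k) = k^{n(G)}$; admissibility for $\cC_2$ forces each color class to have at most one vertex, i.e.\ the coloring is injective, so $\chi_{\cC_2}(G;k) = k(k-1)\cdots(k-n(G)+1)$, the falling factorial. Here unused colors give empty classes, which are harmless precisely because the null graph lies in $\cC_2$. Each polynomial has degree $n(G)$ in $k$, hence is injective in $n(G)$, so again both distinguish exactly by vertex count and are d.p.-equivalent.

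The step I would treat most carefully --- and the only genuine subtlety --- is the convention for empty color classes in the Harary polynomial, which is exactly why I insist that $\cC_2$ contain the null graph; a careless choice such as $\{G : n(G) \geq 1\}$ would let unused colors spuriously invalidate colorings. It is worth recording why the argument cannot instead aim for literal equality of the polynomials: the induced-subgraph count functions $G \mapsto \#\{A : G[A] \cong H\}$ are linearly independent across isomorphism types, so $\bP_{\cC_1}^{ind} = \bP_{\cC_2}^{ind}$ for all $G$ would force $\cC_1 = \cC_2$. This is precisely why the proof must route through injectivity in $n(G)$ and d.p.-equivalence rather than through equality of polynomials.
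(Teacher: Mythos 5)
Your proof is correct, but it takes a cleaner route than the paper's own. The paper does not use a single pair: for (i) it takes $\cC_1=\{K_1\}$ and $\cC_2=\{K_2,E_2\}$, computing $\bP_{\cC_1}^{ind}(G;X)=n(G)\cdot X$ and $\bP_{\cC_2}^{ind}(G;X)=\binom{n(G)}{2}X^2$, while for (ii) it switches to $\cC_2=\{K_1,K_2,E_2\}$ --- a switch that is forced, since with the first choice $\chi_{\cC_2}(G;k)$ vanishes identically on all graphs of odd order (no partition into classes of size exactly two exists), so it fails to be d.p.-equivalent to $\chi_{\cC_1}$; and even after the switch, the paper needs a nontrivial argument recovering $n(G)$ from $\chi_{\cC_2}$ via the least $r$ with $\chi_{\cC_2}(G;r)>0$ together with a parity distinction between $n_{even}$ and $n_{odd}$ values. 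Your single pair $\cC_1=\cG$, $\cC_2=\{G : n(G)\leq 1\}$ satisfies (i) and (ii) simultaneously, which is what the literal statement of Proposition \ref{prop:counterexample} asks for, and your four polynomials $(1+X)^{n(G)}$, $1+n(G)X$, $k^{n(G)}$ and $k(k-1)\cdots(k-n(G)+1)$ are all transparently injective in $n(G)$, so each verification reduces to reading off a degree or a coefficient. The semantic core of both arguments is identical --- choose properties whose polynomials depend only on, and determine, $n(G)$, so that every polynomial in sight is d.p.-equivalent to the parameter $n(G)$ while the defining properties are not d.p.-equivalent as properties --- but your instantiation is more uniform and the computations are simpler. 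Your care about the null graph in the Harary count is also well placed: the convention for empty color classes is precisely the point at which a careless choice of $\cC_2$ breaks part (ii), as the paper's own first pair illustrates; and your closing remark (linear independence of induced-subgraph counts rules out literal equality of the generating functions) correctly explains why any proof must pass through d.p.-equivalence rather than coincidence of polynomials.
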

\begin{proof}
For (i)
Let $\cC_1 = \{K_1\}$ and $\cC_2 =\{K_2, E_2\}$
where $E_n$ is the graph on $n$ vertices and no edges.
We compute: 
\begin{gather}
\bP_{\cC_1}^{ind}(G;X) = n(G)\cdot X \notag \\
\bP_{\cC_2}^{ind}(G;X) =  {n(G) \choose 2} \cdot X^2. \notag
\end{gather}
\ifrevised
For (ii) we
choose $\cC_1=\{K_1\}$ as before, but $\cC_2=\{K_1,K_2,E_2\}$. 
\\
{\bf Claim 1}: \\
$\chi_{\cC_2}(G,X)\leq_{d.p.} n(G)$
Proof: Let $G_1$ and $G_2$ be two graphs with the same number of vertices. 
W.l.o.g. assume they have the same vertex set $V(G_1)=V(G_2)=V$.
Now notice for every $f:V \to [k]$, $f$ is a $\cC_2$-coloring of $G_1$ iff it is a 
$f$ is a $\cC_2$-coloring of $G_2$.
Hence $\chi_{\cC_2}(G_1,X)=\chi_{\cC_2}(G_2,X)$ whenever $G_1$ and $G_2$ have the same number of vertices. 
\\
{\bf Claim 2}: \\ $n(G) \leq_{d.p.} \chi_{\cC_2}(G,X)$
Proof: First denote for every $m$,
$n_{even}(m) = \prod_{i=0}^{m-1} \binom{2(m-i)}{2}$
and 
$n_{odd}(m) = \prod_{i=0}^{m-1} \binom{2(m-i)+1}{2}$.
For every graph $G$, there is a natural number $m(G)$ such that $n(G)=2m(G)$ or $n(G)=2m(G)+1$.
If $n(G)=2m(G)$, $\chi_{\cC_2}(G,m(G))=n_{even}(m(G))$.
If $n(G)=2m(G)+1$, $\chi_{\cC_2}(G,m(G))=n_{odd}(m(G))$.  
Note $n_{odd}(r) > n_{even}(r)$ for every natural number $r$.
The minimal natural number $r$ such that $\chi(G,r)>0$ is equal to $m(G)$. 
We get that the minimal $r$ such that $\chi_{\cC_2}(G,r)>0$ determines $n(G)$. 
Hence $\chi_{\cC_1} =_{d.p.} \chi_{\cC_2}$. 
\else
For (ii) we compute: 
\begin{gather}
\chi_{\cC_1}(G;X) =  {X \choose n}\cdot n! \notag \\
\chi_{\cC_2}(G;X) = 
\begin{cases}
\prod_{i=0}^m {2(m -i) \choose 2} \cdot {X \choose m} n!, & n(G)=2m  \\
0, & n(G) = 2m+1
\end{cases}
\notag 
\end{gather}
$\bP_{\cC_1}^{ind}(G;X)$ and $\bP_{\cC_2}^{ind}(G;X)$,
and $\chi_{\cC_1}(G;X)$ and $\chi_{\cC_2}(G;X)$,
are d.p.-equivalent, 
because  for both
two graphs $G_1, G_2$ get pairwise the same value of the polynomials iff $n(G_1)=n(G_2)$.
\fi 
\end{proof}
We leave it to the reader to construct the corresponding counterexample for
$\bP_{\cD}^{span}(G;X)$.

We cannot use Proposition \ref{prop:dp-incomparable} to show that there
infinitely many d.p.-incomparable graph polynomials of the form
$\bP_{\cC}^{ind}(G;X)$. However, we can construct explicitly 
infinitely many d.p.-incomparable graph polynomials of this form.

\section{Choosing the appropriate formalism for graph polynomials}
\label{se:many}
\subsection{Motivation}

In this section we show 
that, up to s.d.p.-equivalence, there are uncountably mutually incomparable
graph polynomials which are
generating functions of
counting induced or spanning subgraphs or Harary colorings of graph properties.
This suggests that the graph properties defining these polynomials have to be restricted.

However, although
many classical graph polynomials from the literature are either 
generating functions of
counting induced or spanning subgraphs or Harary colorings,
we show
that the characteristic and Laplacian polynomials are not of this form. 
This also holds for
other naturally defined
graph polynomials.
This suggest that our framework has to be extended.

In Section \ref{se:sol} we finally introduce the graph polynomials definable in Second Order Logic $\SOL$
as the suitable formalism.

\subsection{Many d.p.-inequivalent graph polynomials}

For the rest of this section,
let 
$C_i$ be the undirected cycle on $i$ vertices, and $C_i^*$ the graph which consists of a copy of $C_{i-1}$
together with a new vertex $v$ which is connected to exactly one of the vertices of $C_{i-1}$.
Clearly, $C_i$ and $C_i^*$ are similar.
Furthermore, let
$\cC_i = \{C_i\}$, and let $G_i^k$ consist of the disjoint union of $k$-many copies of $C_i$,
and let $\hat{G}_i^k$ consist of the disjoint union of $k-1$ copies of $C_i^*$ together
with one copy of $C_i$.
Again, $\hat{G}_i^k$ and $G_i^k$ are similar.

We compute:
\begin{lemma}
\label{incomp}
\begin{gather} 
\bP_{\cC_{j}}^{ind}(G_{i}^{k};X) = \bP_{\cC_{j}}^{ind}(\hat{G}_{i}^{k};X) = 0 \mbox{ for } i \neq j, i \neq j+1, \tag{i} \\ 
\bP_{\cC_{i}}^{ind}(G_{i}^{k};X) = k \cdot  X^{i} \tag{ii}  \\
\bP_{\cC_{i}}^{ind}(\hat{G}_{i}^{k};X) =  X^{i} \tag{iii}  
\end{gather}
\end{lemma}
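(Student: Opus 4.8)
The plan is to read each polynomial straight off the definition
$\bP_{\cC_j}^{ind}(G;X) = \sum_{A \subseteq V(G):\, G[A] \cong C_j} X^{|A|}$.
Since every vertex set $A$ with $G[A]\cong C_j$ has exactly $j$ vertices, this sum collapses to $N_j(G)\cdot X^j$, where $N_j(G)$ denotes the number of vertex subsets inducing a copy of $C_j$. Thus all three parts reduce to counting induced cycles of a prescribed length in $G_i^k$ and $\hat{G}_i^k$, and the whole argument is combinatorial bookkeeping once the right structural facts are in place.

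The key step I would establish first is a description of which induced subgraphs of the building blocks are cycles. Because $C_j$ (for $j\geq 3$) is connected, any $A$ with $G[A]\cong C_j$ is contained in a single connected component of $G$; hence the induced cycles of a disjoint union are exactly the induced cycles occurring inside one component. Next, the only induced cycle of a single cycle $C_m$ is $C_m$ itself: for a proper subset $A\subsetneq V(C_m)$ the graph $C_m[A]$ is a disjoint union of paths (deleting a vertex breaks the cycle into arcs) and hence contains a vertex of degree at most $1$, so it is not $2$-regular and cannot be a cycle. Finally, in $C_i^*$ the pendant vertex $v$ has degree $1$ and therefore lies on no induced cycle; every induced cycle of $C_i^*$ is consequently an induced cycle of its $C_{i-1}$ part, and by the previous point the unique such cycle is $C_{i-1}$ itself. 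So $C_i^*$ contains exactly one induced cycle, a copy of $C_{i-1}$, and no induced $C_i$.

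With these facts the three parts follow by counting. In $G_i^k$ the $k$ disjoint copies of $C_i$ contribute exactly $k$ induced copies of $C_i$ and no other induced cycle, giving $\bP_{\cC_i}^{ind}(G_i^k;X)=k\,X^i$ (part (ii)) and $\bP_{\cC_j}^{ind}(G_i^k;X)=0$ for every $j\neq i$. In $\hat{G}_i^k$ the single copy of $C_i$ contributes exactly one induced $C_i$, while each of the $k-1$ copies of $C_i^*$ contributes exactly one induced $C_{i-1}$ and nothing else; hence the only nonzero values are $\bP_{\cC_i}^{ind}(\hat{G}_i^k;X)=X^i$ (part (iii)) and $\bP_{\cC_{i-1}}^{ind}(\hat{G}_i^k;X)=(k-1)X^{i-1}$. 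Since $i=j+1$ is exactly the condition $j=i-1$, the two excluded values in part (i) are precisely $j\in\{i-1,\,i\}$, so for every remaining $j$ both polynomials vanish, which is part (i).

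The only genuinely delicate point is the structural analysis of induced cycles, in particular the off-by-one in $C_i^*$: it is assembled from $C_{i-1}$ rather than $C_i$, so its unique induced cycle has length $i-1$, and this is exactly the source of the second exceptional value $i=j+1$ in part (i). The disjoint-union reduction and the degree argument ruling out the pendant vertex are routine; I would only flag the standard convention $j\geq 3$ for cycles so as to sidestep degenerate small cases.
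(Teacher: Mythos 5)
Your proof is correct and is essentially the paper's own argument: the paper states this lemma with only the words ``We compute:'' and no written proof, and your structural analysis (induced cycles of a disjoint union lie inside single components, the only induced cycle of $C_m$ is $C_m$ itself, and the pendant vertex of $C_i^*$ lies on no cycle, so the unique induced cycle of $C_i^*$ is its $C_{i-1}$) supplies exactly the intended computation. Your identification of $j=i-1$ as the source of the exclusion $i\neq j+1$ in part (i) is also precisely what the paper relies on later in Theorem \ref{th:dp-incomparable}.
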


\begin{theorem}
\label{th:dp-incomparable}
For all $i,j$ with $i \neq j$ and $i \neq j+1$ the polynomials 
$\bP_{\cC_{i}}^{ind}$ and
$\bP_{\cC_{j}}^{ind}$ are d.p.-incomparable, hence
there are infinitely many d.p.-inequivalent graph polynomials of the form $\bP_{\cC}^{ind}(G;X)$.
\end{theorem}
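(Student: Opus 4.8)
The plan is to use Lemma \ref{incomp} to produce, for each pair $i \neq j$ with $i \neq j+1$, a pair of similar graphs on which one polynomial agrees while the other distinguishes them, and then invoke Theorem \ref{th:equiv}(i) to conclude d.p.-incomparability. The key observation is that Lemma \ref{incomp} already hands us exactly the test graphs we need: $G_i^k$ and $\hat G_i^k$ are similar, and the polynomial $\bP_{\cC_i}^{ind}$ separates them (values $k\cdot X^i$ versus $X^i$), whereas $\bP_{\cC_j}^{ind}$ collapses them to the same value ($0$, by part (i), provided $j \neq i$ and $j+1 \neq i$).

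First I would fix $i \neq j$ with $i \neq j+1$ and pick any $k \geq 2$, so that $k \cdot X^i \neq X^i$. To show $\bP_{\cC_i}^{ind} \not\leq_{d.p.} \bP_{\cC_j}^{ind}$, I would argue by contradiction via Theorem \ref{th:equiv}(i): if $\bP_{\cC_i}^{ind} \leq_{d.p.} \bP_{\cC_j}^{ind}$, there is a function $F$ with $\bP_{\cC_i}^{ind}(G;X) = F(\bP_{\cC_j}^{ind}(G;X))$ for every $G$. Applying this to $G_i^k$ and $\hat G_i^k$, and using that $\bP_{\cC_j}^{ind}(G_i^k;X) = \bP_{\cC_j}^{ind}(\hat G_i^k;X)$ by Lemma \ref{incomp}(i), gives $\bP_{\cC_i}^{ind}(G_i^k;X) = \bP_{\cC_i}^{ind}(\hat G_i^k;X)$, contradicting Lemma \ref{incomp}(ii)--(iii), which yield $k\cdot X^i \neq X^i$.

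For the reverse direction, $\bP_{\cC_j}^{ind} \not\leq_{d.p.} \bP_{\cC_i}^{ind}$, I would need an analogous pair of similar graphs separated by $\bP_{\cC_j}^{ind}$ but not by $\bP_{\cC_i}^{ind}$. By symmetry of the hypothesis (the condition ``$i \neq j$ and $i \neq j+1$'' is not itself symmetric, so some care is needed here), I would apply the same construction with the roles of $i$ and $j$ swapped: take $G_j^k$ and $\hat G_j^k$, on which $\bP_{\cC_j}^{ind}$ gives $k \cdot X^j$ versus $X^j$, and check that $\bP_{\cC_i}^{ind}$ vanishes on both. The latter requires $i \neq j$ and $i \neq j+1$, i.e. the index of the separating polynomial ($j$) must satisfy $i \notin \{j, j+1\}$ — which is precisely the stated hypothesis. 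The remaining subtlety is whether $i \neq j+1$ also guarantees $j \neq i+1$; since these are distinct conditions, I expect the clean symmetric statement holds only under the combined assumption, and I would verify that Lemma \ref{incomp}(i) with the pair $(G_j^k, \hat G_j^k)$ indeed applies, namely that $\bP_{\cC_i}^{ind}(G_j^k) = \bP_{\cC_i}^{ind}(\hat G_j^k) = 0$ needs $i \neq j$ and $i \neq j+1$, matching the hypothesis exactly.

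The main obstacle I anticipate is the asymmetry in the index condition: the lemma's vanishing statement is stated for $\bP_{\cC_j}^{ind}$ on $G_i^k, \hat G_i^k$ under ``$i \neq j$, $i \neq j+1$,'' and getting both directions of incomparability requires this to apply with indices swapped. I would resolve this by checking that the single hypothesis $i \neq j \wedge i \neq j+1$ suffices for the direction I prove directly, and that the \emph{other} direction only needs the (weaker, symmetric) fact $i \neq j$ together with the observation that $\bP_{\cC_i}^{ind}$ vanishes on cycles of the wrong length — a direct computation from the definition of $\bP_{\cC}^{ind}$ rather than a literal reuse of Lemma \ref{incomp}(i). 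Once both directions are in hand, the final clause of the theorem (infinitely many d.p.-inequivalent such polynomials) follows immediately: the family $\{\bP_{\cC_i}^{ind} : i \geq 3\}$ contains infinitely many pairwise d.p.-incomparable members, since for any two indices one can always shift to a third index avoiding the excluded cases, yielding an infinite antichain.
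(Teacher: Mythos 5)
Your strategy is the same as the paper's: use the test pairs built from cycles in Lemma \ref{incomp} together with the characterization of $\leq_{d.p.}$ in Theorem \ref{th:equiv}(i). Your first direction, $\bP_{\cC_i}^{ind} \not\leq_{d.p.} \bP_{\cC_j}^{ind}$ via the pair $(G_i^k,\hat{G}_i^k)$ with $k\geq 2$, is correct, and the vanishing of $\bP_{\cC_j}^{ind}$ on that pair needs exactly the hypothesis $i\neq j$, $i\neq j+1$. (Your direction-to-pair assignment is in fact the right one; the paper's write-up attaches the pairs to the two directions with the labels interchanged.)

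The gap is in the reverse direction, and you half-see it but do not repair it. Your second paragraph claims that $\bP_{\cC_i}^{ind}$ vanishing on both $G_j^k$ and $\hat{G}_j^k$ ``requires $i\neq j$ and $i\neq j+1$,'' matching the hypothesis. That is not the right condition: $\hat{G}_j^k$ contains $k-1$ copies of $C_j^*$, and $C_j^*$ has an induced cycle $C_{j-1}$, so $\bP_{\cC_i}^{ind}(\hat{G}_j^k;X)=(k-1)X^i\neq 0$ whenever $i=j-1$. The vanishing therefore requires $i\neq j$ \emph{and} $j\neq i+1$, and the case $j=i+1$ (e.g.\ $i=4$, $j=5$) is permitted by the theorem's hypothesis. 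Your third paragraph proposes to keep this pair and justify the vanishing ``by direct computation''; that cannot work, since for $j=i+1$ the vanishing claim is simply false --- indeed $\bP_{\cC_i}^{ind}$ then \emph{distinguishes} $G_j^k$ from $\hat{G}_j^k$. The repair must change the test pair: take $G_j^1$ and $G_j^2$ (one versus two disjoint copies of $C_j$). Both are disjoint unions of $j$-cycles, so $\bP_{\cC_i}^{ind}$ vanishes on both as soon as $i\neq j$, while $\bP_{\cC_j}^{ind}$ takes the distinct values $X^j$ and $2X^j$; d.p.-comparison does not require similarity, so this pair is legitimate and yields $\bP_{\cC_j}^{ind}\not\leq_{d.p.}\bP_{\cC_i}^{ind}$ from $i\neq j$ alone. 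For what it is worth, the paper's own proof cites Lemma \ref{incomp}(i) for $\bP_{\cC_i}^{ind}$ on $(G_j^2,\hat{G}_j^2)$ and thus has the same blind spot at $j=i+1$, so your suspicion that the asymmetric hypothesis hides a problem was well founded. Once both directions are fixed, the ``infinitely many'' clause is immediate, e.g.\ from the subfamily of indices $3,5,7,\ldots$, any two of which differ by at least $2$.
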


\begin{proof}
Assume $i,j \geq 3$ with $i \neq j$ and $i \neq j+1$.
We first prove 
$\bP_{\cC_{i}}^{ind} \not <_{d.p.} \bP_{\cC_{j}}^{ind}$ for $i \neq j$ and $i \neq j+1$.
\\
We look at the graphs $G_j^2$ and $\hat{G}_j^2$.
$\bP_{\cC_{j}}^{ind}(G_{j}^{2};X) = 2 \cdot X^i$ by Lemma \ref{incomp}(ii).
$\bP_{\cC_{j}}^{ind}(\hat{G}_{j}^{2};X) = X^{i}$ by Lemma \ref{incomp}(iii).
Hence,
$\bP_{\cC_{j}}^{ind}$ distinguishes between the two graphs $G_j^2$ and $\hat{G}_j^2$.
However,
$\bP_{\cC_{i}}^{ind}(G_{j}^{2};X) = 
\bP_{\cC_{i}}^{ind}(\hat{G}_{j}^{2};X) = 0$, by Lemma \ref{incomp}(i).
Hence,
$\bP_{\cC_{i}}^{ind}$ does not distinguish between the two graphs.

To prove
$\bP_{\cC_{j}}^{ind} \not <_{d.p.} \bP_{\cC_{i}}^{ind}$ for $j \neq i$ and $j \neq i+1$,
we look at the graphs $G_i^2$ and $\hat{G}_i^2$.
In this case 
$\bP_{\cC_{j}}^{ind}$ does not distinguish between the two graphs $G_i^2$ and $\hat{G}_i^2$,
but $\bP_{\cC_{i}}^{ind}$ does.
\end{proof}

\begin{theorem}
\label{th:dp-incomparable-span}
There are infinitely many d.p.-inequivalent graph polynomials of the form $\bP_{\cC}^{span}(G;X)$.
\end{theorem}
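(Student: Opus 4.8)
The plan is to mirror the proof of Theorem \ref{th:dp-incomparable} for the induced-subgraph case, replacing the roles played by induced subgraphs with spanning subgraphs and choosing a family of connected graphs whose distinct edge-counts separate the relevant polynomials. The key observation is that $\bP_{\cC}^{span}(G;X)$ counts spanning subgraphs $G\langle B\rangle$ that are isomorphic (as graphs on the edge set $B$, after taking the vertex set they touch) to a member of $\cC$, so if $\cC_i$ consists of a single connected graph with exactly $i$ edges, then the coefficient of $X^i$ in $\bP_{\cC_i}^{span}(G;X)$ literally counts the number of occurrences of that graph as a spanning subgraph supported on $i$ edges. For this to work I need the property $\cC$ to be closed under adding and removing isolated vertices (as required by the definition of $\bP_{\cD}^{span}$), so the appropriate choice is to let $\cC_i$ be the class of all graphs whose non-isolated part is isomorphic to a fixed connected graph $H_i$ with $i$ edges — for concreteness I would take $H_i$ to be the cycle $C_i$, whose edge count equals its vertex count.

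First I would fix, for each $i\ge 3$, the property $\cC_i = \{\, G : \text{the graph induced on the non-isolated vertices of } G \text{ is isomorphic to } C_i\,\}$, which is closed under adding and removing isolated vertices by construction. Then I would build test graphs analogous to $G_i^k$ and $\hat{G}_i^k$: let $G_i^k$ be the disjoint union of $k$ copies of $C_i$ and let $\hat{G}_i^k$ be the disjoint union of $k-1$ copies of $C_i^*$ together with one copy of $C_i$, exactly as in the setup preceding Lemma \ref{incomp}. The crucial computation is a spanning-subgraph analogue of Lemma \ref{incomp}: a spanning subgraph $G\langle B\rangle$ lies in $\cC_i$ precisely when the edge set $B$ is the full edge set of one of the cycle-components and nothing else, so $\bP_{\cC_j}^{span}(G_i^k;X)=\bP_{\cC_j}^{span}(\hat{G}_i^k;X)=0$ for $j\ne i$, while $\bP_{\cC_i}^{span}(G_i^k;X)=k\cdot X^i$ and $\bP_{\cC_i}^{span}(\hat{G}_i^k;X)=X^i$ (the appended pendant vertex in $C_i^*$ destroys the cycle on that component, so only the single clean copy of $C_i$ contributes). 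With these identities the incomparability argument is identical to the one in Theorem \ref{th:dp-incomparable}: on the pair $G_j^2,\hat{G}_j^2$ the polynomial $\bP_{\cC_j}^{span}$ separates them but $\bP_{\cC_i}^{span}$ does not, and symmetrically on $G_i^2,\hat{G}_i^2$, yielding mutual d.p.-incomparability for $i\ne j$, $i\ne j+1$.

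The step I expect to require the most care is the spanning-subgraph analogue of Lemma \ref{incomp}, namely verifying that no spanning subgraph of $C_i^*$ supported on a nontrivial edge set is isomorphic, after stripping isolated vertices, to $C_i$. Because $C_i^*$ has $i$ edges arranged as a $C_{i-1}$ with a pendant edge, one must check that the only subsets $B$ of its edges giving a member of $\cC_i$ would need to realize a $C_i$, which is impossible since deleting the pendant edge leaves $C_{i-1}$ and including it produces a vertex of degree one — neither is a cycle on $i$ vertices. This case analysis is what guarantees the coefficient $1$ rather than $0$ or $2$ in item (iii). Once that local computation is settled, the rest reduces to the already-proved template. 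I would therefore present the proof as: state the spanning analogue of Lemma \ref{incomp}, justify it by the degree/component argument just sketched, and then invoke the verbatim distinguishing argument from Theorem \ref{th:dp-incomparable} to conclude that there are infinitely many pairwise d.p.-incomparable, hence d.p.-inequivalent, graph polynomials of the form $\bP_{\cC}^{span}(G;X)$.
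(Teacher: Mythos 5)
Your construction is in essence identical to the paper's: the paper also takes $\cD_i = \{C_i \sqcup E_j : j \in \N\}$ (your ``cycle plus isolated vertices'' classes), the same test graphs $G_i^k$ and $\hat{G}_i^k$, and the same incomparability template. However, one of your stated identities is false, and it is exactly the case the paper's proof is careful to record. You claim $\bP_{\cC_j}^{span}(\hat{G}_i^k;X)=0$ for all $j\ne i$. This fails for $j=i-1$: taking $B$ to be the $i-1$ cycle edges of any copy of $C_i^*$ yields a spanning subgraph consisting of $C_{i-1}$ together with isolated vertices, which lies in $\cC_{i-1}$, so in fact $\bP_{\cC_{i-1}}^{span}(\hat{G}_i^k;X)=(k-1)\cdot X^{i-1}$. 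The paper's proof lists precisely this exceptional value ($(k-1)X^j$ when $i=j+1$). Ironically, your own third paragraph observes that deleting the pendant edge of $C_i^*$ leaves $C_{i-1}$; that observation is the source of this nonzero contribution and contradicts your blanket vanishing claim, yet you only use it to justify the coefficient $1$ in $\bP_{\cC_i}^{span}(\hat{G}_i^k;X)=X^i$.

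The slip propagates to your conclusion: ``mutual d.p.-incomparability for $i\ne j$, $i\ne j+1$'' is false as stated when $i=j-1$ (a pair your hypotheses allow). For such a pair the test graphs $G_j^2,\hat{G}_j^2$ are separated not only by $\bP_{\cC_j}^{span}$ but also by $\bP_{\cC_i}^{span}$ (values $0$ versus $X^i$), so that direction of your argument collapses; only the direction using $G_i^2,\hat{G}_i^2$ survives. The repair is exactly what the paper does: record the exceptional case, assert the vanishing only for $j\notin\{i,i-1\}$, and claim mutual incomparability only when $|i-j|\ge 2$ (incomparability is symmetric in $i$ and $j$, so the hypotheses must be as well). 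The theorem itself is unharmed: restricting to indices $3,5,7,\dots$, any two of which differ by at least $2$, still produces infinitely many pairwise d.p.-incomparable, hence d.p.-inequivalent, polynomials of the form $\bP_{\cC}^{span}(G;X)$.
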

\begin{proof}
The proof mimics the proof of Theorem \ref{th:dp-incomparable} with following changes:
Instead of $\cC_i$ we use $\cD_i = \{ C_i \sqcup E_j: j \in \N \}$ 
and
\begin{gather}
\bP_{\cD_{j}}^{span}(G_{i}^{k};X) = 0 \mbox{ for } i \neq j, i \neq j+1, \notag \\
\bP_{\cD_{i}}^{span}(G_{i}^{k};X) = k \cdot X^{i}. \notag \\
\bP_{\cD_{j}}^{span}(\hat{G}_{i}^{k};X) = 
\begin{cases}
0 &  i \neq j, i \neq j+1 \\
(k-1) \cdot X^j & i=j+1
\end{cases}
\notag \\
\bP_{\cD_{i}}^{span}(\hat{G}_{i}^{k};X) =  X^{i}. \notag 
\end{gather}
\end{proof}

Next we look at Harary polynomials $\chi_i(G;X) = \chi_{\cC_i}(G;X)$.
We use the following obvious lemma:
\begin{lemma}
\label{le:chrom}
\begin{enumerate}[(i)]
\item
For $X = \lambda \in \N$:
$$
\chi_i(G_i^k;\lambda) =
\begin{cases}
\lambda_{(k)} &  \lambda \geq k \\
0 & \mbox{ else }
\end{cases}
$$
\item
$$
\chi_j(G_i^k, \lambda) = 0 
$$
provided that $i \neq j$.
\item
$$
\chi_j(\hat{G}_i^k, \lambda) = 0 
$$
provided that $k \geq 2$ or  $k=1, i \neq j$.
\end{enumerate}
\end{lemma}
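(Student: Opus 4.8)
The plan is to reduce all three parts to two elementary structural observations about disjoint unions of cycles, and then count.

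First I would record the fact that governs everything: in a disjoint union of cycles $C_i$, the only vertex sets $A$ whose induced subgraph is isomorphic to a cycle $C_j$ are the full vertex sets of the individual $C_i$-components, and these occur only when $j=i$. Indeed, a nonempty proper subset of the vertices of a single $C_i$ induces a disjoint union of paths, which is never a cycle, while a set meeting two or more components induces a disconnected graph, which is never a connected cycle. This immediately gives (ii): for $j \neq i$ the graph $G_i^k$ has no induced $C_j$ at all, so no color class can induce $C_j$, yet every vertex must be colored; hence $\chi_j(G_i^k;\lambda)=0$.

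For (i) I would use the same observation with $j=i$: in any valid $\chi_i$-coloring of $G_i^k$ every nonempty color class induces a $C_i$, hence is exactly one of the $k$ cycle components, and two distinct components cannot share a color (their union induces a disjoint union of two cycles, not a single $C_i$). A valid coloring is therefore precisely an injective assignment of the $k$ components into the $\lambda$ available colors, of which there are $\lambda(\lambda-1)\cdots(\lambda-k+1)=\lambda_{(k)}$ when $\lambda \geq k$ and none otherwise. Unused colors give empty classes, which are permitted.

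For (iii) I would split into the two stated cases. When $k=1$ we have $\hat{G}_i^1 = C_i$, so for $i \neq j$ the argument of (ii) applies verbatim and the count is $0$. When $k \geq 2$, the graph $\hat{G}_i^k$ contains at least one copy of $C_i^*$, and the decisive feature is its pendant vertex $v$: since $v$ is joined to exactly one vertex of $C_{i-1}$, it has degree $1$ in the whole graph, so its degree in any induced subgraph is at most $1$. Every vertex of a cycle $C_j$ has degree exactly $2$, so the color class containing $v$ can never induce a $C_j$; hence no valid coloring exists and $\chi_j(\hat{G}_i^k;\lambda)=0$. The statement is essentially routine once these two observations are isolated; the only points requiring care are the complete case analysis in the structural fact (proper subset versus a set meeting several components) and the exclusion of the case $k=1,\,i=j$ in (iii), where $\hat{G}_i^1=C_i=C_j$ and the count is $\lambda_{(1)}=\lambda \neq 0$ by (i).
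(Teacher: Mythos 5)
Your proof is correct, and it matches the paper's intent: the paper states Lemma \ref{le:chrom} without proof (introducing it as an ``obvious lemma''), and your two structural observations --- that the only induced cycles in a disjoint union of cycles are the full components, and that the pendant vertex of $C_i^*$ has degree at most $1$ in any induced subgraph and so can never lie in a class inducing a cycle --- together with the injective-assignment count giving $\lambda_{(k)}$, are exactly the routine verification the authors leave to the reader. Your closing remark on why the case $k=1,\ i=j$ must be excluded in (iii) is also a correct and worthwhile addition.
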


\begin{theorem}
\label{th:dp-incomparable-chrom}
For all $i \neq j$ the polynomials 
$\chi_i$ and $\chi_j$
are d.p.-incomparable, hence
there are infinitely many d.p.-incomparable graph polynomials of the form $\chi_{\cC}$.
\end{theorem}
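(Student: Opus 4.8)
The plan is to mirror the proof of Theorem~\ref{th:dp-incomparable}, now using the similar pair $G_i^k, \hat{G}_i^k$ together with Lemma~\ref{le:chrom}, and taking $k=2$ throughout. First I would record the polynomial identity that Lemma~\ref{le:chrom} yields. Since a univariate graph polynomial is determined by its values at the natural numbers, part~(i) of the lemma identifies $\chi_i(G_i^k;X)$ completely: the values $\lambda_{(k)}$ for $\lambda\geq k$ (and $0$ for $\lambda<k$) coincide with the falling factorial $X_{(k)}=X(X-1)\cdots(X-k+1)$ at infinitely many points, so $\chi_i(G_i^k;X)=X_{(k)}$, which is a nonzero polynomial.

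To establish $\chi_i \not\leq_{d.p.} \chi_j$ when $i\neq j$, I would take the witness graphs $G_i^2$ and $\hat{G}_i^2$, which are similar. On this pair $\chi_j$ cannot tell the two apart: Lemma~\ref{le:chrom}(ii) gives $\chi_j(G_i^2;X)=0$ because $i\neq j$, and Lemma~\ref{le:chrom}(iii) gives $\chi_j(\hat{G}_i^2;X)=0$ because $k=2\geq 2$. By contrast $\chi_i$ does separate them: by the identity above $\chi_i(G_i^2;X)=X_{(2)}=X(X-1)\neq 0$, whereas Lemma~\ref{le:chrom}(iii) applied with $k=2$ gives $\chi_i(\hat{G}_i^2;X)=0$. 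Thus $\chi_j(G_i^2)=\chi_j(\hat{G}_i^2)$ while $\chi_i(G_i^2)\neq\chi_i(\hat{G}_i^2)$, which by the definition of $\leq_{d.p.}$ refutes $\chi_i\leq_{d.p.}\chi_j$.

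The opposite reduction $\chi_j\not\leq_{d.p.}\chi_i$ follows by the identical argument with the roles of $i$ and $j$ exchanged, using the similar pair $G_j^2, \hat{G}_j^2$; the hypothesis $i\neq j$ is symmetric, so the three parts of Lemma~\ref{le:chrom} apply verbatim. Combining the two directions gives d.p.-incomparability of $\chi_i$ and $\chi_j$ for every $i\neq j$. As this holds across the whole infinite family $\{\chi_i : i\geq 3\}$, that family forms an infinite antichain for $\leq_{d.p.}$, yielding infinitely many pairwise d.p.-incomparable graph polynomials of the form $\chi_{\cC}$.

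I expect no genuine obstacle, since Lemma~\ref{le:chrom} already carries the combinatorial content; the only points needing a word of care are the reading of the tabulated integer values of part~(i) as the polynomial identity $\chi_i(G_i^k;X)=X_{(k)}$, and the verification that for the chosen witnesses the relevant case hypotheses of the lemma ($i\neq j$ for part~(ii), and $k\geq 2$ for part~(iii)) are indeed met.
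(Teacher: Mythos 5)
Your proposal is correct and follows essentially the same route as the paper's own proof: both use Lemma \ref{le:chrom} on the witness pairs $G_i^2,\hat{G}_i^2$ and $G_j^2,\hat{G}_j^2$ to refute each reduction in turn. The only difference is that you spell out the details the paper leaves implicit (in particular the identity $\chi_i(G_i^k;X)=X_{(k)}$, obtained by reading part (i) of the lemma at infinitely many integer points), which also quietly fixes the paper's typo where it writes ``$G_j^1$ and $G_j^2$'' instead of $G_j^2$ and $\hat{G}_j^2$.
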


\begin{proof}
$\chi_i \not \leq_{d.p.} \chi_j$:
\\
We look at the graphs $G_i^2$ and $\hat{G}_i^2$.
By Lemma \ref{le:chrom} $\chi_j$ does not distinguish between $G_i^2$ and $\hat{G}_i^2$.
However, $\chi_i$ distinguishes between them.

To show that
$\chi_j \not \leq_{d.p.} \chi_i$,
we look at the graphs $G_j^2$ and $\hat{G}_j^2$.
By Lemma \ref{le:chrom} $\chi_i$ does not distinguish between $G_j^1$ and $G_j^2$.
However, $\chi_j$ does distinguish between them.
\end{proof}

\subsection{Generating functions of a relation}
\label{se:relation}
If, instead of counting induced (spanning) subgraphs with a certain graph property $\cC$ ($\cD$),
we count $r$-ary relations with a property $\Phi(A)$, we get a generalization of both
the generating functions of induced (spanning) subgraphs.
Here the summation is
defined by
$$
\bP_{\Phi}(G;X) = \sum_{A \subseteq E(G): \Phi(A)} X^{|A|}.
$$

For example,
the generating matching polynomial, defined as
$$
m(G;X) = \sum_{A \subseteq E(G): \Phi_{match}(A)} X^{|A|}
$$
can be written as
$$
m(G;X) = \sum_{A \subseteq E(G): G\langle A \rangle \in \cD_{match}} X^{|A|}
$$
with  $\cD_{match}$ being the disjoint union of isolated vertices and isolated edges.

However, not every graph polynomial
$\bP_{\Phi}(G;X)$
can be written as a generating function of induced (spanning) subgraphs.

Consider the graph polynomial
$$
DOM(G;X)  = \sum_{A \subseteq V(G): \Phi_{dom}(A)} X^{|A|},
$$ 
where $\Phi_{dom}(A)$ says that $A$ is a dominating set of $G$.

We compute:
\begin{gather}
DOM(K_2,;X) = 2X +X^2  \label{dom1},\\
DOM(E_2,;X) = X^2 \label{dom2}.
\end{gather}

\begin{theorem}
\label{th:dominating}
\label{th:dom-not-gen}
\begin{enumerate}[(i)]
\item
There is no graph property $\cC$ such that
$$
DOM(G;X) = \bP_{\cC}^{ind}(G;X).
$$
\item
There is no graph property $\cD$ such that
$$
DOM(G;X) = \bP_{\cD}^{span}(G;X).
$$
\end{enumerate}
\end{theorem}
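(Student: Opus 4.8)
The plan is to prove both parts by exhibiting small graphs on which the proposed right-hand side is provably unable to reproduce the already-computed values of $DOM$. For this I will reuse the computations $DOM(K_2;X) = 2X + X^2$ and $DOM(E_2;X) = X^2$ from equations (\ref{dom1}) and (\ref{dom2}). The guiding principle is that a generating function of induced (respectively spanning) subgraphs is heavily constrained: its coefficients are isomorphism-invariant counts, and its degree is bounded by $m(G)$. These constraints are absent from $DOM$, and the resulting gap yields the contradiction.

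For part (i), I would focus on the coefficient of $X^1$. In $\bP_{\cC}^{ind}(G;X)$ this coefficient counts the single-vertex subsets $\{v\}$ with $G[\{v\}] \in \cC$. Since every single vertex induces a copy of $K_1$ regardless of $G$, this coefficient equals $n(G)$ when $K_1 \in \cC$ and $0$ otherwise; in particular it depends only on $n(G)$ and on the fixed membership status of $K_1$ in $\cC$. Because $n(K_2) = n(E_2) = 2$, for every graph property $\cC$ the linear coefficients of $\bP_{\cC}^{ind}(K_2;X)$ and $\bP_{\cC}^{ind}(E_2;X)$ must coincide. But the linear coefficients of $DOM(K_2;X)$ and $DOM(E_2;X)$ are $2$ and $0$ respectively, so no $\cC$ can realize both equalities simultaneously.

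For part (ii), I would argue by degree. The polynomial $\bP_{\cD}^{span}(G;X)$ is a sum over edge subsets $B \subseteq E(G)$, so its degree in $X$ is at most $m(G)$. On the other hand, the full vertex set $V(G)$ is always a dominating set, contributing the monomial $X^{n(G)}$, so $DOM(G;X)$ has degree exactly $n(G)$. Taking $G = E_2$, which has $m(E_2) = 0$ but $n(E_2) = 2$, the only available edge subset is $\emptyset$, forcing $\bP_{\cD}^{span}(E_2;X)$ to be the constant $0$ or $1$, whereas $DOM(E_2;X) = X^2$. Hence no $\cD$ works.

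The arguments are short once the right invariant is isolated, so the only real obstacle is conceptual rather than computational: one must recognize that the admissible right-hand sides are not arbitrary polynomials but are constrained, by isomorphism-invariance of the coefficient counts in the induced case and by the edge-count degree bound in the spanning case. A naive attempt to match the two polynomials coefficient-by-coefficient across all graphs would be unwieldy; the key simplification is to pin down a single feature, the linear coefficient in (i) and the degree in (ii), that $DOM$ violates on the minimal witnesses $K_2$ and $E_2$.
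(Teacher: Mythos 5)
Your proof is correct and follows essentially the same route as the paper: your part (i) is the paper's argument (a case analysis on whether $K_1 \in \cC$, using the linear coefficients $2$ and $0$ of $DOM(K_2;X)$ and $DOM(E_2;X)$), merely repackaged as the observation that the linear coefficient of $\bP_{\cC}^{ind}$ depends only on $n(G)$ and the membership status of $K_1$. For part (ii) you take the witness $E_2$ and a degree bound ($\deg \bP_{\cD}^{span}(E_2;X) \leq m(E_2) = 0$ versus $DOM(E_2;X)=X^2$), while the paper takes $K_2$ and bounds the coefficient of $X$ by $m(K_2)=1$ against the value $2$ in $DOM(K_2;X)$; these are interchangeable one-line counting contradictions, both exploiting that spanning generating functions are controlled by $m(G)$ rather than $n(G)$.
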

\begin{proof}
(i):
Assume, for contradiction, there is such a $\cC$, and that
$K_1 \in \cC$.
The coefficient of $X$ in 
$\bP_{\cC}^{ind}(E_2;X)$ is $2$ because $K_1 \in \cC$. 
However, the coefficient of $X$ in $DOM(E_2;X)$ is $0$, by equation
(\ref{dom2}), a contradiction.

Now, assume $K_1 \not \in \cC$.
The coefficient of $X$ in 
$\bP_{\cC}^{ind}(K_2;X)$ is $0$, because $K_1 \not \in \cC$.
However, the coefficient of $X$ in $DOM(K_2;X)$ is  $2$, by equation
(\ref{dom1}), another contradiction.

(ii):
Assume, for contradiction, there is such a $\cD$.
The coefficient of $X$ in 
$\bP_{\cD}^{span}(K_2;X)$ is  $\leq 1$, because $K_2$ has only one edge. 
However, the coefficient of $X$ in $DOM(K_2;X)$ is  $2$, by equation
(\ref{dom1}), a contradiction.
\end{proof}

We can use Equation (\ref{dom1}) also to show the following:

\begin{theorem}
\label{th:notchrom}
\label{th:dom-not-chrom}
There is no graph property $\cC$ such that
$$
DOM(G;X) = \chi_{\cC}(G;X).
$$
\end{theorem}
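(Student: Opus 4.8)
The plan is to test the claimed identity on the single graph $K_2$, exactly in the spirit of the proof of Theorem~\ref{th:dom-not-gen}, exploiting that equation~(\ref{dom1}) gives $DOM(K_2;X) = X^2 + 2X$. First I would write down the general shape of an arbitrary Harary polynomial evaluated on $K_2$ by enumerating, for a fixed number $k$ of available colors, all admissible colorings of its two (adjacent) vertices. A coloring either assigns both vertices the same color, in which case the unique nonempty color class induces $K_2$ and the coloring is admissible precisely when $K_2 \in \cC$, or it assigns them distinct colors, in which case each of the two nonempty color classes induces $K_1$ and the coloring is admissible precisely when $K_1 \in \cC$.

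Counting the two possibilities (there are $k$ monochromatic colorings and $k(k-1)$ dichromatic ones) gives
$$
\chi_{\cC}(K_2;k) \;=\; [K_2 \in \cC]\cdot k \;+\; [K_1 \in \cC]\cdot k(k-1),
$$
where $[\,\cdot\,]$ denotes the indicator equal to $1$ when the graph lies in $\cC$ and $0$ otherwise. Reading this as a polynomial in the indeterminate $X$ yields
$$
\chi_{\cC}(K_2;X) \;=\; [K_1 \in \cC]\, X^2 \;+\; \bigl([K_2 \in \cC]-[K_1 \in \cC]\bigr)\, X .
$$

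The decisive observation is then that the linear coefficient $[K_2 \in \cC]-[K_1 \in \cC]$ is a difference of two $\{0,1\}$-valued quantities, so it necessarily lies in $\{-1,0,1\}$, regardless of which graph property $\cC$ is chosen. By equation~(\ref{dom1}) the linear coefficient of $DOM(K_2;X)$ equals $2$, which is outside this range. Hence $DOM(K_2;X)$ cannot agree with $\chi_{\cC}(K_2;X)$ for any $\cC$, and a fortiori no $\cC$ can satisfy the claimed identity for all graphs.

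I do not expect a genuine obstacle here: the only point needing care is the bookkeeping convention for empty color classes, and I would state explicitly that only the nonempty color classes are required to induce a subgraph in $\cC$, so that the enumeration above is exhaustive. If one prefers to mirror the explicit case analysis of Theorem~\ref{th:dom-not-gen}, the same conclusion follows by first forcing $K_1 \in \cC$ from the $X^2$-coefficient (otherwise that coefficient is $0 \neq 1$) and then deriving the impossible requirement $[K_2 \in \cC] = 3$ from the $X$-coefficient.
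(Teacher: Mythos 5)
Your proof is correct, and it rests on the same two ingredients as the paper's own proof --- the single test graph $K_2$ and the evaluation $DOM(K_2;X)=X^2+2X$ from equation (\ref{dom1}) --- but the mechanism for extracting the contradiction is genuinely different. The paper never computes $\chi_{\cC}(K_2;X)$ as a polynomial: it evaluates both sides at $X=1$, using only the observation that $\chi_{\cC}(G;1)=1$ if $G\in\cC$ and $\chi_{\cC}(G;1)=0$ otherwise, so that $\chi_{\cC}(K_2;1)\in\{0,1\}$ can never equal $DOM(K_2;1)=3$, in either of the two cases $K_2\in\cC$ or $K_2\notin\cC$. You instead enumerate all $\cC$-colorings of $K_2$ for every number of colors $k$, obtain the closed form $\chi_{\cC}(K_2;X)=[K_1\in\cC]\,X^2+\bigl([K_2\in\cC]-[K_1\in\cC]\bigr)X$, and compare coefficients, noting that the linear coefficient lies in $\{-1,0,1\}$ while $DOM$ requires it to be $2$. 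What the paper's route buys is brevity and immunity to the bookkeeping issue you rightly flag: with a single color there are no empty color classes, so no convention about them is needed, whereas your enumeration for $k\ge 2$ does depend on the convention (the standard one, and the one needed for the Makowsky--Zilber polynomiality theorem quoted in the paper) that only nonempty color classes must induce graphs in $\cC$. What your route buys is more information: a complete description of all possible Harary polynomials on $K_2$, from which non-representability of $DOM$ is read off coefficient-wise rather than at a single evaluation point, and which would let you rule out other candidate polynomials on $K_2$ by the same token.
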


\begin{proof}
First we note that $\chi_{\cC}(G;1)=1$ iff $\chi_{\cC}(G;1) \neq 0$ iff $G \in \cC$.
\\
Assume that $K_2 \in \cC$.  Then we have, using Equation (\ref{dom1}),
$$
\chi_{\cC}(K_2;1) = 1 = DOM(K_2,1) =3,
$$
a contradiction.
\\
Assume that $K_2  \not \in \cC$. Then we have, using Equation (\ref{dom1}),
$$
\chi_{\cC}(K_2;1) = 0 = DOM(K_2,1) =3,
$$
another contradiction.
\end{proof}

\ifskip\else
\subsection{Determinant polynomials}
There are only two matrices associated with graphs which have been used to define
graph polynomials: the adjacency matrix and the Laplacian.
The two resulting determinant polynomials are d.p.-incomparable.
It is conceivable to to define other matrix presentations of graphs,
and ask when they give rise to d.p.-equivalent determinant polynomials. 
The characterization and recognition problem in this case amounts to the question
when the characteristic polynomial of a matrix is the
the characteristic polynomial arising from a graph.
However, in this paper we do not pursue this further.

\subsection{Characterizing d.p.-equivalence for special classes of graph polynomials}
Theorems \ref{th:char-1} and Proposition \ref{prop:char-2} and Proposition \ref{prop:counterexample}
show that d.p.-equivalence of $\cC$ and $\cC_1$, respectively $\cD$ and $\cD_1$, is not enough
to characterize d.p.-equivalence of generating functions or Harary polynomials
defined by $\cC$ and $\cD$.
Sometimes d.p.-equivalence of graph properties only implies s.d.p.-equivalence 
of the corresponding graph polynomials.

\begin{problem}
Characterize d.p.-equivalence of graph polynomials arising from
$\cC$ and $\cD$ as
\begin{enumerate}
\item
Harary polynomials;
\item
generating functions of induced or spanning subgraphs;
\item
generating functions of relations.
\end{enumerate}
\end{problem}

\marginpar{Should we add something?}
\begin{framed}

Should we add something like this:

\begin{theorem}[Kind of....]
Given two graph polynomials 
$\bP_1(G;X)$ and 
$\bP_2(G;X)$, and a family of graphs
$G_n$ such that on $(G_n)_{n \in \N}$ 
$\bP_1(G;X)$ and 
$\bP_2(G;X)$ are d.p.-equivalent, \
and the sequence $\bP_1(G_n;X)$
is C-finite.
then there is a graph polynomial $\bP_3$ d.p.-equivalent to  $\bP_2(G;X)$
and the sequence $\bP_3(G_n;X)$
is C-finite.
\end{theorem}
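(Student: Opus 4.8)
The plan is to obtain $\bP_3$ as an injective relabeling of the values of $\bP_2$, borrowing the C-finite sequence supplied by $\bP_1$. The guiding principle is Theorem~\ref{th:equiv}(i): d.p.-equivalence of two graph polynomials just says that they induce the same partition on the class of all graphs. Consequently, composing $\bP_2$ with \emph{any} injective map $\phi:\Z[X]\to\Z[X]$ on the value side yields a graph polynomial d.p.-equivalent to $\bP_2$, and the freedom in choosing $\phi$ is exactly what lets me repair the behaviour along the sequence $G_n$ without disturbing the global equivalence class.

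First I would record the consequence of the hypothesis. Since $\bP_1$ and $\bP_2$ are d.p.-equivalent on $(G_n)_{n\in\N}$, for all $n,m$ we have $\bP_2(G_n;X)=\bP_2(G_m;X)$ iff $\bP_1(G_n;X)=\bP_1(G_m;X)$. Writing $S=\{\bP_2(G_n;X):n\in\N\}$ and $T=\{\bP_1(G_n;X):n\in\N\}$, this lets me define $\psi_0:S\to\Z[X]$ by $\psi_0(\bP_2(G_n;X))=X^2\cdot\bP_1(G_n;X)$; it is well defined by the forward implication and injective by the backward implication together with the injectivity of multiplication by $X^2$ on $\Z[X]$. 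The factor $X^2$ is pure bookkeeping: every value of $\psi_0$ lies in $X^2\,\Z[X]$, so the complement of the image $\psi_0(S)$ in $\Z[X]$ contains all nonzero constants and is therefore infinite.

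Next I would extend $\psi_0$ to a total injection $\phi:\Z[X]\to\Z[X]$. Both $\Z[X]\setminus S$ and $\Z[X]\setminus\psi_0(S)$ are countable, and the latter is countably infinite by the previous observation, so any injection of the former into the latter combines with $\psi_0$ to give the desired $\phi$. I then set $\bP_3(G)=\phi(\bP_2(G;X))$, which is a graph invariant with values in $\Z[X]$, i.e.\ a graph polynomial. Since $\phi$ is injective, $\bP_3(G)=\bP_3(G')$ iff $\bP_2(G;X)=\bP_2(G';X)$; taking $F=\phi$ for one direction and a left inverse $F'$ of $\phi$ for the other in Theorem~\ref{th:equiv}(i) shows $\bP_3\sim_{d.p.}\bP_2$. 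Finally, on the sequence $\bP_3(G_n)=X^2\cdot\bP_1(G_n;X)$, and multiplying every term of a C-finite sequence by the fixed polynomial $X^2$ leaves the defining recurrence $f_{i+p}=\sum_{j=0}^{p-1}a_j(\bX)f_{i+j}$ unchanged, so $\bP_3(G_n;X)$ is C-finite.

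The only genuinely delicate point is the injective extension, and specifically ensuring the codomain has room: without the $X^2$ factor the image $T$ could in principle be cofinite while $\Z[X]\setminus S$ is infinite, blocking the extension. Pre-composing the C-finite witness $\bP_1$ with the injective, C-finiteness-preserving, range-shrinking map $p(X)\mapsto X^2 p(X)$ removes this obstacle uniformly, and everything else is routine cardinality bookkeeping.
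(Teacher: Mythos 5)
Your proof is correct for the statement as written, but note that the paper never actually proves this theorem: it appears only in a draft note (labelled ``Kind of....''), accompanied by the caveat that ``there is a bit of freedom what we have to require on the special form of the three graph polynomials,'' and it is excluded from the compiled text. So the comparison must be with the paper's nearest proved result, Theorem \ref{th:char-C-finite}, and the two routes are genuinely different. Your route exploits the fact that d.p.-equivalence records nothing but the partition a graph polynomial induces on graphs, so post-composing $\bP_2$ with an injection $\phi:\Z[X]\to\Z[X]$ yields a d.p.-equivalent invariant; the hypothesis on $(G_n)_{n\in\N}$ makes $\psi_0$ well defined and injective on $S$, the factor $X^2$ leaves infinitely many values of $\Z[X]$ unused so that the partial injection extends to a total one, and termwise multiplication by $X^2$ preserves the defining recurrence, so $\bP_3(G_n;X)=X^2\,\bP_1(G_n;X)$ is C-finite. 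All of these steps are sound, including the observation that without the range-shrinking factor the extension could be blocked. What your argument buys is complete generality; what it costs is that $\bP_3$ is produced by a pure cardinality argument and is therefore unstructured --- in general neither computable nor $\SOL$-definable, which are precisely the properties the paper's framework is built around. Theorem \ref{th:char-C-finite} goes the opposite way: it restricts to generating functions $\bP_{\cC}^{ind}$, $\bP_{\cD}^{span}$ of d.p.-equivalent graph properties (hence equal or complementary ones, by Proposition \ref{pr:dp-properties}), assumes $X^{|V(G_n)|}$ is C-finite, and uses Proposition \ref{prop:complement} together with closure of C-finite sequences under sums and differences to conclude that $\bP_2$ itself is already C-finite on $(G_n)$ --- no auxiliary $\bP_3$, and the witness stays inside the given class of polynomials. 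In effect, your proof shows that the unrestricted statement is a set-theoretic triviality, which is exactly what the authors' caveat is pointing at: the theorem becomes interesting only once one requires $\bP_3$ to have a special form (a generating function, or $\SOL$-definable, or computable), and under any such requirement the injective-extension step of your construction is no longer available.
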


There is a bit of freedom what we have to require on the special form of
the three graph polynomials $\bP_i(G;X)$ for $i=1,2,3$.
\end{framed}
\fi 

\subsection{Determinant polynomials}

For convenience of the reader we repeat the definition from Example \ref{ex:spectral-1}.
We assume that $V(G) = \{ 1, \ldots, n(G)\}$.

Let 
$$
A(G) = (a_{i,j}(G))= \begin{cases} 1 &  (i,j) \in E(G)\\
0 & \mbox{   else   }
\end{cases}
$$

$$
D(G) = (d_{i,j}(G))= \begin{cases} d_i &  (i,i) \in E(G)\\
0 & \mbox{   else   }
\end{cases}
$$
where $d_i(G)$ is the degree of the vertex $i$.

Finally $L(G) = D(G) - A(G)$.

The characteristic polynomial 
$P_A(G;X)$ is given by
$$
P_A(G;X) = \det(\mathbf{1} \cdot X - A(G))
$$
and the Laplacian polynomial
$P_L(G;X)$ is given by
$$
P_L(G;X) = \det(X \cdot D(G) - A(G)).
$$

The two resulting determinant polynomials, $P_A(G;X)$ and $P_L(G;X)$, are s.d.p.-incomparable, 
as shown by Figures \ref{fig1} and \ref{fig2} from Example \ref{ex:spectral-1}.

\begin{theorem}
\label{th:char-not-gen}
\begin{enumerate}[(i)]
There is no graph property $\cC$, and no graph property $\cD$ closed under isolated vertices, such that
\item
$P_A(G;X) = P_{\cC}^{ind}(G;X)$,
\item
$P_L(G;X) = P_{\cC}^{ind}(G;X)$,
\item
$P_A(G;X) = P_{\cD}^{span}(G;X)$,
\item
$P_L(G;X) = P_{\cD}^{span}(G;X)$,
\item
$P_A(G;X) = \chi_{\cC}(G;X)$, or
\item
$P_L(G;X) = \chi_{\cC}(G;X)$.
\end{enumerate}
\end{theorem}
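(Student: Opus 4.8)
The plan is to separate the two determinant polynomials from all the combinatorial polynomials on the right-hand sides by a single \emph{positivity} invariant, and then to dispatch all six items with one witness graph, namely $K_2$.

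First I would record the two structural facts that hold uniformly, for \emph{every} admissible $\cC$ and $\cD$. For the generating functions, $P_{\cC}^{ind}(G;X)=\sum_{A\subseteq V:\,G[A]\in\cC}X^{|A|}$ and $P_{\cD}^{span}(G;X)=\sum_{B\subseteq E:\,G\langle B\rangle\in\cD}X^{|B|}$ are counting series, so all of their coefficients are non-negative integers; this needs no hypothesis on $\cD$ beyond its being a graph property. For Harary polynomials, $\chi_{\cC}(G;k)$ counts colourings, so $\chi_{\cC}(G;k)\ge 0$ for every non-negative integer $k$. I will use this last fact in two forms: $\chi_{\cC}(K_2;0)=0$ (no colouring uses zero colours on a non-empty vertex set) and $\chi_{\cC}(K_2;1)\in\{0,1\}$ (the single colour class is all of $V(K_2)$, legal iff $K_2\in\cC$).

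Next I would compute the determinant polynomials on $K_2$. Writing $A(K_2)$ for the $2\times2$ matrix with $0$ on the diagonal and $1$ off it, and $D(K_2)$ for the identity, one gets
$$P_A(K_2;X)=\det(X\mathbb{I}-A(K_2))=X^2-1, \qquad P_L(K_2;X)=\det(X\cdot D(K_2)-A(K_2))=X^2-1.$$
The salient features are that each has the negative coefficient $-1$ in front of $X^0$, and that each satisfies $P_A(K_2;0)=P_L(K_2;0)=-1<0$. With this, everything follows at once. For (i)--(iv): if $P_A$ or $P_L$ coincided with some $P_{\cC}^{ind}$ or $P_{\cD}^{span}$ on \emph{all} graphs, then on $K_2$ the left-hand side would carry a negative coefficient while the right-hand side has only non-negative coefficients, a contradiction. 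For (v)--(vi): evaluating the putative identity at the non-negative integer $X=0$ yields $-1=P_{A}(K_2;0)=\chi_{\cC}(K_2;0)=0$ (resp. for $P_L$), a contradiction for every $\cC$.

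I expect no real obstacle in the computation itself; the one point demanding care is that the paper uses two conventions for the Laplacian polynomial --- the $\det(X\mathbb{I}-L_G)$ of Example~\ref{ex:spectral-1} and the $\det(X\cdot D(G)-A(G))$ of the present section --- so I would verify the witness under the convention actually in force here, which is the latter and gives $X^2-1$ as above. Under the other convention one finds instead $P_L(K_2;X)=X^2-2X$, which still has a negative coefficient (settling (ii),(iv)) and is negative at $X=1$ while $\chi_{\cC}(K_2;1)\ge 0$ (settling (vi)); so $K_2$ works under either reading. The conceptual heart of the argument, and the step worth emphasising, is recognising that positivity is the right separating invariant: generating functions and colouring counts are intrinsically non-negative objects, whereas a determinant is not, so a single edge already destroys every possible identification.
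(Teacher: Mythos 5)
Your proof is correct, and it takes a genuinely cleaner route than the paper's. The paper proves Theorem~\ref{th:char-not-gen} by computing $P_A$ and $P_L$ on three witnesses ($E_1$, $E_2$, $K_2$) and then running a case analysis on whether these small graphs belong to $\cC$ (resp.\ $\cD$), exhibiting in each case a coefficient or an evaluation that disagrees. You replace all of this by one witness, $K_2$, and one invariant, positivity: $P_{\cC}^{ind}$ and $P_{\cD}^{span}$ have non-negative coefficients by construction, $\chi_{\cC}$ is non-negative at non-negative integers and vanishes at $0$ on non-empty graphs, while $P_A(K_2;X)=P_L(K_2;X)=X^2-1$ (the convention the paper itself uses in its computation) has a negative constant term. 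This buys uniformity --- no case analysis, and all six items fall to the same computation --- and it is also more robust: the paper's case analysis for item (v) asserts $\chi_{\cC}(E_2;1)=0$ in the case $E_1\in\cC$, which fails when additionally $E_2\in\cC$; indeed, if $\cC$ contains $E_1$ and $E_2$ then $\chi_{\cC}(E_1;k)=k$ and $\chi_{\cC}(E_2;k)=k^2$ agree with $P_A$ on both witnesses, so $E_1,E_2$ alone cannot separate them, and your evaluation of $K_2$ at $0$ covers exactly this subcase. What the paper's longer route buys is finer combinatorial information (which coefficient fails, tied to membership of a specific small graph in $\cC$), but that is not needed for the theorem. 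One half-line you should add: $\chi_{\cC}(K_2;0)=0$ is a claim about the \emph{polynomial}, which a priori is pinned down only by the counting values; it holds because the Harary polynomial has the expansion $\sum_{\ell\ge 0} s_\ell(G)\binom{X}{\ell}$, with $s_\ell(G)$ the number of valid colourings using exactly $\ell$ colours (the form the paper uses for $\chi_{mcp}$ in Section~\ref{se:zilber}), and $s_0(K_2)=0$. Your care with the two Laplacian conventions is warranted and consistent with the paper, whose own equations give $P_L(K_2;X)=X^2-1$.
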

\begin{proof}
We first compute:
\begin{gather}
P_A(E_1;X) = X, P_A(E_2;X) = X^2  \label{eq:char}\\
P_L(E_2;X) = 0, P_L(K_2,X) = X^2-1 \label{eq:lap}
\end{gather}
(i) and (v):
If $E_1 \not \in \cC$,
then $P_{\cC}(E_1;X) = 0$ and $\chi_{\cC}(E_1;1)=0$, 
which contradicts Equation \ref{eq:char}.
Otherwise, $E_1 \in \cC$, then 
$P_{\cC}(E_2;X) = X +Q(X)$ and $\chi_{\cC}(E_2;1)=0$, 
which again contradicts Equation \ref{eq:char}.
\\
(ii):
If $K_2 \not \in \cC$, then  
$P_{\cC}(K_2;X)=0$ 
which contradicts Equation \ref{eq:lap}.
Otherwise, $K_2 \in \cC$, then 
$P_{\cC}(K_2;X)=X^2$, 
which again contradicts Equation \ref{eq:lap}.
\\
(vi):
If $K_2 \in \cC$, then 
$\chi_{\cC}(K_2;1)=1$,
which contradicts Equation \ref{eq:lap}.
If $K_2 \not \in \cC$, then 
we distinguish two subcases: $K_1 \in \cC$, then 
$\chi_{\cC}(K_2;2)=2$,
and if
$K_1 \not\in \cC$, then 
$\chi_{\cC}(K_2;2)=0$.
However, 
$P_L(K_2,2) =3$ by  Equation \ref{eq:lap},
which again gives a contradiction.
\\
(iii) and (iv):
Assume $K_2 \not\in \cD$, then $P_{\cD}(K_2;X) =0$ 
which contradicts Equations \ref{eq:char} and \ref{eq:lap}.
Otherwise, assume
$K_2 \in \cD$, then $P_{\cD}(K_2;X) =X$ 
which again contradicts Equations \ref{eq:char} and \ref{eq:lap}.
\end{proof}

\ifskip
\else
Next we show that we can write both $P_A$ and $P_L$ as a linear combination
of graph polynomials which almost look like generating functions.

We can write $P_A(G;X)$ and $P_L(G;X)$ as follows.
Let $A'(G) =(a_{(i,j)}') = X \mathbb{I} -A(G)$ 
and $L'(G) = (l_{(i,j)}' = X \cdot D(G)  -A(G)$.

\begin{gather}
P_A(G;X)
= 
\sum_{\pi \in S_{n(G)}} (-1)^{sign(\pi)} \prod a_{i, \pi(i)}' \notag 
\\
= 
\sum_{\pi \in S_{n(G)}, \mbox{even}} 
\prod a_{i, \pi(i)}'  -
\sum_{\pi \in S_{n(G)}, \mbox{odd}} \prod a_{i, \pi(i)}'  \notag 
\end{gather}
$P_L(G;X)$ is obtained by replacing $A'$ with $L'$.

Next we observe that
\begin{gather}
\prod a_{i, \pi(i)}' = 
\begin{cases} 
0 & \exists i (i \neq \pi(i) \mbox{  and  } (i, \pi(i) \not \in E(G)\\
1 & \forall i (i \neq \pi(i) \mbox{  and  } (i, \pi(i) \in E(G)\\
X^{fp(\pi) & \pi \mbox{  has } fp(\pi) \mbox{  fixpoints, and  }
\forall i (i \neq \pi(i) \mbox{  and  } (i, \pi(i) \in E(G)
\end{cases} \\
\prod l_{i, \pi(i)}' = 
\begin{cases} 
0 & \exists i (i \neq \pi(i) \mbox{  and  } (i, \pi(i) \not \in E(G)\\
1 & \forall i (i \neq \pi(i) \mbox{  and  } (i, \pi(i) \in E(G)\\
\prod_i (d_i \cdot X)
& i = \pi(i) \mbox{  and  }
\forall i (i \neq \pi(i) \mbox{  and  } (i, \pi(i) \in E(G)
\end{cases} 
\end{gather}
Furthermore $d_v = \sum_{u: (u,v) \in E(G)} 1$.


This now looks like the difference of two generating functions of a relation.

It is conceivable to define other matrix presentations of graphs,
and ask when they give rise to d.p.-equivalent determinant polynomials. 
The characterization and recognition problem in this case amounts to the question
when the characteristic polynomial of a matrix is the
the characteristic polynomial arising from a graph.
However, in this paper we do not pursue this further.

\fi 

\subsection{Generalized chromatic polynomials}
\label{se:zilber}

Here we show that not all generalized chromatic polynomials are Harary polynomials.

An $mcp$-coloring with at most $k$-colors is an edge coloring such that between any two vertices
there is at least one path where all the edges have the same color.
Let $\chi_{mcp}(G;k)$ be the number of $mcp$-colorings of $G$ with at most $k$ colors.
It follows again from \cite{ar:KotekMakowskyZilber08,ar:KotekMakowskyZilber11}
that $\chi_{mcp}(G;k)$ is a polynomial in $k$.
In fact, it can be written as
$$\chi_{mcp}(G;k) = \sum_{\ell = 0}^{m(G)} c_{mcp}(\ell, G) {X \choose \ell},$$
where $c_{mcp}(\ell, G)$ is the number of $mcp$-colorings with exactly $\ell$ colors.

\begin{theorem}
\label{th:mcp-not-harary}
There is no graph property $\cC$ such that $\chi_{mcp}(G;k) = \chi_{\cC}(G;k)$ is a Harary polynomial.
\end{theorem}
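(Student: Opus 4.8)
The plan is to argue by contradiction, exploiting the \emph{structural} mismatch that $\chi_{mcp}$ is assembled from edge colorings while a Harary polynomial $\chi_{\cC}$ is assembled from vertex colorings. The decisive fact is that $\chi_{mcp}$ vanishes identically on every disconnected graph, whereas a Harary polynomial can vanish identically on a graph $G$ only when $G$ admits no $\cC$-colorable vertex partition at all, a possibility controlled by whether $K_1\in\cC$. Playing these two phenomena against each other on the tiny graphs $K_1$ and $E_2$ will yield the contradiction $K_1\in\cC$ and $K_1\notin\cC$.

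First I would record two evaluations of $\chi_{mcp}$. For any disconnected graph $G$, two vertices lying in distinct components are joined by no path, hence by no monochromatic path, so no $mcp$-coloring exists and $\chi_{mcp}(G;X)\equiv 0$; in particular $\chi_{mcp}(E_2;X)\equiv 0$. On the other hand $K_1$ is connected and carries a unique (empty) edge coloring, so $\chi_{mcp}(K_1;X)=1\not\equiv 0$. Now assume $\chi_{mcp}(G;X)=\chi_{\cC}(G;X)$ for all $G$. From $\chi_{mcp}(K_1;X)\not\equiv 0$ I get $\chi_{\cC}(K_1;X)\not\equiv 0$, and since the only color class available on $K_1$ is $K_1$ itself, this forces $K_1\in\cC$. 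From $\chi_{mcp}(E_2;X)\equiv 0$ I get $\chi_{\cC}(E_2;X)\equiv 0$; writing out the two vertex colorings of $E_2$ gives $\chi_{\cC}(E_2;X)=X\cdot\mathbf{1}_{[E_2\in\cC]}+X(X-1)\cdot\mathbf{1}_{[K_1\in\cC]}$, and because $X$ and $X(X-1)$ are linearly independent the coefficient of $X(X-1)$ must vanish, forcing $K_1\notin\cC$. The conclusions $K_1\in\cC$ and $K_1\notin\cC$ contradict each other, so no such $\cC$ exists.

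There is essentially no hard analytic step here; the only points requiring care are the degenerate values on edgeless graphs — that $\chi_{mcp}$ of a one-vertex graph is the constant polynomial $1$ and that $\chi_{mcp}(E_2;X)$ is the zero polynomial, both because $mcp$ counts edge colorings of an empty edge set — and the linear-independence argument used to extract the two indicators from $\chi_{\cC}(E_2;X)$. If one prefers a proof that sidesteps the empty-edge-set subtlety entirely, I would instead evaluate at $k=1$: there $\chi_{\cC}(G;1)=1$ iff $G\in\cC$ (exactly as in the proof of Theorem~\ref{th:dom-not-chrom}) while $\chi_{mcp}(G;1)=1$ iff $G$ is connected, which forces $\cC$ to be precisely the class of connected graphs, and then separate the two polynomials on a single witness such as the path on three vertices, where $\chi_{mcp}(P_3;k)=k$ but the connected-class polynomial equals $k+2k(k-1)+k(k-1)(k-2)=k^3-k^2+k$.
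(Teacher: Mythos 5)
Your proof is correct, but it follows a genuinely different route from the paper's. The paper argues globally: it first claims that $\chi_{mcp}(G;k)$ equals $k$ for every connected $G$ and $0$ otherwise, uses this to pin down $\cC$ as the class $\cC_0$ of graphs that are a connected graph together with isolated vertices, and then derives the contradiction by evaluating both polynomials at $k=2$ on a disjoint union $G \sqcup G$. You instead localize everything to the two graphs $K_1$ and $E_2$: nonvanishing on $K_1$ forces $K_1 \in \cC$, while the identically zero value on $E_2$, decomposed as $X\cdot\mathbf{1}_{[E_2\in\cC]}+X(X-1)\cdot\mathbf{1}_{[K_1\in\cC]}$ with $X$ and $X(X-1)$ linearly independent, forces $K_1 \notin \cC$. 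This buys you robustness that the paper's proof lacks: under the literal definition of an $mcp$-coloring, the paper's blanket claim about connected graphs is problematic (in $K_3$ every pair of vertices is joined by a single edge, hence by a trivially monochromatic path, so \emph{every} edge coloring of $K_3$ qualifies and $\chi_{mcp}(K_3;k)=k^3$, not $k$), whereas your main argument needs only that $\chi_{mcp}$ vanishes on disconnected graphs and is nonzero on $K_1$, both of which are unassailable; moreover your argument survives either convention for the degenerate value on $K_1$ (the constant $1$, as you compute, or $X$, as the paper's case formula would give), since you only use that it is not identically zero. Your alternative proof (evaluate at $k=1$, conclude $\cC$ must be exactly the class of connected graphs, then separate the two polynomials on $P_3$) is the one closest in spirit to the paper's strategy, and it is also sound -- indeed safer than the paper's version, both because $P_3$ is a tree, where the unique-path argument genuinely forces all edges to share one color, and because ``exactly the connected graphs'' is what the $k=1$ evaluation actually yields, rather than the larger class $\cC_0$ (which already fails at $k=1$ on, say, $K_2 \sqcup K_1$, where $\chi_{mcp}$ vanishes but $\chi_{\cC_0}$ does not).
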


\begin{proof}
We first observe that 
$$
\chi_{mcp}(G;k) = 
\begin{cases} k & G \mbox{   connected   }\\
0 & \mbox{  otherwise  }
\end{cases}
$$
because  if a graph is connected all the edges have to be colored by the same color.

Now assume for contradiction that $\chi_{mcp}(G;k) = \chi_{\cC}(G;k)$. 
Then $\cC =\cC_0$ is the class of  graphs $G= H_G \sqcup E_{i(G)}$ which a disjoint union of a connected graph $H$
with a set of isolated vertices.
However 
$$
\chi_{\cC_0}(G \sqcup G;2) =
\begin{cases} 4 & H_G \mbox{   connected   }\\
0 & \mbox{  otherwise  }
\end{cases}
$$
but for $H_G =G$ connected we have  $\chi_{mcp}(G;2) =2$.
\end{proof}

\begin{theorem}
\label{th:mcp-not-gen}
There is no graph property $\cC$ such that 
$\chi_{mcp}(G;k) 
$
is a generating function of induced  or spanning subgraphs.
\end{theorem}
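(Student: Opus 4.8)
The plan is to reduce the entire question to the closed form for $\chi_{mcp}$ already isolated in the proof of Theorem \ref{th:mcp-not-harary}, namely that $\chi_{mcp}(G;X) = X$ when $G$ is connected and $\chi_{mcp}(G;X) = 0$ otherwise. The decisive feature of this polynomial is that on \emph{every} connected graph it is the \emph{same} degree-one polynomial $X$, independently of the order or size of $G$. By contrast, the coefficient of $X^1$ in a generating function of induced or spanning subgraphs necessarily counts certain single vertices or single edges, and hence scales with $n(G)$ or $m(G)$. The proof exploits exactly this clash by comparing the coefficient of $X^1$ on two connected graphs of different sizes.

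For the induced case, I would suppose toward a contradiction that $\chi_{mcp}(G;X) = \bP_{\cC}^{ind}(G;X)$ for all $G$, and evaluate both sides on $K_1$ and on $K_2$. Since $G[A] \cong K_1$ for every singleton $A$, the coefficient of $X^1$ in $\bP_{\cC}^{ind}(G;X)$ equals $n(G)$ if $K_1 \in \cC$ and $0$ otherwise. As both $K_1$ and $K_2$ are connected, $\chi_{mcp}$ forces this coefficient to be $1$ in both cases, whereas $\bP_{\cC}^{ind}$ yields $[K_1 \in \cC]$ on $K_1$ and $2\cdot[K_1 \in \cC]$ on $K_2$. These cannot both equal $1$, since that would require $[K_1 \in \cC] = 1$ and $2[K_1 \in \cC] = 1$ simultaneously, which is impossible.

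For the spanning case, recall that $\cD$ is closed under adding and removing isolated vertices, so that for any single edge $e$ the spanning subgraph $G\langle\{e\}\rangle$, being an edge together with isolated vertices, lies in $\cD$ iff $K_2 \in \cD$. I would compare $\bP_{\cD}^{span}$ with $\chi_{mcp}$ on $K_2$ and on the path $P_3$. The coefficient of $X^1$ is then $m(G)\cdot[K_2 \in \cD]$, that is, $[K_2\in\cD]$ on $K_2$ and $2[K_2\in\cD]$ on $P_3$, while $\chi_{mcp}$ demands the value $1$ in both cases because $K_2$ and $P_3$ are connected. This is again the impossible system $[K_2\in\cD]=1$ and $2[K_2\in\cD]=1$, completing the argument.

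The step that needs the most care, rather than any genuine difficulty, is this reduction in the spanning case: one must invoke the closure of $\cD$ under isolated vertices to guarantee that all single-edge spanning subgraphs of $G$ are simultaneously in or out of $\cD$, so that the $X^1$-coefficient is genuinely $m(G)\cdot[K_2\in\cD]$ rather than an uncontrolled count depending on which edge is chosen. The analogous point for induced subgraphs is automatic, since every singleton induces $K_1$. Everything else is a direct coefficient comparison once the closed form of $\chi_{mcp}$ is in hand, so the essential content is simply that $\chi_{mcp}$ is \emph{size-blind} among connected graphs and therefore incompatible with any representation whose linear coefficient is an honest subset count.
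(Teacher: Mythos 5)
Your proof is correct (granting, as the paper itself does, the closed form $\chi_{mcp}(G;X)=X$ for connected $G$ and $0$ otherwise, derived in the proof of Theorem \ref{th:mcp-not-harary}), but it argues from the opposite end of the polynomial than the paper does. The paper's proof compares \emph{top} coefficients: if some $G\in\cC$ has more than one vertex (resp.\ some $G\in\cD$ has more than one edge), then $\bP^{ind}_{\cC}(G;X)$ contains the monomial $X^{n(G)}$ (resp.\ $X^{m(G)}$) with coefficient $1$, so it has degree at least $2$, while $\chi_{mcp}(G;X)$ has degree at most $1$. That argument is shorter but, as written, silently skips the degenerate properties consisting only of graphs with at most one vertex or edge (e.g.\ $\cC=\{K_1\}$, for which $\bP^{ind}_{\cC}(G;X)=n(G)\cdot X$ has degree $1$ and no top-coefficient clash arises); your linear-coefficient argument covers these uniformly, since it pins the coefficient of $X^1$ to $n(G)\cdot[K_1\in\cC]$ (resp.\ $m(G)\cdot[K_2\in\cD]$) and plays the scaling in $n(G)$, $m(G)$ against the size-blindness of $\chi_{mcp}$ on connected graphs. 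One caveat: your induced case leans on $\chi_{mcp}(K_1;X)=X$, which is the paper's closed form but is shaky at $K_1$ (a graph with no edges has exactly one edge coloring, so arguably $\chi_{mcp}(K_1;k)=1$); your contradiction survives this correction, since the system becomes $[K_1\in\cC]=0$ together with $2[K_1\in\cC]=1$, still unsolvable, and you can sidestep the issue entirely by running the induced case on $K_2$ and $P_3$, exactly as you do in the spanning case.
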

\begin{proof}
Let 
$$
tv(connected)(G) = 
\begin{cases} 1 & G \mbox{   connected   }\\
0 & \mbox{  otherwise.  }
\end{cases}
$$
Then  $\chi_{mcp}(G;X) = tv(connected)(G) \cdot X$.
Here $tv$ stands for {\em truth value}.
We know that
$$\chi_{mcp}(G;k) = \sum_{\ell = 0}^{m(G)} c_{mcp}(\ell, G) {X \choose \ell},$$
where $c_{mcp}(\ell, G)$ is the number of $mcp$-colorings with exactly $\ell$ colors.
Hence,  
$$
c_{mcp}(\ell,G) =
\begin{cases} 1 & G \mbox{   connected   }\\
0 & \mbox{  otherwise  }
\end{cases}.
$$
On the other hand, for $P_{\cC}^{ind}(G;k) = \sum_{\ell} c_{\cC}(\ell, G) X^{\ell}$
if $G \in \cC$ has more than one vertex, the coefficient 
$C_{\cC}(n(G),G)=1$, but
$C_{mcp}(n(G),G)=0$.
Similarly, 
if 
for $P_{\cC}^{ind}(G;k) = \sum_{\ell} d_{\cC}(\ell, G) X^{\ell}$,
$G \in \cC$ has more than one edge, the coefficient 
$C_{\cC}(m(G),G)=1$, but
$C_{mcp}(m(G),G)=0$.
\end{proof}

\section{$\SOL$-definability of graph polynomials}
\label{se:sol}

In this section we present the formalism of $\SOL$-definable graph polynomials.
The idea originated in 
\cite{ar:CourcelleMakowskyRoticsDAM,ar:Makowsky01} and was further developed in \cite{phd:Kotek}.
It is the logical framework which includes all the examples of graph polynomials
so far discussed in this paper.

$\SOL$-definable graph polynomials are given using a finite set of $\SOL$-formulas
$\phi_1, \ldots, \phi_s$ such that
replacing all the formulas $\phi_i: i \leq s$ by  logically equivalent formulas $\psi_i$
the resulting polynomial remains the same.

As a starting point,
$\SOL$-definable graph polynomials include
the generating functions of $\SOL$-definable
graph properties. 
As we have seen, the dominating polynomial, the characteristic and Laplacian polynomials, 
and the generalized chromatic polynomials are  not of this form (Theorems
\ref{th:dom-not-gen},
\ref{th:char-not-gen},
\ref{th:mcp-not-gen}
).
Even if we include the Harary polynomials of $\SOL$-definable graph properties,
we still note that the dominating polynomial is not of this form (Theorem \ref{th:dom-not-chrom}).
To accommodate all these examples, we will give an inductive definition of $\SOL$-definable
graph polynomials by imposing the following closure properties: 
\begin{enumerate}[(i)]
\item sums and products of graph polynomials,
\item summation over relations on graphs,
\item products over tuples of vertices or edges of graphs,
\item substitution of indeterminates by  algebraic terms involving indeterminates,
\item substitution of indeterminates by  graph polynomials.
\end{enumerate}

Given a graph polynomial $\bP(G;X)$ there are uncountably many d.p.-equivalent graph polynomials.
However, there will be only countably many $\SOL$-definable graph polynomials.
To show that a certain property of a graph polynomial $\mathcal{X}$ is not a semantic property, 
it suffices to find
a s.d.p. (d.p.)-equivalent graph polynomial which does not have property $\mathcal{X}$.
Allowing all s.d.p.-equivalent graph polynomials really misses the point.

We are really interested in semantic properties of graph polynomials in a specific prescribed form.
Let $\mathcal{F}$ be a family of graph polynomials, such as generalized chromatic polynomials,
generating functions of induced or spanning subgraphs, or $\SOL$-definable polynomials.

By restricting the graph polynomials under consideration to $\mathcal{F}$
we say a property of a polynomial is semantically meaningful on $\mathcal{F}$
if all graph polynomials $\bP \in \mathcal{F}$ which are  d.p.-equivalent (s.d.p.-equivalent)
share this property.

\begin{example}
Let $\mathcal{F}$  be the class of 
graph polynomials given by
generating functions of induced subgraphs of property $\cC$.
Let $\bP_{\cC}^{ind}$ and
$\bP_{\cD}^{ind}$ 
from Section \ref{se:dp}.
Let $\cD = \bar{\cC}$ be the complement of $\cC$.
By Proposition \ref{th:char-1} we have that 
$\bP_{\cC}^{ind}$ and
$\bP_{\cD}^{ind}$ are s.d.p. equivalent.
For $G \in \cC$ we have that
$\bP_{\cC}^{ind}(G;X)$  is monic, but by Proposition \ref{prop:complement},
$\bP_{\cD}^{ind}(G;X)$ is not necessarily monic.

Hence, monic is not a semantic property even for 
graph polynomials restricted to
generating functions of induced subgraphs.
\end{example}

The framework of $\SOL$-definable graph polynomials allows us to analyze the graph theoretic (=semantic)
content of graph polynomials. To show that a property $\mathcal{X}$ of graph polynomials
is not a graph theoretic property it suffices to show:
\begin{quote}
For every $\SOL$-definable graph polynomial $\bP \in \mathcal{X}$ there is
a d.p.- or s.d.p.-equivalent $\SOL$-definable graph polynomial $\bQ  \not \in \mathcal{X}$ 
which can be constructed from the definition of $\bP$.
\end{quote}
Usually, $\bQ$ is explicitly given from the formulas defining $\bP$.
In some cases 
$\bQ$ is obtained from $\bP$ using substitutions and, possibly, by adding a prefactor.


\subsection{Second Order Logic}
We assume the reader is familiar with Second Order Logic.
Let $\tau$ be a finite set of relation symbols, i.e., 
a purely relational vocabulary.
We have individual variables $v_i$ and relation variables $U_{\rho(i), i}$ of arity
$\rho(i)$. The set of $\SOL(\tau)$-formulas is defined inductively.
We define atomic formulas over $\tau$ and equality using the relation symbols from $\tau$
and the relation variables. We close under Boolean connectives, and existential and universal
quantification over individual variables and relation variables.
We denote $\SOL(\tau)$-formulas with Greek letters, $\phi,\psi,\theta$, possibly with indices.
\\
{\bf Caveat:}
We use  here $m,n,k,$ here as summation indices and not as graph parameters.
\\
Let 
$\bU = (U_{\rho(1),1}, \ldots, U_{\rho(n),n})$ ,
$\bv = (v_{1}, \ldots, v_{m})$. 
We write
$\phi(\bU, \bv)$ for the formula with the indicated free variables.
Given a $\tau$-structure $\fA$ with universe $A$, 
relations $B_{\rho(i),i} \subseteq A^{\rho(i)}$ for $i\in [n]$
and $b_i \in A$ for $i \in [m]$, we put
$\bB = (B_{\rho(1),1}, \ldots, B_{\rho(n),n})$ ,
$\bb = (b_{1}, \ldots, b_{m})$  and we write
$\phi(\bB, \bb)$ for the formula evaluated over $\fA$. 

\subsection{Definable graph polynomials}
The notion of {\em definability of graph parameters and graph polynomials in $\SOL$} 
was first introduced\footnote{
In 
\cite{ar:CourcelleMakowskyRoticsDAM} we also deal with definability in Monadic Second Order Logic ($\MSOL$),
but in this paper this distinction is of no use.
Note however, that the results also hold for $\MSOL$.
} in
\cite{ar:CourcelleMakowskyRoticsDAM} and extensively studied in 
\cite{ar:GodlinKotekMakowsky08,ar:FischerKotekMakowsky11,ar:KotekMakowskyZilber11,phd:Kotek,ar:GodlinKatzMakowsky12,pr:KotekMakowskyRavveSYNASC,pr:MakowskyKotekRavve2013,ar:KotekMakowsky-LMCS2014}.

Let $\cR \in \{\Z, \Q, \R, \C\}$ a ring or field.
Given a graph $G =(V,E)$ we define the set of interpreted terms $\SOLEVAL(G)$  in $\cR[\bX]$ inductively.
\begin{enumerate}
\item
Elements of $\cR[\bX]$ are in $\SOLEVAL(G)$.
\item
$\SOLEVAL(G)$ is closed under addition, subtraction and multiplication in $\cR[\bX]$.
\item
$\SOLEVAL(G)$ is closed under substitution of indeterminates by elements of $\cR[\bX]$.
\item
(Small sums and products)
If $t \in \SOLEVAL(G)$, and $\phi(\bv)$ is a formula of $\SOL(\tau)$ with 
individual variables $v_1, \ldots, v_{\rho}$
and non-displayed interpreted individual and relation parameters,
then
$$
\sum_{\bb \in V^{\rho}:\phi(\bb)} t 
$$
and
$$
\prod_{\bb \in V^{\rho}:\phi(\bb)} t 
$$
are interpreted terms in
$\SOLEVAL(G)$.
\item
(Large sums)
If $t \in \SOLEVAL(G)$, and $\phi(U)$ is a formula of $\SOL(\tau)$ with 
relation variable $U$ of arity $\rho$
and non-displayed interpreted individual and relation parameters,
then 
$$
\sum_{B \subseteq V^{\rho}:\phi(B)} t 
$$
is a term in $\SOLEVAL(G)$.
\item
An expression $t \in \SOLEVAL(G)$ defines for each graph uniformly a polynomial $t(G) \in \cR[\bX]$.
\item
A graph polynomial $P(G,\bX)$ is $\SOL$-definable if there is an expression $t \in\SOLEVAL(G)$
such that for each graph $G$ we have $t(G) = \bP(G;\bX)$.
\end{enumerate}
We first give examples where we use {\em small}, i.e., polynomial sized sums and products:
\begin{Examples}
\begin{enumerate}
\item
The cardinality of $V$ is $\MSOL$-definable by $$\sum_{v\in V} 1$$
\item
The number of connected components of a graph $G$, $k(G)$ is
$\MSOL$-definable by 
$$\sum_{C \subseteq V: \mathrm{component}(C)} 1,$$
where $\mathrm{component}(C)$ says that $C$ is a connected component.
Although them sum ranges over subsets of $V$, it is small, because there are at most $|V|$-many
connected components.
\item
The graph polynomial $X^{k(G)}$ is
$\MSOL$-definable 
by 
$$\prod_{c \in V: \mathrm{first-in-comp}(c)} X$$
if we have a linear order in the vertices  and
$\mathrm{first-in-comp}(c)$ says that $c$ is a first element in
a connected component.
\end{enumerate}
\end{Examples}

Now we give examples with possibly {\em large}, i.e., exponential sized sums:
\begin{Examples}
\begin{enumerate}
\item[(iv)]
The number of cliques in a graph is 
$\MSOL$-definable by 
$$\sum_{C \subseteq V: \mathrm{clique}(C)} 1,$$
where $\mathrm{clique}(C)$ says that $C$ induces a complete graph.
\item[(v)]
Similarly ``the number of maximal cliques''  
is $\MSOL$-definable by 
$$\sum_{C \subseteq V: \mathrm{maxclique}(C)} 1,$$
where $\mathrm{maxclique}(C)$ says that $C$ induces a maximal complete graph.
\item[(vi)]
The clique number of $G$, $\omega(G)$ 
is $\SOL$-definable by 
$$\sum_{C \subseteq V: \mathrm{largest-clique}(C)} 1,$$
where $\mathrm{largest-clique}(C)$ says that $C$ induces a maximal complete graph
of largest size.
\item[(vii)]
The clique polynomial of $G$ is $\SOL$-definable by
$$
\sum_{C \subseteq V: \mathrm{clique}(C)}  \prod_{v \in C} X.
$$
\end{enumerate}
\end{Examples}
Now here are some prominent graph polynomials which are easily seen to be $\SOL$-definable.
\begin{Examples}
\label{ex:prominent}
\begin{enumerate}
\item
\label{ex:spectral}
Let $G=(V(G), E(G))$ be a loopless graph without multiple edges.
Here we consider again 
the {\em characteristic polynomial of $G$}, $P_{A}(G;X)$,
and the {\em Laplacian polynomial}, $P_L(G;X)$ from Example \ref{ex:spectral-1}. 
To see that both $P_{A}(G;X)$ and $P_L(G;X)$ are $\SOL$-definable, we write them as 
a sum of two $\SOL$-definable polynomials in distinct indeterminates $X_1$ and $X_2$,
and then put $X_1=X$ and $X_2= (-1)\cdot X$.
Here we use that $\SOL$-definable polynomials are closed under substitution by elements of the
polynomial ring.

In other words, to express the determinant
$$
P_B(G;X) = \det (X\cdot \mathbf{1}- B)
$$
of a matrix $B$ dependent on $G$,
we write
$$
P_B(G;X_1, X_2) = P^{even}_B(X_1) + P^{odd}_B(X_2)
$$
where $P^{even}_B(X_1)$  sums over all even permutations and 
where $P^{odd}_B(X_2)$  sums over all odd permutations and then
put
$$
P_B(G;X) = P^{even}_B(X) + P^{odd}_B((-1)\cdot X).
$$
Now we can use this to show that 
$P_{A}(G;X) = \det (X\cdot \mathbf{1}- A_G)$ and
$P_L(G;X) = \det (X\cdot \mathbf{1}- L_G)$ are  substitution instances of bivariate 
$\SOL$-definable graph polynomials.
\item
Let 
$$a_i(G) = \mid \{ U \subseteq V : (G, U) \models \phi(U)  \mbox{ and } |U|=i \} \mid$$
be {\em uniformly defined} numeric graph parameters.
Then
$$
\sum_i a_i(G) X^i  = 
\sum_{U:\phi(u)} X^{|U|}
$$
is a the {\em generic} form of an {\em $\SOL$-definable graph polynomial}.
\begin{enumerate}
\item
If $\phi(U)$ says that $U$ is a set of edges which form a matching, we get the
{\em matching generating polynomial} $g(G;X)$.
\item
If $\phi(U)$ says that $U$ is a set of vertices which form an independent set, we get the
{\em independence polynomial} $I(G;X)$.
\end{enumerate}
\item
The {\em Potts model} is the partition function
$$
Z(G;X,Y) = \sum_{B \subset E(G)} X^{k[B]} Y^{|B|}
$$
with $k[B]$ is the number of connected components of the spanning subgraph generated by $B$.
$Z(G;X,Y)$ is $\SOL$-definable if an order on the vertices is present, using the closure properties and
the previous examples.
\item
The chromatic polynomial $\chi(G;X)$ is $\SOL$-definable, using closure under substitution and  the fact that
$$
\chi(G;X) =  Z(G;X,-1).
$$
\end{enumerate}
\end{Examples}
\begin{remark}
\label{rem:negative}
Negative coefficients may occur in $\SOL$-definable polynomials, however they do occur only
as a result of substitution of negative numbers for indeterminates.
In the above examples $Z(G;X,Y)$ has no negative coefficients, but $\chi(G;X) =  Z(G;X,-1)$ does.
\end{remark}
In general, to show that a graph polynomial is definable in $\SOL$ may be difficult.
For instance, counting the number of planar induced subgraphs uses Kuratowski's or Wagner's characterization
of planarity. We do not know a general method to show that a graph polynomial is not $\SOL$-definable.
To show that it is not $\MSOL$-definable one can use the method of connection matrices, \cite{ar:KotekMakowsky-LMCS2014}.

\subsection{Normal form of $\SOL$-definable graph polynomials}

In \cite{ar:KotekMakowskyZilber11,phd:Kotek} the following normal form theorem was proved:

\begin{theorem}[Normal Form Theorem]
\label{th:normal-form}
Every $\SOL$-definable multivariate graph polynomial $\bP(G;\bX)$
can be written as
\begin{gather}
\bP(G;\bX)=
\sum_{A \subseteq V^{r_1}:\phi_1(A)} 
\ldots
\sum_{A \subseteq V^{r_s}:\phi_s(A)} 
\prod_{\bX_1 \in A:\psi_1(A, \bX_1)} X_1 
\cdot \ldots \cdot 
\prod_{\bX_t \in A:\psi_t(A, \bX_t)} X_t 
\notag
\end{gather}
with $\phi_i, i \leq s, \psi_j: j \leq t$ $\SOL$-formulas.
\end{theorem}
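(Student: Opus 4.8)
The plan is to proceed by structural induction on the interpreted term $t \in \SOLEVAL(G)$ that defines $\bP(G;\bX)$, maintaining as an invariant that $t$ can be rewritten into the target shape: a prenex-like block of large relational sums $\sum_{A \subseteq V^{r_i}:\phi_i(A)}$ on the outside, followed by a product of tuple-indexed products $\prod_{\bX_j \in A:\psi_j(A,\bX_j)} X_j$ of single indeterminates on the inside. I would call such a $t$ a \emph{normal-form term} (NF) and check the invariant against every clause of the inductive definition of $\SOLEVAL(G)$.

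For the base clause (elements of $\cR[\bX]$) I would express a monomial $X_j^{a}$ as a product $\prod_{\bX \in A:\psi} X_j$ over a relation with exactly the prescribed number of satisfying tuples, and realize integer coefficients by counting, i.e.\ as $\sum_{A:\phi}1$ with $\phi$ satisfied by the intended number of relations (using bounded-arity, including nullary, index sets to make this uniform in $G$). Subtraction and the resulting negative coefficients I would not try to fit literally into the positive NF; instead I would carry the sign through a distinguished indeterminate later substituted by $-1$, exactly as in Remark \ref{rem:negative} and as done for $P_A$, $P_L$ and $\chi$ in Example \ref{ex:prominent}. The large-sum clause is then immediate: summing an NF over a further relation $\sum_{B:\phi(B)}$ merely prepends one more sum to the outer block. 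The small-sum clause reduces to the large-sum clause by viewing a sum over tuples $\bb \in V^\rho$ as a large sum over singleton relations $\{\bb\}$.

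The clauses that carry the weight are multiplication and small products (and, through the latter, substitution by non-monomial polynomials). For a product of two NFs I would rename the bound relation variables of the two factors apart, concatenate their outer sum-blocks into one, and merge the inner products: since $X_j^{e}\cdot X_j^{f}=X_j^{e+f}$, two tuple-products over the same indeterminate combine into a single tuple-product indexed by the disjoint (tagged) union of their index sets, keeping the result in NF. For a small product $\prod_{\bb:\phi(\bb)} s_{\bb}$ of an NF $s_{\bb}=\sum_{\vec A}M_{\bb}$ I would distribute the product over the sums; the resulting product-of-sums becomes a single large sum by encoding a simultaneous choice of a witness $\vec A$ for each index tuple $\bb$ as one relation of higher arity, obtained by pairing the index coordinates with the witness coordinates. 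This is precisely where the arbitrary arities and tupling available in $\SOL$ are used, and where the rebuilt selection formulas $\phi_i,\psi_j$ must be verified to remain $\SOL$-formulas. Substitution of an indeterminate by an element of $\cR[\bX]$ I would then handle by substituting into the NF and, when the substituted element is a sum of monomials, re-expanding via the multiplication and addition cases and re-normalizing.

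The genuinely delicate step, and the one I expect to be the main obstacle, is \emph{addition} of two NFs with differently shaped sum-blocks into a single NF. Here I would adjoin a fresh low-arity ``tag'' relation whose two values select between the two summands, fold the tag into each $\phi_i$ so that exactly one block contributes for each original configuration, and zero out the inactive block by forcing its summation relations to be empty. The bookkeeping required to guarantee that no spurious configurations are counted and that the rewritten $\phi_i,\psi_j$ remain $\SOL$-formulas is where the argument is easiest to get wrong; together with the product-of-sums encoding above it forms the technical core, whereas the remaining clauses are routine rewriting that preserves the NF invariant.
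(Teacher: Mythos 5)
First, a caveat about the comparison: the paper itself gives no proof of Theorem \ref{th:normal-form}; the result is quoted from \cite{ar:KotekMakowskyZilber11,phd:Kotek}. Your plan --- structural induction over the clauses defining $\SOLEVAL(G)$, with a prenex-like normal form as the invariant --- is indeed the route taken in those sources, and several of your steps are sound: prepending large sums, reducing small sums to large sums over singleton relations, addition of two normal forms via a tag relation that forces the inactive block of summation variables to be empty (so its products contribute the factor $1$), distribution of small products over large sums by encoding a choice function $\bb \mapsto \vec{A}_{\bb}$ as a single relation of higher arity, and the treatment of negative coefficients by an auxiliary indeterminate later set to $-1$, which is exactly the device of Remark \ref{rem:negative} and Example \ref{ex:prominent}(i).

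The genuine gap sits in your base case and in your multiplication step, and it is the same gap twice. In the displayed normal form, the exponent of $X_j$ in any single summand is $|\{\bb \in A : \psi_j(A,\bb)\}| \le |V(G)|^{r_i}$, hence at most $1$ on the one-vertex graph $K_1$; so with one cardinality product per (distinct) indeterminate, every normal-form value on $K_1$ is multiaffine. Yet $X \cdot X$, and likewise $X \cdot \prod_{v \in V} X$, belong to $\SOLEVAL(G)$ by clauses (i) and (ii) of the definition and evaluate to $X^2$ on $K_1$. Consequently: (a) your base-case recipe of ``a relation with exactly the prescribed number of satisfying tuples'' does not exist uniformly in $G$ --- besides $K_1$, note that on the edgeless graphs $E_n$ every $\SOL$-definable set of $r$-tuples (relative to canonical, automorphism-fixed parameters) is invariant under $\mathrm{Aut}(E_n)=S_n$, hence has cardinality $0$, $1$ (nullary case), or at least $n$, so no formula defines exactly two tuples; and (b) your merge of two products of the same indeterminate into one product over a ``tagged disjoint union'' requires two distinguishable tag values, which do not exist when $|V(G)|=1$, and indeed no such merge can exist by the exponent bound above. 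The repair, which is how the cited sources set things up, is to weaken the invariant: carry a finite \emph{list} of cardinality products in which the same indeterminate may occur several times, so that a fixed exponent $X^a$ is realized by $a$ repetitions of a one-factor product (over, say, a forced nullary relation), and multiplication is handled by concatenating the sum blocks and the product lists, never merging them. Under the literal reading of the theorem's display --- one product per distinct indeterminate --- the statement is false, so no amount of bookkeeping inside your scheme can close this; the invariant itself has to change.
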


This shows that every $\SOL$-definable graph polynomial is a multiple generating function
of several $\SOL$-definable relations.

\subsection{Semantically equivalent presentations of graph polynomials}

Theorems \ref{th:GodsilGutman} and \ref{th:FarrellWhitehead}
show that, restricted to certain $\SOL$-definable graph classes
two different graph polynomials have identical polynomials as their values.
In fact, if we assume $\SOL$-definability, we can always achieve equality of the coefficients.

\begin{theorem}
\label{th:identical-coeff}
Assume $\mathcal{K}$ is a $\SOL$-definable graph class, and 
$\bP(G;\bX)$ and
$\bQ(G;\bX)$ are $\SOL$-definable
and d.p.-equivalent on $\mathcal{K}$.
\begin{enumerate}
\item
There is a $\SOL$-definable graph polynomial $P'(G;\bX)$ which is d.p-equivalent to $\bP(G;\bX)$ and
such that
for all $G \in \mathcal{K}$ we have that
$$ \bP'(G;\bX) =\bQ(G;\bX).$$
\item
If $\mathcal{K}$, $\bP(G;\bX)$ and $\bQ(G;\bX)$ are all computable 
(computable in exponential time),
so is $P'(G;\bX)$.
\end{enumerate}
\end{theorem}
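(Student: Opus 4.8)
The plan is to separate the semantic content of part (i) from the question of $\SOL$-definability, proving the former cleanly via Theorem~\ref{th:dp} and then grafting on a definability argument. Since $\bP$ and $\bQ$ are d.p.-equivalent on $\mathcal{K}$, the assignment $\bP(G)\mapsto\bQ(G)$ for $G\in\mathcal{K}$ is a well-defined injection $H$ on the set $R:=\{\bP(G;\bX):G\in\mathcal{K}\}$. I would define a function $F:\Z[\bX]\to\Z[\bX]$ by letting $F=H$ on $R$ and letting $F$ be any fixed injection of $\Z[\bX]\setminus R$ into a set of polynomials disjoint from $H(R)$, and then set $\bP'(G;\bX):=F(\bP(G;\bX))$. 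By construction $\bP'=\bQ$ on $\mathcal{K}$, and since $F$ is injective, $\bP'=F\circ\bP$ is d.p.-equivalent to $\bP$ by Theorem~\ref{th:dp}(i). This settles every claim of part (i) except the assertion that $\bP'$ can be realised in $\SOL$.

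For the definability argument I would first record that the indicator $\chi_{\mathcal{K}}(G)\in\{0,1\}$ is $\SOL$-definable: if $\theta$ is the defining sentence of $\mathcal{K}$, then $\sum_{S\subseteq V:(S=\emptyset)\wedge\theta}1$ evaluates to $1$ on $\mathcal{K}$ and to $0$ otherwise. This permits candidates of the shape $\bP'(G;\bX)=\chi_{\mathcal{K}}(G)\cdot\bQ(G;\bX)+(1-\chi_{\mathcal{K}}(G))\cdot R(G;\bX)$, which already equals $\bQ$ on $\mathcal{K}$ for any $\SOL$-definable $R$. The remaining requirement is global d.p.-equivalence to $\bP$. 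On every $\bP$-class lying entirely off $\mathcal{K}$ the term $R$ must separate graphs exactly as $\bP$ does while avoiding collision with the $\bQ$-values produced on $\mathcal{K}$; this is easily arranged by taking $R$ to be $\bP$ tagged into a disjoint sector by means of an auxiliary indeterminate, so that off-class values never coincide with the untagged $\bQ$-values.

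The hard part, and the main obstacle, is the behaviour on $\bP$-classes that \emph{straddle} $\mathcal{K}$ and its complement. For such a class the $\mathcal{K}$-members are forced to the single value $\bQ(G)=H(\bP(G))$, so d.p.-equivalence demands that the $\bar{\mathcal{K}}$-members receive exactly this same value; in other words $R$ must realise the correspondence $H$, which ties the value of $\bP$ on one graph to the value of $\bQ$ on a possibly different graph of $\mathcal{K}$. Reproducing $H$ by an $\SOL$-expression, which quantifies only over relations on the single input graph, is the crux: the very tagging that handled fully-off classes is what illegitimately separates the two sides of a straddling class. I would attack this through the Normal Form Theorem~\ref{th:normal-form}, rewriting both $\bP$ and $\bQ$ as multiple generating functions of $\SOL$-definable relations and matching their coefficient parameters uniformly; when $\bP$ already distinguishes $\mathcal{K}$ from its complement there are no straddling classes and the patching above is itself $\SOL$-definable and d.p.-equivalent, so the genuine work is precisely to absorb $H$ on straddling classes into the $\SOL$-definition.

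For part (ii) I would argue computability directly from $F$. If $\mathcal{K}$ is decidable then $\chi_{\mathcal{K}}$ is computable, and by Definition~\ref{def:computable}(ii) the range of $\bQ$ is decidable; hence, given $G$, one can test whether $\bP(G)\in R$ and, if so, search for a witness $G_0\in\mathcal{K}$ with $\bP(G_0)=\bP(G)$ and output $\bQ(G_0)=H(\bP(G))$, exactly as in the proof of Theorem~\ref{th:dp}(iii), while off $R$ one emits the fixed injective coding. Each step is an effective operation on polynomials together with a membership test and a witness search, so computability of $\mathcal{K}$, $\bP$ and $\bQ$ transfers to $\bP'$; for the exponential-time bound I would instead invoke the $\SOL$-definable form of part (i), whose standard evaluation runs in exponential time once $\mathcal{K}$, $\bP$ and $\bQ$ do.
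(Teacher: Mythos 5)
Your first paragraph settles the semantic content of part (i) correctly, and in fact more carefully than the paper does: the map $H:\bP(G;\bX)\mapsto\bQ(G;\bX)$ for $G\in\mathcal{K}$ is a well-defined injection on $R=\{\bP(G;\bX):G\in\mathcal{K}\}$ by d.p.-equivalence on $\mathcal{K}$, and any injective extension $F$ of $H$ gives $\bP'=F\circ\bP$ with $\bP'=\bQ$ on $\mathcal{K}$ and $\bP'$ d.p.-equivalent to $\bP$ on all graphs. But the theorem also demands that $\bP'$ be $\SOL$-definable, and there your proposal stops: you isolate the obstacle (realising $H$ on $\bP$-classes that straddle $\mathcal{K}$ by an expression quantifying only over relations on the input graph) and propose to attack it via Theorem \ref{th:normal-form}, without carrying this out; so part (i) is not proved. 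Part (ii) has a second, independent gap: to compute your $\bP'(G;\bX)$ for $G\notin\mathcal{K}$ you must first decide whether $\bP(G;\bX)\in R$, i.e.\ whether this value is attained on $\mathcal{K}$; Definition \ref{def:computable}(ii) only makes the range of $\bP$ over \emph{all} graphs decidable, so your witness search terminates when a witness exists but runs forever when none does.

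The comparison with the paper is instructive. The paper's proof is exactly the candidate you considered and rejected: $\bP'(G;\bX)=\bQ(G;\bX)$ if $G\in\mathcal{K}$ and $\bP'(G;\bX)=\bP(G;\bX)$ otherwise (your $\chi_{\mathcal{K}}\cdot\bQ+(1-\chi_{\mathcal{K}})\cdot\bP$ with no tagging), for which $\SOL$-definability and computability are immediate; it then declares d.p.-equivalence to $\bP$ ``on all graphs'' to be straightforward. Your reservation about straddling classes shows that this declaration fails in general: take $\mathcal{K}$ the ($\SOL$-definable) class of graphs of even order, $\bP(G;X)=X^{n(G)}$ and $\bQ(G;X)=X^{n(G)+1}$, which are d.p.-equivalent everywhere; the paper's $\bP'$ assigns the value $X^{5}$ both to every $4$-vertex graph and to every $5$-vertex graph, so $\bP'$ is not d.p.-equivalent to $\bP$. (The conclusion itself survives in this instance, e.g.\ via $X^{n(G)+1}$ on even order and $X^{2n(G)}$ on odd order, but that is not the paper's construction.) So the step you call the crux is a genuine gap in the paper's own argument, not an artifact of your route: the paper's proof establishes the theorem only under the weaker reading that $\bP'$ be d.p.-equivalent to $\bP$ \emph{on} $\mathcal{K}$, where it is trivial, while under the global reading that both you and the paper's proof text adopt, neither your proposal nor the paper's proof is complete.
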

\begin{proof}
(i):
We define
$$
\bP'(G;\bX) =
\begin{cases}
\bQ(G;\bX) \mbox{  if  } G \in \mathcal{K} \\
\bP(G;\bX) \mbox{  else  }. 
\end{cases}
$$
It is straightforward that 
$\bP(G;\bX)$ and
$P'(G;\bX)$ are d.p.-equivalent on all graphs, and satisfies the equality of the coefficients.
To see that 
$P'(G;\bX)$ is $\SOL$-definable we note that a case distinction given by a $\SOL$-definable class $\mathcal{K}$
is also $\SOL$-definable.

(ii):
The computability  of $P'(G;\bX)$ and its complexity statement follow  immediately 
from the computability  and complexity assumptions.
\end{proof}

\subsection{Consistency and the recognition problem}

Given a closed formula  $\phi$ in a logical system
consistency of $\phi$ asks whether there exists a structure $\fA$ such that
$\fA \models \phi$. If we consider $\hat{\phi}$ as a Boolean graph parameter, this can be expressed
as asking whether there is a graph $G$ such that $\hat{\phi}(G) =1$.

The generalization of consistency for $\cR$-valued graph parameters $\bP$ and a value $p \in \cR$ 
asks whether there is a graph $G$ such that $\bP(G) =p$.
In the case of the chromatic polynomial, H. Wilf in \cite{ar:Wilf1973} 
calls this the {\em recognition problem}.
We assume that H. Wilf had a constructive answer in mind which was not only algorithmic but 
algebraic and qualitative. If the parameter is $\bP(G;X) = X^{n(G)}$ 
the expected answer says: Given $p \in \Z[X]$ there is a graph $G$ such that 
$\bP(G;X)=p$ iff $p$ is monic and consists of
exactly one monomial $X^n$ with exponent $n \geq 1$.

In finite model theory consistency is computationally (recursively) enumerable,
and computable, provided there is a bound $b_{\phi} \in \N$ on the size
of the smallest model of $\phi$.

For a $\SOL$-definable graph polynomial  $\bP(G;\bX)$ such a bound always exists, hence the recognition problem
is decidable. To give a syntactic description of the form of $p = \bP(G;X)$, provided $G$ exists,
is a difficult problem, and wide open even for the case of the characteristic or the chromatic polynomial.
For a discussion  of the recognition problem, cf. \cite{ar:KotekMakowskyRavve2017arxiv}.

\subsection{Closure properties}

Here we look at closure properties of $\SOLEVAL$ under
reducibilities via distinctive power. 
Clearly, $\SOLEVAL$ is not closed under the relation $\leq_{d.p.}$ and $\leq{s.d.p.}$.
If $\bP(G;\bX)$ and $\bQ(G;\bY)$ are two graph polynomials with
$\bP(G;\bX) \leq_{d.p.} \bQ(G;\bY)$, and one of them is in $\SOLEVAL$, the other still may not be in $\SOLEVAL$.
However,
we have defined $\SOLEVAL$ to be closed under substitutions of indeterminates
by elements of the underlying polynomial ring. 
Hence we get:
\begin{proposition}
\label{pr:closure}
\begin{enumerate}[(i)]
\item
If $\bP(G;\bX) \preceq_{subst} \bQ(G;\bY)$ and
$\bQ(G;\bY) \in \SOLEVAL$, then
$\bP(G;\bY) \in \SOLEVAL$. 
\item
Let $\bP(G; X, \bY), \mathbf{R}(G;\bY) \in \SOLEVAL$ with indeterminates $X$ and $\bY$.
Then the result of substituting $\mathbf{R}(G;\bY)$ for $X$ in $\bP(G; X, \bY)$, $\bP(G; \mathbf{R}(G;\bY), \bY)$ 
is also in $\SOLEVAL$.
\item
If $\bP(G;\bX) \preceq_{prefactor} \bQ(G;\bY)$ using similarity functions in $\SOLEVAL$,
i.e.,
there are similarity functions $f(G; \bX), g_1(G; \bX), \ldots, g_{m_2}(G; \bX) \in \SOLEVAL$
such that
$
\bP(G; \bX) = f(G; \bX) \cdot \bQ(G; g_1(G; \bX), \ldots , g_{m_2}(G; \bX))
$
and $\bQ(G;\bY) \in \SOLEVAL$, then $\bP(G;\bY) \in \SOLEVAL$. 
\end{enumerate}
\end{proposition}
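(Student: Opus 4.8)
The plan is to reduce all three parts to the two primitive closure clauses in the definition of $\SOLEVAL$: closure under substitution of an indeterminate by an element of the polynomial ring (clause (3), equivalently the informal clause allowing substitution of indeterminates by graph polynomials) and closure under multiplication (clause (2)). Accordingly I would prove part (ii) first, as the genuine substitution lemma, and then obtain (i) as a special case and (iii) by combining (ii) with closure under multiplication.

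For part (ii), fix terms $t_{\bP}, t_{\mathbf{R}} \in \SOLEVAL(G)$ witnessing the $\SOL$-definability of $\bP(G;X,\bY)$ and $\mathbf{R}(G;\bY)$, so that $t_{\bP}(G) = \bP(G;X,\bY)$ and $t_{\mathbf{R}}(G) = \mathbf{R}(G;\bY)$ for every $G$. For each fixed $G$ the value $\mathbf{R}(G;\bY)$ is a concrete element of $\cR[\bX]$, so substituting it for the indeterminate $X$ in $\bP(G;X,\bY)$ is exactly an instance of closure clause (3). The point that needs care is the \emph{uniformity} required for $\SOL$-definability: I would perform the substitution at the syntactic level, forming the term $t_{\bP}[X := t_{\mathbf{R}}]$ obtained by replacing every occurrence of the base element $X$ in $t_{\bP}$ by the term $t_{\mathbf{R}}$, and then argue by structural induction on $t_{\bP}$ that this term again lies in $\SOLEVAL(G)$ and evaluates to $\bP(G;\mathbf{R}(G;\bY),\bY)$ for all $G$ simultaneously. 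The induction is routine: the base and Boolean/arithmetic clauses are immediate, and in the small-sum/product and large-sum clauses the defining $\SOL$-formulas range over tuples and relations on $V(G)$ and do not mention the indeterminate $X$, so the substitution neither captures a bound variable nor disturbs the formula, and the subterm $t_{\mathbf{R}}$ can be plugged in unchanged.

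For part (i), I would unfold $\bP \preceq_{subst} \bQ$ to the identity $\bP(G;\bY) = \bQ(G; g_1(G;\bY), \ldots, g_r(G;\bY))$ with prefactor $1$, where the substituted similarity functions $g_i$ are $\SOLEVAL$-definable (equivalently, polynomial-ring valued); applying part (ii) once for each indeterminate of $\bQ$ then yields $\bP \in \SOLEVAL$. For part (iii), prefactor reducibility gives $\bP(G;\bX) = f(G;\bX)\cdot \bQ(G; g_1(G;\bX), \ldots, g_r(G;\bX))$ with $f$ and the $g_i$ in $\SOLEVAL$; I would first apply part (ii) repeatedly to conclude that $\bQ(G; g_1(G;\bX), \ldots, g_r(G;\bX)) \in \SOLEVAL$, and then invoke closure under multiplication (clause (2)) to multiply by $f \in \SOLEVAL$, obtaining $\bP \in \SOLEVAL$.

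The only real obstacle is the uniformity bookkeeping in part (ii): one must be sure that the purely syntactic operation of replacing the indeterminate $X$ by a full $\SOLEVAL$ term stays inside $\SOLEVAL$ and commutes with evaluation over \emph{all} graphs at once, rather than merely for each fixed $G$ separately. The structural induction above settles this, the key observation being that summations and products in $\SOLEVAL$ quantify over graph elements and their defining formulas are independent of the indeterminates, so no variable capture can occur. One caveat I would flag explicitly: the hypotheses must be read as requiring the substitution functions (and, in (iii), the prefactor) to be themselves $\SOLEVAL$-definable; for an arbitrary similarity function the conclusion can fail, since such a function need not be $\SOL$-definable.
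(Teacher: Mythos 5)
Your proposal is correct, and its architecture matches the paper's: there too, (ii) is the substantive claim, (i) is discharged by the substitution clause in the definition of $\SOLEVAL$, and (iii) follows from (ii) together with closure under multiplication. The one genuine difference is the induction variable in (ii): the paper's proof consists of the single sentence that (ii) ``is shown by induction on the definition of $\mathbf{R}(G;\bY)$'', whereas you induct on the structure of the term defining $\bP$, carrying $t_{\mathbf{R}}$ along as an unanalyzed block. Your choice is arguably the more robust one: for each fixed $G$, substituting $\mathbf{R}(G;\bY)$ for $X$ is a ring homomorphism, so it commutes with every formation rule of $t_{\bP}$ (addition, subtraction, multiplication, small and large sums/products over graph elements), and the induction goes through uniformly; your remark that the defining formulas quantify only over vertices and relations and never mention indeterminates --- so no capture can occur, $t_{\mathbf{R}}$ being a closed term --- is exactly the bookkeeping required. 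An induction on $t_{\mathbf{R}}$, by contrast, is smooth for the base and arithmetic clauses but stalls when $\mathbf{R}$ is itself built by a large sum over relations: $\bP$ is nonlinear in $X$, so $\bP(G;\sum_{B:\phi(B)}t',\bY)$ does not distribute over the graph-dependent index set, and one is forced to introduce fresh indeterminates and nest an induction of your kind anyway. Finally, your caveat that the $g_i$ (and, in (iii), the prefactor $f$) must themselves be in $\SOLEVAL$ is well taken and consistent with the proposition's intent; note only that the paper's direct appeal to the definition in (i) literally covers substitution by fixed elements of the coefficient ring, while your route through (ii) is what handles graph-dependent $\SOLEVAL$ similarity functions such as $X^{n(G)}$.
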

\begin{proof}
(i) follows from the definition of $\SOLEVAL$.
\\
(ii) is shown by induction on the definition of $\mathbf{R}(G;\bY)$.
\\
(iii) follows from (ii).
\end{proof}

\section{On the location of zeros of graph polynomials}
\label{se:zeros}
\subsection{Roots of univariate graph polynomials}
\label{csl-roots}
The literature on graph polynomials mostly got its inspiration from the successes in studying the  chromatic polynomial
and its many generalizations and the
characteristic polynomial of graphs. 
In both cases the roots of graph polynomials are given much attention and are meaningful when these polynomials model physical reality.

A complex number $z \in \C$ is a root  of a univariate graph polynomial $P(G;X)$
if there is a graph $G$ such that $P(G;z)=0$.
It is customary to study the
location of the roots of univariate  graph polynomials. 
Prominent examples, besides
the chromatic polynomial, the matching polynomial and the
characteristic polynomial and its Laplacian version,
are the independence polynomial, the domination polynomial and the vertex cover polynomial.

For a fixed univariate graph polynomial $P(G;X)$ typical statements about roots are:
\begin{enumerate}
\item
For every $G$ the roots of $P(G;X)$ are real.
This is the univariate version of stability or Hurwitz stability for real polynomials. 
It is true for the characteristic and the matching polynomial, 
\cite{bk:CvetkovicDoobSachs1995,bk:LovaszPlummer86}.
Similarly, for every claw-free graph $G$ the roots of the independence polynomial are real,
\cite{ar:ChudnovskySeymour2007roots}.
Incidentally, by a classical theorem of I. Newton, if all the roots of a polynomial 
with positive coefficients are real, then its coefficients are unimodal.
\item
Assuming that all roots of $P(G;X)$ are real,
the (second) largest root has an  interesting combinatorial interpretation.
This is true for the characteristic polynomial where the second largest eigenvalue is related
to the
Cheeger constant, 
\cite[Chapter 4]{ar:AlonMilman1985,bk:BrouwerHaemers2012}.
\item
The multiplicity of a certain value $a$ as a root of $P(G;X)$ has an interesting interpretation.
For example, the multiplicity of $0$ as a root of the Laplacian polynomial is the number
of connected components of $G$, 
\cite[Chapter 1.3.7]{bk:BrouwerHaemers2012}.
\item
For every $G$ all real roots of $P(G;X)$ are positive (negative)
or the only real root is $0$.
The real roots are positive in the case of the chromatic polynomial and the clique polynomial,
and negative for the independence polynomial, 
\cite{bk:DongKohTeo2005,ar:HoedeLi94,ar:BrownHickmanNowakowski2004,ar:GoldwurmSantini2000,phd:Hoshino}.
\item
For every $G$ the roots of $P(G;X)$ are contained in a disk of radius
$\rho(d(G))$, where $d(G)$ is the maximal degree of the vertices of $G$.
This is true for the characteristic polynomial and its Laplacian version, \cite[Chapter 3]{bk:BrouwerHaemers2012}.
This is also 
the case for the chromatic polynomial,
\cite{bk:DongKohTeo2005,ar:Sokal01}, but the proof of this is far from trivial.
\item
For every $G$ the roots of $P(G;X)$ are contained in a disk of constant radius.
This is the case for the edge-cover polynomial, \cite{ar:CsikvariOboudi2011}.
For the unit disk this is the univariate version of Schur-stability.
\item
The roots of $P(G;X)$ are dense in the complex plane.
This is again true for the chromatic polynomial, the dominating polynomial and the independence
polynomial, \cite{bk:DongKohTeo2005,ar:Sokal04,ar:BrownHickmanNowakowski2004,phd:Hoshino}.
\end{enumerate}

In \cite{ar:MakowskyRavveBlanchard2014} we showed that the precise location of roots of univariate
$\SOL$-definable
graph polynomials is not a graph theoretic (semantic) property of graphs.
In the next subsection
we investigate whether stability, the multivariate analog the location of zeros,
of multivariate 
$\SOL$-definable graph polynomials is a semantic property.
A typical theorem from \cite{ar:MakowskyRavveBlanchard2014} is the following.
\begin{theorem}[{\cite[Theorem 4.22]{ar:MakowskyRavveBlanchard2014}}]
\label{th:main3}
For every univariate graph polynomial $P(G;X) \in \SOLEVAL$ there exists a univariate graph polynomial $Q(G;X)$
which is prefactor equivalent to $P(G;X)$ and the roots of $Q(G;X)$ are dense in $\C$.
Furthermore, if $P(G;X) \in \SOLEVAL$ so is $Q(G;X)$.
\end{theorem}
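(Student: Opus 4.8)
The plan is to obtain $Q$ by multiplying $P$ by a carefully chosen \emph{prefactor} $f(G;X)$ whose zeros, collected over all finite graphs, are already dense in $\C$. Since $f$ will depend only on $n(G)$, it is a similarity function, so prefactor equivalence with $P$ will be automatic; the only real work is to check that $f$, and hence $Q=f\cdot P$, lies in $\SOLEVAL$. The key point is that density of the roots of $Q$ is forced by $f$ alone, irrespective of what $P$ does.

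First I would build, for a graph $G$ with $n=n(G)$ vertices, a grid of candidate roots whose mesh tends to $0$ and whose extent grows with $n$. Working over $\cR=\C$ and using a linear order on $V(G)$ (exactly as in the definition of $X^{k(G)}$), let $r(u)=\sum_{w\le u}1$ be the rank of a vertex. For a tuple $(u_1,u_2,w_1,w_2)\in V(G)^4$ I set
$$a = 2\bigl(r(u_1)\,n + r(u_2)\bigr) - (n^2+n), \qquad b = 2\bigl(r(w_1)\,n + r(w_2)\bigr) - (n^2+n),$$
and take the factor $\bigl(nX-(a+b\,i)\bigr)$, whose zero is $\tfrac{a+bi}{n}$, and define
$$f(G;X) = \prod_{(u_1,u_2,w_1,w_2)\in V(G)^4}\bigl(nX-(a+b\,i)\bigr).$$
Because the rank map is a bijection $V(G)\to\{1,\dots,n\}$, the multiset of factors is independent of the chosen order, so $f$ is well defined and depends only on $n(G)$; it is thus a similarity function. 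As $(r(u_1),r(u_2))$ ranges over $\{1,\dots,n\}^2$, the quantity $r(u_1)n+r(u_2)$ runs through every integer in $\{n+1,\dots,n^2+n\}$, so $a$ (and likewise $b$) takes all even integers in an interval of length $\approx 2n^2$; hence the zeros $\tfrac{a+bi}{n}$ form a grid of mesh $2/n$ whose extent grows like $n$ in each direction. Choosing for each large $n$ the edgeless graph on $n$ vertices, I conclude that $\bigcup_G\{\text{zeros of }f(G;X)\}$ is dense in $\C$, and therefore so is the zero set of $Q(G;X):=f(G;X)\cdot P(G;X)$, since every zero of $f$ is a zero of $Q$.

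Prefactor equivalence is then purely formal: $Q=f\cdot P$ exhibits $Q\preceq_{prefactor}P$ with prefactor $f$ and the identity substitution, while $P=(1/f)\cdot Q$ gives $P\preceq_{prefactor}Q$ using the reciprocal $1/f$, which is permitted because we allow similarity-function spaces closed under reciprocals, exactly as for the universal Tutte polynomial. Hence $P\sim_{prefactor}Q$.

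The main obstacle, and the only delicate part, is $\SOLEVAL$-definability of $f$. Here I would check that $n=\sum_v 1$ and each rank $r(u)=\sum_{w\le u}1$ are small sums, that $a$ and $b$ arise from these by the ring operations of $\SOLEVAL$ together with the constants $1,i\in\cR[\bX]$, so that each factor $nX-(a+b\,i)$ is an interpreted term, and finally that $f$ is the \emph{small} product of these terms over $V(G)^4$, which is admissible by the closure rule for small products. Since $P\in\SOLEVAL$ and $\SOLEVAL$ is closed under multiplication, $Q=f\cdot P\in\SOLEVAL$. The genuinely subtle issue is that the defining expression invokes a guessed linear order while the resulting polynomial must be order-invariant; I would spell out that permuting the order merely permutes the factors (by the bijectivity of the rank map), so the value of $f$ is unchanged, confirming that $f$, and therefore $Q$, is a bona fide $\SOLEVAL$-definable graph polynomial.
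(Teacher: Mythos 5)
Your proof is correct, and at the top level it follows the same strategy as the paper: multiply $P(G;X)$ by an $\SOL$-definable similarity prefactor whose roots, collected over all graphs, are dense in $\C$, then invoke closure of $\SOLEVAL$ under products, with the reverse prefactor reduction using the reciprocal $1/f$ (exactly as the paper's conventions for rational similarity functions allow). The genuine difference is where the dense-roots prefactor comes from. The paper's proof is a two-liner that cites Lemma~\ref{le:dense-C} (Lemma 4.21 of \cite{ar:MakowskyRavveBlanchard2014}), which supplies a similarity polynomial $D_{\C}(G;X)$ of \emph{fixed degree} $48$ with dense roots, and sets $Q=D_{\C}\cdot P$. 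You instead construct the prefactor from scratch: a grid of roots $\tfrac{a+bi}{n}$ of mesh $2/n$ and extent growing like $n$, realized as a product over $V(G)^4$ of linear factors whose coefficients are built from vertex ranks; your density and order-invariance arguments for this construction are sound, and the rank/small-product analysis correctly places $f$ in $\SOLEVAL$. What this buys is self-containedness---you effectively reprove a version of the lemma rather than citing it---at the cost of a prefactor of degree $n(G)^4$ instead of constant degree, so your $Q$ is a much more drastic algebraic perturbation of $P$ than the paper's. Two minor remarks: your $f$ has Gaussian-integer coefficients, so $Q$ lives in $\C[X]$ even when $P$ has integer coefficients; this is permitted since $\SOLEVAL$ is defined over $\cR=\C$, but if one wants real coefficients you could pair conjugate factors $\bigl(nX-(a+bi)\bigr)\bigl(nX-(a-bi)\bigr)$. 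Also, your explicit treatment of the auxiliary linear order and its invariance is exactly the detail the paper elides when it says an order ``is present.''
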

To show this we use \cite[Lemma 4.21]{ar:MakowskyRavveBlanchard2014}:
\begin{lemma}
\label{le:dense-C}
There exist an
$\SOL$-definable 
univariate similarity polynomial 
$D_{\C}(G;X)$ of degree $48$ such that all its roots are dense in $\C$.
\end{lemma}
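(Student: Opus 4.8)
The plan is to exhibit an explicit family of fixed-degree polynomials whose coefficients are integer polynomials in the three parameters $n(G)$, $m(G)$, $k(G)$, and to show that the union of their roots over all graphs $G$ is dense in $\C$ while the family consists of $\SOL$-definable similarity polynomials of degree $48$. Because the coefficients will depend only on $n(G)$, $m(G)$, $k(G)$, invariance under similarity is automatic; and because the degree is fixed, $\SOL$-definability follows directly from the closure rules of $\SOLEVAL$. So the real content is the analytic density statement, together with bookkeeping of the degree.

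First I would reduce the problem to producing ratios of $\SOL$-definable integer-valued similarity parameters that are jointly dense. The key observation is that the quartic
\[
Q_{a,b,c}(X) = c^4 X^4 + 2c^2(b^2 - a^2) X^2 + (a^2 + b^2)^2
\]
has integer coefficients whenever $a,b,c \in \Z$, and its four roots are exactly $\pm \frac{a}{c} \pm i\frac{b}{c}$: multiplying the monic form by $c^4$ clears denominators without moving the roots, and the built-in sign symmetry already populates all four quadrants of $\C$. Hence it suffices to choose integer-valued similarity parameters $a(G), b(G), c(G)$, each a polynomial in $n(G), m(G), k(G)$, so that the pairs $\bigl(a(G)/c(G),\ b(G)/c(G)\bigr)$, ranging over all graphs $G$, are dense in $[0,\infty)^2$; density there, combined with the sign symmetry, yields density in all of $\C$.

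The main obstacle is verifying this joint density over all of $[0,\infty)^2$, since the realizable triples $(n,m,k)$ are constrained (for instance $1 \le k \le n$ and $n-k \le m \le \binom{n-k+1}{2}$). A single assignment such as $c = k(G)$, $a = m(G)$, $b = n(G)$ makes both ratios $m/k$ and $n/k$ unbounded, and, as $n$ grows while $k$ and $m$ vary, fills a dense subset of a sub-region of $[0,\infty)^2$ (using that $m$ ranges up to $\sim n^2$ in unit steps, so $m/k$ has vanishing gaps). To cover all of $[0,\infty)^2$ rather than a single sub-region I would take the product of several such quartics, using different parameter assignments together with their reciprocal versions (roots $\pm c/a \pm i\,c/b$, realized by the substitution $a \mapsto cb$, $b \mapsto ca$, $c \mapsto ab$) to reach small ratios as well as large ones. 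This is where the detailed case analysis lives, and matching the total degree to exactly $48$ is then a matter of choosing the right number of factors and, if necessary, padding by a fixed factor such as a power of $X$, which only adds the root $0$ and cannot destroy density.

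Finally I would record $\SOL$-definability, which is the easy part here. Each of $n(G), m(G), k(G)$ is $\SOL$-definable (e.g. $n(G)=\sum_{v\in V}1$), so every integer polynomial in them lies in $\SOLEVAL$; since $X^j \in \cR[X]$ for each fixed $j$ and $\SOLEVAL$ is closed under sums, products, and substitution, each coefficient-times-monomial term, and hence the whole fixed-degree polynomial $D_{\C}(G;X)$, is $\SOL$-definable. Invariance under similarity is immediate because the coefficients are functions of $n(G), m(G), k(G)$ alone, so $D_\C(G;X)$ is the required $\SOL$-definable similarity polynomial of degree $48$ whose roots are dense in $\C$.
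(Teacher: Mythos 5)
First, a framing remark: the paper does not actually prove this lemma here --- it imports it verbatim as \cite[Lemma 4.21]{ar:MakowskyRavveBlanchard2014} and only uses it (multiplying by $D_{\C}$) to prove Theorem \ref{th:main3}. So your proposal can only be judged on its own terms and against the evident intended construction. Your skeleton is sound and almost certainly of the right shape: the quartic $c^4X^4+2c^2(b^2-a^2)X^2+(a^2+b^2)^2$ does have roots $\pm a/c \pm i\,b/c$ (I checked the algebra), coefficients that are integer polynomials in $n(G),m(G),k(G)$ give similarity invariance and $\SOL$-definability for free via the closure rules of $\SOLEVAL$, and the sign symmetry correctly reduces everything to density of the ratio pairs in $[0,\infty)^2$; the target degree $48=12\times 4$ is even consistent with a product over the $3!=6$ permutation assignments of $(n,m,k)$ together with their $6$ reciprocal versions.

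However, there is a genuine gap, and it sits exactly where the mathematical content of the lemma lives: the joint density of the ratio pairs over all graphs is asserted, not proven, and the one mechanism you exhibit concretely --- the assignment $(a,b,c)=(m,n,k)$ plus its reciprocal --- provably fails. No root of either family ever comes near $\tfrac12+2i$: the first family has roots $\pm m/k \pm i\,n/k$, and every graph satisfies $m\ge n-k$, so the (positive) real part is always at least the imaginary part minus $1$, which excludes real part near $\tfrac12$ with imaginary part near $2$; the reciprocal family has roots $\pm k/m \pm i\,k/n$, and $k\le n$ forces all imaginary parts into $[-1,1]$. To cover such points you need genuinely different assignments, e.g.\ $(a,b,c)=(k,m,n)$, for which $k=n/2$, $m=2n$ is realizable, and you must then verify that finitely many regions, each constrained by $k\le n$ and $n-k\le m\le \binom{n-k+1}{2}$, densely cover the whole closed quadrant --- this case analysis is the proof, and it is missing. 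It can be completed, or sidestepped entirely: restrict attention to graphs $pK_2\sqcup qK_3\sqcup sK_1$, observe that $q=m-n+k$, $p=3n-2m-3k$, $s=m-2n+3k$ are integer linear similarity parameters which on this family realize every triple $(p,q,s)\in\N^3$, and use a single quartic with $a=p$, $b=q$, $c=s^2+1$; the pairs $(a/c,b/c)$ are then dense in $[0,\infty)^2$, the extra roots contributed by graphs outside this family are harmless because density concerns the union of root sets over all graphs, and $c\ge 1$ also repairs a minor defect of your construction, namely that a vanishing leading coefficient $c^4$ (e.g.\ a denominator involving $m$ on edgeless graphs) would break the claim that the degree is exactly $48$ for every graph.
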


\begin{proof}[Proof of Theorem \ref{th:main3}:]
We use Lemma \ref{le:dense-C} and put
$$
Q(G;X) = D(G;X) \cdot P(G;X)
$$
and the fact that $\SOLEVAL$ is closed under products.
\end{proof}

\subsection{Stable multivariate graph polynomials}
\label{se:stable}
\label{se:intro-stable}
A multivariate polynomial is {\em stable}\footnote{
Multivariate analogs of location of zeros of polynomials are the various
halfplane properties aka stability properties.

In engineering and stability theory, a square matrix $A$ is called stable matrix 
(or sometimes Hurwitz matrix) if every eigenvalue of $A$ has strictly negative real part. 
These matrices were first studied in the landmark paper \cite{ar:Hurwitz1895} in 1895.
The Hurwitz stability matrix plays a crucial part in control theory. 
A system is stable if its control matrix is a Hurwitz matrix. 
The negative real components of the eigenvalues of the matrix represent negative feedback. 
Similarly, a system is inherently unstable if any of the eigenvalues have positive real components, 
representing positive feedback.
In the engineering literature, one also considers Schur-stable univariate polynomials, which are polynomials
such that all their roots are in the open unit disk, see for example \cite{ar:wang-schur-stability1994}.
} 
if the imaginary part of its zeros is negative,
and it is {\em Hurwitz-stable} if the real part of its zeros is negative.
Analogously, it is {\em Schur-stable} if all its roots are in the open unit ball.
Recently, stable and Hurwitz-stable polynomials have attracted the attention of
combinatorial research. In \cite{ar:COSW-2004} the study of graph and matroid invariants
and their various stability properties was initiated. 
The more recent paper \cite{ar:HirasawaMurasugi2013} 
does the same for knot and link invariants.
Due mainly to the recent work of 
J. Borcea and P. Br\"and\'en \cite{ar:BorceaBraenden2008}, see also \cite{ar:Wagner2011}, a 
very successful multivariate generalization
of stability of polynomials has been developed.
To quote from the abstract of \cite{pr:Vishnoi-2013}:
\begin{quote}
Problems in many different areas of mathematics reduce to questions about the zeros of
complex univariate and multivariate polynomials. Recently, several significant and seemingly
unrelated results relevant to theoretical computer science have benefited from taking this route:
they rely on showing, at some level, that a certain univariate or multivariate polynomial has
no zeros in a region. This is achieved by inductively constructing the relevant polynomial
via a sequence of operations which preserve the property of not having roots in the required
region.
\end{quote}
Further on, \cite{pr:Vishnoi-2013} gives the following applications  of stable polynomials to theoretical computer science:
A new proof of the van der Waerden conjecture about the permanent of doubly stochastic matrices, \cite{pr:Gurvits2006};
various applications to the traveling salesman problem, 
\cite{pr:Vishnoi2012}, \cite{ar:Pemantle2012}; 
applications to the Lee-Yang theorem 
in statistical physics that shows the lack of phase
transition in the Ising model, 
\cite{pr:SinclairSrivastava2013},
 and more.
\cite{borcea2009negative} discuss various sampling problems and show, among other things, 
that the generating polynomial of spanning trees of a graph
is stable, see also \cite{anari2016monte}.
Let $m,n \in \N$ be indices.
Let 
$\mathbf{X} =( X_1, \ldots , X_n)$ 
and
$\mathbf{Y} =( Y_1, \ldots , Y_m)$ 
be $n+m$ indeterminates and $f(\bX,\bY) \in \C[\bX, \bY]$.
Let $\cH_u = \{ a \in \C: \Im(a) > 0\}$
and $\cH_r = \{ a \in \C: \Re(a) > 0\}$ be the upper, respectively right half-plane of $\C$.

\begin{Definitions}
\begin{enumerate}
\item
$f$ is homogeneous if all its monomials have the same degree.
\item
$f$ is multiaffine if each indeterminate occurs at most to the first
power in $f$.
\item
$f(\bX,\bY) \in \C[\bX, \bY]$ is {\em stable} if $f \equiv 0$ or, whenever $\mathbf{a} \in \cH_u^{n+m}$, then
$f(\ba) \neq 0$. If additionally $f(\bX, \bY) \in \R[\bX,\bY]$, it is real stable.
\item
$f$ is {\em  Hurwitz-stable} if $f \equiv 0$ or, whenever $\mathbf{a} \in \cH_r^{n+m}$, then
$f(\ba) \neq 0$.
\item
$f$ is {\em stable with respect to $\bX$} if for every $\bb \in \cH^{m}$ either 
$f(\bX, \bb) \equiv 0$ or whenever $\ba \in \cH_u^{n}$ then
$f(\ba, \bb) \neq 0$.
\item
Let $\cK$ be class of finite graphs.
A graph polynomial $P(G; \mathbf{X})$ is {\em stable on $\cK$} if for every
graph $G \in \cK$ the polynomial $P(G;\bX) \in \C[\bX]$ is stable.
\end{enumerate}
\end{Definitions}

\begin{remark}
If $f(\bX,\bY)$ is stable, it is stable with respect to $\bX$, but not conversely. 
\end{remark}

\begin{Examples}
\label{ex:stable}
\begin{enumerate}
\item
Univariate polynomials are stable iff they have only real roots.
\item
The characteristic polynomial $P_{A}$ and its Laplacian version $P_L$ 
are stable because they have only real roots.
\item
Let $\mathrm{Tree}(G;X) = \sum_{T \subseteq E(G)} \prod_{e \in T} X_e$,
be the tree polynomial, 
where $T$ ranges over all trees of $G=(V(G),E(G))$.
$\mathrm{Tree}(G;X)$ is Hurwitz-stable, \cite{ar:COSW-2004}.
\item
\label{ex:sokalization}
Let $G=(V(G),E(G))$ be a graph and let $\bX_E= (X_e: e \in E(G))$ be commutative indeterminates.
Let $S$ be a family of subsets of $E(G)$, i.e., $S \subset \wp(E(G))$ and 
let $\mathrm{P}_S(G;\bX_E) = \sum_{A \in S} \prod_{e \in A} X_e$. 
If $S$ is the family of trees of $E(G)$
then $\mathrm{P}_S(G;\bX_E)$
is  a multivariate
version of the tree polynomial,
which is also Hurwitz-stable, cf. \cite[Theorem 6.2]{ar:Sokal2005a}.
\item
In \cite[Question 1.3]{ar:COSW-2004} it is asked for which $S$ is the polynomial $\mathrm{P}_S(G;\bX_E)$
Hurwitz-stable. Actually they ask the corresponding question for matroids 
$$M=(E(M),S(M)).$$
\item
In \cite[Section 16]{ar:HirasawaMurasugi2013} the stability of multivariate knot polynomials is studied.
\end{enumerate}
\end{Examples}

\subsection{Sufficient conditions for stability}
The characteristic polynomial of a symmetric real matrix is stable. Stable polynomials are often determinant like
in the following sense:

\begin{theorem}[Criteria for Stability]
\label{th:determinants}
Let $\bX =(X_1, \ldots , X_m)$ be indeterminates,
and $\mathcal{X}$ be the diagonal matrix of $n$ indeterminates with $(\mathcal{X})_{i,i} = X_i$.
\begin{enumerate}
\item \label{th:det-1}
(\cite[Proposition 2.4]{ar:BorceaBraenden2008})
For $i \in [m]$ let each $A_i$ be a positive semi-definite Hermitian $(n \times n)$-matrix
and let $B$ be Hermitian. Then
$$
f(\bX) = \det(X_1 A_1 + \ldots + X_m A_m +B) \in \R[\bX]
$$
is stable.
\item \label{th:det-2}
(\cite[Theorem 2.2]{ar:HeltonVinnikov2007})
For $m=2$ and $f(X_1,X_2) \in \R[X_1,X_2]$ we have 
$f(X_1, X_2)$ is stable iff there are Hermitian matrices $A_1, A_2, B$
with $A_1, A_2$ positive semi-definite such that
$$
f(X_1,X_2) = \det(X_1 A_1 +  X_2 A_2 +B). 
$$
\item \label{th:det-3}
(\cite[after Theorem 4.2]{ar:Braenden2007})
If $A$ is a Hermitian $(m \times m)$ matrix then the polynomials $\det(\mathcal{X} + A)$ and
$\det(\mathbf{1} + A \cdot \mathcal{X})$ are real stable.
\end{enumerate}
\end{theorem}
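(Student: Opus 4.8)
The three parts are all quoted from the literature, so my plan is to prove the first directly and reduce as much of the rest to it as possible, isolating where genuine outside machinery is unavoidable.

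For part (\ref{th:det-1}) the plan is a direct Hermitian-form argument. Fix $\ba \in \cH_u^m$ and write $a_j = \alpha_j + i\beta_j$ with $\beta_j > 0$. Suppose $\left(\sum_j a_j A_j + B\right)v = 0$ for some $v \neq 0$; forming $v^{*}\left(\sum_j a_j A_j + B\right)v = 0$ and taking imaginary parts, I would use that $v^{*}Bv \in \R$ (as $B$ is Hermitian) and that $v^{*}A_j v \ge 0$ (as $A_j$ is positive semi-definite) to obtain $\sum_j \beta_j\,(v^{*}A_j v) = 0$. Since each summand is non-negative and $\beta_j > 0$, this forces $v^{*}A_j v = 0$, and writing $A_j = M_j^{*}M_j$ gives $\|M_j v\| = 0$, hence $A_j v = 0$, for every $j$. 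Substituting back yields $Bv = 0$ as well, so $v$ lies in the common kernel of $A_1,\dots,A_m,B$; but then $\left(\sum_j X_j A_j + B\right)v = 0$ identically and $f \equiv 0$. Thus either $f \equiv 0$ or $f$ has no zero in $\cH_u^m$, which is stability. That $f \in \R[\bX]$ is immediate, since for real arguments $\sum_j x_j A_j + B$ is Hermitian and its determinant is real.

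For part (\ref{th:det-3}) I would observe that the first assertion is the special case of (\ref{th:det-1}) with $A_j = E_{jj}$ (the diagonal coordinate projections, which are positive semi-definite) and $B = A$, since $\sum_j X_j E_{jj} + A = \mathcal{X} + A$. For the second assertion I would give an argument mirroring (\ref{th:det-1}): at any $\ba \in \cH_u^m$ every $a_j \neq 0$, so $\mathcal{X}$ is invertible and $\det(\mathbf{1} + A\mathcal{X}) = \det(\mathcal{X})\,\det(A + \mathcal{X}^{-1})$ with $\det(\mathcal{X}) = \prod_j a_j \neq 0$. Now $\mathcal{X}^{-1} = \mathrm{diag}(1/a_j)$ and $\Im(1/a_j) < 0$ whenever $\Im(a_j) > 0$; repeating the imaginary-part computation for $w^{*}(A + \mathcal{X}^{-1})w = 0$ shows $\sum_j \Im(1/a_j)\,|w_j|^2 = 0$, and since every term is now non-positive this forces $w = 0$. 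Hence $A + \mathcal{X}^{-1}$ is invertible and $\det(\mathbf{1} + A\mathcal{X}) \neq 0$; as this polynomial has constant term $\det(\mathbf{1}) = 1 \neq 0$, it is not identically zero, so it is stable.

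Part (\ref{th:det-2}) is where the real difficulty lies. Its ``if'' direction is just (\ref{th:det-1}) specialized to $m = 2$, so nothing new is needed. The ``only if'' direction — that every real stable bivariate polynomial admits a representation $\det(X_1 A_1 + X_2 A_2 + B)$ with $A_1, A_2$ positive semi-definite — is the deep theorem of Helton and Vinnikov, and I would not attempt to reprove it. It is equivalent to the two-dimensional Lax conjecture and rests on the theory of real algebraic curves: one homogenizes $f$, studies the real projective curve it defines, and constructs a definite (theta-function) determinantal representation from this geometric data. This is the genuine obstacle, and the reason the statement is quoted rather than derived from the elementary arguments used for the other two parts.
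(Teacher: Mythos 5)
You are correct, but your route differs fundamentally from the paper's: the paper gives no proof of Theorem \ref{th:determinants} at all --- all three items are imported from the literature with citations and are then used downstream (in fact only part (i) is used) as black boxes in the proofs of Theorems \ref{th:main-stable}, \ref{th:main-hu-stable} and \ref{th:sokalization}. You instead supply genuine proofs for (i) and (iii). Your argument for (i) is the standard one and is correct: taking imaginary parts of $v^{*}\bigl(\sum_j a_j A_j + B\bigr)v=0$ leaves $\sum_j \Im(a_j)\,v^{*}A_j v=0$, positive semi-definiteness forces $A_j v=0$ for all $j$ and then $Bv=0$, so either there is a common kernel vector (whence $f\equiv 0$, a degenerate case the paper's definition of stability explicitly allows and which many sources gloss over) or $f$ has no zero in $\cH_u^m$; realness of the coefficients follows because the matrix is Hermitian at real arguments. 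Your reduction of the first polynomial in (iii) to (i) via $A_j=E_{jj}$, $B=A$, and your treatment of $\det(\mathbf{1}+A\mathcal{X})$ via $\det(\mathcal{X})\det(A+\mathcal{X}^{-1})$ together with $\Im(1/a_j)<0$, are both sound. Your assessment of (ii) is also the right one: the ``if'' direction is (i) with $m=2$, while the ``only if'' direction is the Helton--Vinnikov theorem (equivalent to the two-dimensional Lax conjecture), which cannot be recovered by these elementary quadratic-form arguments and must be cited, exactly as the paper does. The one loose end is that for $\det(\mathbf{1}+A\mathcal{X})$ you prove stability but never check that the coefficients are real, which the claim ``real stable'' requires; this is a one-liner: for real $x$, $\overline{\det(\mathbf{1}+A\,\mathrm{diag}(x))}=\det(\mathbf{1}+\mathrm{diag}(x)A^{*})=\det(\mathbf{1}+A\,\mathrm{diag}(x))$, using $A^{*}=A$ and $\det(\mathbf{1}+MN)=\det(\mathbf{1}+NM)$. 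What your approach buys is a self-contained verification of everything this paper actually relies on; what the paper's citation-only presentation buys is brevity and correct attribution of the one genuinely deep ingredient, namely (ii).
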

\begin{theorem}[Criteria for Hurwitz-stability]
\label{th:Hurwitz}
\begin{enumerate}
\item
\label{th:homo}
(\cite{ar:WagnerWei2009})
If $f(\bX) \in \R[\bX]$ is  a real homogeneous  then $f(\bX)$ is stable iff  $f(\bX)$ is Hurwitz stable.
\item \label{th:det-4}
(\cite[Theorem 8.1]{ar:COSW-2004})
Let $A$ be a  complex $(r \times m)$-matrix, $A^*$ be its Hermitian conjugate, 
then  the polynomial in $m$-indeterminates
$$
Q(\bX)  = \det(A\mathcal{X}A^*)
$$
is multiaffine, homogeneous and Hurwitz-stable.
\item \label{th:det-5}
(\cite[after Theorem 4.2]{ar:Braenden2007})
If $B$ is a skew-Hermitian $(n \times n)$ matrix then $\det(\mathcal{X} + B)$ and $\det(\mathbf{1} + B \cdot \mathcal{X})$
are Hurwitz stable.
\item \label{th:per}
(\cite[Theorem 10.2]{ar:COSW-2004})
Let $A$ be a  real $(r \times m)$-matrix with non-negative entries.
Then  the polynomial in $m$-indeterminates
$$
Q(\bX)  = \per(AX) =
\sum_{S \subseteq [m], |S| =r} \per( A\mid_S) \prod_{i \in S} X_i
$$
is Hurwitz-stable.
\end{enumerate}
\end{theorem}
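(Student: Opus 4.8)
The four parts are collected known results, and the plan is to prove them in an order that exploits their dependencies: part~(\ref{th:homo}) is elementary, part~(\ref{th:det-4}) follows from it together with the stability criterion of Theorem~\ref{th:determinants}, part~(\ref{th:det-5}) has a short direct proof, and the permanent statement~(\ref{th:per}) is the genuinely hard one. For part~(\ref{th:homo}) the point is that Hurwitz-stability and ordinary stability differ only by the rotation $\bX\mapsto i\bX$. Multiplication by $i$ is a bijection of the right half-plane $\cH_r$ onto the upper half-plane $\cH_u$, since $\Re(c)>0$ is equivalent to $\Im(ic)>0$; thus every $\ba\in\cH_u^{m}$ is uniquely $\ba=i\bc$ with $\bc\in\cH_r^{m}$. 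If $f$ is homogeneous of degree $d$ then $f(i\bc)=i^{d}f(\bc)$, and since $i^{d}\neq 0$ we get $f(\ba)=0 \iff f(\bc)=0$. Letting $\bc$ range over $\cH_r^{m}$ shows that $f$ has a zero in $\cH_u^{m}$ iff it has one in $\cH_r^{m}$, i.e. $f$ is stable iff it is Hurwitz-stable. Only homogeneity is used; the hypothesis $f\in\R[\bX]$ plays no role in this equivalence.

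For part~(\ref{th:det-4}) I would first write $A\mathcal{X}A^{*}=\sum_{i=1}^{m}X_i\,a_i a_i^{*}$, where $a_i$ denotes the $i$-th column of $A$ and each $a_i a_i^{*}$ is a rank-one positive semidefinite Hermitian matrix. Applying Theorem~\ref{th:determinants}(\ref{th:det-1}) with these matrices and $B:=0$ shows that $\det(A\mathcal{X}A^{*})$ is stable. The Cauchy--Binet formula gives $\det(A\mathcal{X}A^{*})=\sum_{|S|=r}|\det A_S|^{2}\prod_{i\in S}X_i$, which is visibly multiaffine and homogeneous of degree $r$; being homogeneous, part~(\ref{th:homo}) promotes its stability to Hurwitz-stability.

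Part~(\ref{th:det-5}) I would prove directly. Suppose $B$ is skew-Hermitian and $\ba\in\cH_r^{m}$ satisfies $\det(\mathrm{diag}(\ba)+B)=0$; choose a nonzero $v$ in the kernel, so $v^{*}\mathrm{diag}(\ba)v=-v^{*}Bv$. The right-hand side is purely imaginary, because $(v^{*}Bv)^{*}=v^{*}B^{*}v=-v^{*}Bv$, whereas the left-hand side equals $\sum_i a_i|v_i|^{2}$ and has strictly positive real part. This contradiction proves $\det(\mathcal{X}+B)$ is Hurwitz-stable. For $\det(\mathbf{1}+B\mathcal{X})$ one notes each $a_i\neq 0$, so $\mathrm{diag}(\ba)$ is invertible and $\det(\mathbf{1}+B\,\mathrm{diag}(\ba))=\det(\mathrm{diag}(\ba))\det(\mathrm{diag}(\ba)^{-1}+B)$; since $\Re(a_i)>0$ implies $\Re(1/a_i)>0$, the case just treated applies.

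The main obstacle is part~(\ref{th:per}), since permanents obey no Cauchy--Binet identity and therefore resist reduction to the determinantal criteria. The route I would take is to consider $\prod_{j=1}^{r}\ell_j$ with $\ell_j=\sum_i A_{ji}X_i$: each linear form has non-negative coefficients and is hence Hurwitz-stable, and products of Hurwitz-stable polynomials are again Hurwitz-stable. Then $Q(\bX)$ is precisely the multiaffine (squarefree) part of this product, because the coefficient of $\prod_{i\in S}X_i$ in $\prod_j\ell_j$ is exactly $\per(A|_S)$. The delicate step is that passing from $\prod_j\ell_j$ to its multiaffine part preserves Hurwitz-stability; this needs a polarization argument of Grace--Walsh--Szeg\H{o} type, which is the content of \cite[Theorem~10.2]{ar:COSW-2004} and which I would cite rather than reprove.
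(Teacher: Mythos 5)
The paper never proves Theorem \ref{th:Hurwitz}: all four items are stated as quotations from the literature, with citations in place of proofs, and the theorem is then used as a black box in the proofs of Theorems \ref{th:main-hu-stable} and \ref{th:sokalization}. So you are being compared against bare citations, and your proposal is strictly more informative than the paper for parts (i)--(iii), where your arguments are correct and self-contained. For (i), the bijection $\mathbf{c}\mapsto i\mathbf{c}$ of $\cH_r$ onto $\cH_u$ together with $f(i\mathbf{c})=i^{d}f(\mathbf{c})$ does the job, and your remark that only homogeneity (not realness) is needed is accurate for the paper's definitions of stable and Hurwitz-stable. For (ii), the decomposition $A\mathcal{X}A^{*}=\sum_{i}X_i\,a_i a_i^{*}$ into positive semidefinite rank-one summands legitimately feeds into Theorem \ref{th:determinants}(\ref{th:det-1}) with $B=0$, and Cauchy--Binet plus part (i) correctly upgrades stability to Hurwitz-stability. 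For (iii), the kernel-vector computation (a quantity with strictly positive real part equated to a purely imaginary one) and the factorization $\det(\mathbf{1}+B\,\mathrm{diag}(\ba))=\det(\mathrm{diag}(\ba))\cdot\det(\mathrm{diag}(\ba)^{-1}+B)$ are both sound.

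The one defect is a circularity in how you close part (iv). The step ``passing from $\prod_j \ell_j$ to its multiaffine part preserves Hurwitz-stability'' is \emph{not} the content of \cite[Theorem 10.2]{ar:COSW-2004}; that theorem is precisely the permanent statement you are trying to prove, so as written you cite the result to establish itself. What you actually need is the separate result of \cite{ar:COSW-2004} (their Section 4 on operations preserving the half-plane property) that the multiaffine part of a polynomial with the half-plane property again has the half-plane property, which is where the Grace--Walsh--Szeg\H{o} coincidence theorem enters. With the citation repointed to that lemma, your outline of (iv) --- product of nonnegative linear forms, identification of the squarefree coefficients with $\per(A\mid_S)$, then the multiaffine-part operator --- is exactly the standard proof. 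Since the paper itself defers all four items to the literature, this repair leaves you resting on no more than what the paper already assumes, while proving three of the four items outright.
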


\subsection{Making graph polynomials stable}
We first consider graph polynomials with a fixed number of indeterminates $m$.
Let $P(G;\bX)$ be a graph polynomial with integer coefficients and with $\SOL$-definition
$$
P(G;\bX) = \sum_{\phi} \prod_{\psi_1} X_1 \cdot \ldots \cdot \prod_{\psi_m} X_m,
$$
with coefficients $(c_{i_1, \ldots , i_m}: i_j \leq d(G), j \in [m])$
$$
P(G;\bX) = \sum_{i_1, \ldots , i_m} c_{i_1, \ldots , i_m} X_1^{i_1}X_2^{i_2} \ldots X_m^{i_m} \in \N[\bX],
$$
such that in each indeterminate the degree of $P(G, \bX)$ is less than $d(G)$.
We put $M(G) = d(G)^m$ which serves as a bound on the number of relevant coefficients,
some of which can be $0$.

\begin{theorem}
\label{th:main-stable}
There is a stable graph polynomial $Q^s(G; Y, \bX)$ with integer coefficients such that
\begin{enumerate}
\item
the coefficients of
$Q^s(G)$ can be computed uniformly\footnote{
There is a polynomial time computable function
$F: \Z[\bX] \rightarrow \Z[Y, \bX]$ such that for all graphs $G$ we have
$F(P(G;\bX)) = Q^s(G;Y,\bX)$.
} 
from  the coefficients of $P(G)$
in time polynomial in the size of the encoding of the coefficients;
\item
there is $a_0 \in \N$ such that
$Q^s(G; a_0, \bX)$ is d.p.-equivalent to $P(G;\bX)$;
\item
$Q^s(G; Y, \bX)$ is $\SOL$-definable and its $\SOL$-definition
can be computed uniformly 
from $\phi, \psi_1, \ldots, \psi_m$
in time polynomial in the size of the formulas $\phi, \psi_1, \ldots, \psi_m$.
\end{enumerate}
\end{theorem}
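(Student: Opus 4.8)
The plan is to reduce the multivariate problem to re-encoding the coefficient data of $P$ into a single non-negative integer and attaching it to a skeleton that is manifestly real-stable in the new variable $Y$. First I would put $P$ into the generating-function normal form of Theorem \ref{th:normal-form}, so that $P(G;\bX)=\sum_{\bi} c_{\bi}(G)\,X_1^{i_1}\cdots X_m^{i_m}$ with non-negative integer coefficients $c_{\bi}(G)$, where $\bi$ ranges over the polynomially many multi-indices below the degree of $P(G)$ (polynomially many since $m$ is fixed and $d(G)$ is polynomially bounded). The essential structural observation is that, for a fixed definition, an $\SOL$-definable graph polynomial has polynomially bounded degree: large sums do not raise the degree and only small products are available. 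Hence the naive idea of writing a stable product of linear forms such as $\prod_{\bi}(Y+\sum_j(i_j+1)X_j)^{c_{\bi}(G)}$ is ruled out, since its $Y$-degree $\sum_{\bi}c_{\bi}(G)$ is exponential. The data must therefore live in the \emph{coefficients} of a bounded-degree polynomial, not in its exponents.

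Concretely, I would fix an injective and efficiently invertible encoding $\mathrm{enc}$ of finite tuples of non-negative integers into $\N$ (for instance a prime-power code $\prod_\ell p_\ell^{c_\ell}$, or a positional code to a recoverable base), put $N(G)=\mathrm{enc}\big((c_{\bi}(G))_{\bi}\big)$, and define $Q^s(G;Y,\bX)=\big(Y-N(G)\big)\cdot\prod_{j=1}^m (X_j+1)$. The factor $Y-N(G)$ has the single real zero $N(G)$, and each $X_j+1$ is nonvanishing whenever $X_j\in\cH_u$; as $Q^s$ vanishes only if one of these factors does, $Q^s$ is stable (this is exactly the univariate real-rooted case of Examples \ref{ex:stable}(i), closed under products). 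Taking $a_0=0$ gives $Q^s(G;0,\bX)=-N(G)\cdot\prod_j(X_j+1)$, from whose coefficients $N(G)$ is read off; since $\mathrm{enc}$ is injective and $N(G)$ is a function of $P(G;\bX)$ alone, this yields a two-way functional dependence between $Q^s(G;0,\bX)$ and $P(G;\bX)$, so by Theorem \ref{th:equiv}(i) they are d.p.-equivalent. The transformation $F$ sending the coefficient list of $P(G)$ to that of $Q^s(G)$ is the encoding followed by the fixed multiplication, computable in time polynomial in the bit-size of the coefficients, giving conditions (1) and (2). The extra carrier $\prod_j(X_j+1)$ is included only so that all indeterminates genuinely occur; it may be dropped, since specializing a stable polynomial at the real point $Y=a_0$ automatically yields a stable $\bX$-polynomial, consistent with $P$ itself being non-stable.

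The main obstacle is condition (3): establishing $N(G)\in\SOLEVAL$ with a definition computable from $\phi,\psi_1,\dots,\psi_m$. Each count $c_{\bi}(G)=\big|\{A:\phi(A),\ \mathbf{f}(A)=\bi\}\big|$ is a large sum over relations with $\bi$ as parameter, and $b(G)=1+\sum_{A:\phi(A)}1$ is an $\SOLEVAL$ integer bounding every $c_{\bi}(G)$; the delicate point is the weights. Using a linear order on $V(G)$ I would realize $\mathrm{rank}(\bi)$ as an $\SOL$-definable quantity and write a base-power as a small product $\prod_{\mathbf{j}<_{\mathrm{lex}}\bi} b(G)$, so that $N(G)=\sum_{\bi}\big(\sum_{A:\phi(A),\,\mathbf{f}(A)=\bi}1\big)\big(\prod_{\mathbf{j}<_{\mathrm{lex}}\bi} b(G)\big)$ is assembled as a small outer sum of products of a large sum and a small product, all admissible in $\SOLEVAL$. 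Two points demand care and constitute the heavy lifting: the enumeration and the base must be read off from $P(G)$ itself rather than from $n(G)$ or $d(G)$, since otherwise two $P$-equivalent graphs of different orders could receive different encodings and destroy the exactness of the d.p.-equivalence; and every auxiliary quantity (rank, degree bound, base powers or primes, and the arithmetic on the potentially exponentially large $N(G)$) must be arithmetized inside $\SOL$ over $(V(G),<)$ and shown independent of the representatives $\psi_i$ chosen among logically equivalent formulas. This $\SOL$-bookkeeping is exactly the phenomenon flagged in the introduction, namely that proving $\SOL$-definability in the multivariate case is considerably more involved than finding the stabilising construction itself.
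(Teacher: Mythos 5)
Your overall strategy---collapse the coefficient vector of $P(G;\bX)$ into a single integer $N(G)$ and attach it to the trivially stable skeleton $(Y-N(G))\cdot\prod_{j}(X_j+1)$---could in principle work, but the proposal has a genuine gap exactly at the point where it must not fail: injectivity of the encoding. The instantiation you actually make $\SOL$-definable is positional encoding in the base $b(G)=1+\sum_{A:\phi(A)}1=1+P(G;1,\dots,1)$, i.e.\ one plus the digit sum, and this map from polynomials to integers is \emph{not} injective, because the base cannot be recovered from the encoded value. Concretely, take $P$ to be the independence polynomial $I(G;X)=\bP^{ind}_{\cC}(G;X)$ with $\cC$ the edgeless graphs. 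For $G_1=E_2$ we get $I(E_2;X)=1+2X+X^2$, digit sum $4$, base $5$, so $N(E_2)=1+2\cdot 5+25=36$; for $G_2=K_5$ we get $I(K_5;X)=1+5X$, digit sum $6$, base $7$, so $N(K_5)=1+5\cdot 7=36$. Thus $N(E_2)=N(K_5)$ although $I(E_2;X)\neq I(K_5;X)$, and since your $Q^s(G;a_0,\bX)=(a_0-N(G))\prod_j(X_j+1)$ is a function of $N(G)$ alone, condition (ii) fails for \emph{every} choice of $a_0\in\N$: the claimed ``two-way functional dependence'' needed for Theorem~\ref{th:equiv} simply does not hold. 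Your fallback, the prime-power code $\prod_\ell p_\ell^{c_\ell}$, is injective but violates the other two clauses: its output has bit-size exponential in the bit-size of the coefficients (take $c_\ell=\binom{n}{\ell}$, as for $I(E_n;X)=(1+X)^n$), so condition (i) fails; and it is not expressible in $\SOLEVAL$, since writing $p_\ell^{c_\ell}$ for an exponentially large count $c_\ell$ would require a large \emph{product} over relations, a closure operation $\SOLEVAL$ does not provide (it has only large sums and small products). So each of your two candidate encodings breaks a different clause of the theorem. You correctly flagged that the base must be a function of $P(G)$ rather than of $n(G)$ or $d(G)$ (that secures the direction $Q^s\leq_{d.p.}P$), but you overlooked the converse direction, which is where your construction collapses.

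The gap is repairable inside your own scheme: pair the base with the value by a polynomial injection, e.g.\ replace $N(G)$ by $N'(G)=(b(G)+N(G))^2+N(G)$, which lies in $\SOLEVAL$ (closure under sum and product), is polynomial-time computable, is a function of $P(G;\bX)$, and from which $b(G)$, and hence all digits $c_{\bi}(G)$, are uniquely recoverable. For comparison, the paper never collapses the data to one number: it applies the Borcea--Br\"and\'en determinant criterion (Theorem~\ref{th:determinants}(i)) to form $Q^s(G;Y,\bX)=\prod_{i=1}^{M}\bigl(d_iY^i+\sum_{j=1}^{m}X_j\bigr)$, where $d_1,\dots,d_M$ enumerate the coefficients of $P$ in lexicographic order; stability holds for every real substitution for $Y$, the polynomial $\sum_i d_iY^i$ is read off as the coefficient of $(\sum_j X_j)^{M-1}$, the $d_i$ are recovered as digits in base $a_0$ for $a_0$ large, and $\SOL$-definability comes from the same coefficient-definability machinery (Lemma~\ref{le:sol-2}) that you invoke. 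Note that the base-recoverability issue you ran into is latent in the paper's decoding step as well, since there $a_0$ is chosen larger than $1+2\max_i|d_i(G)|$, a graph-dependent quantity; your construction merely makes the resulting non-injectivity fully visible, which is a useful observation, but as written your proof of clause (ii) is false and clauses (i) and (iii) hold only for the encoding that breaks (ii).
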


\begin{theorem}
\label{th:main-hu-stable}
If additionally, $P(G;\bX)$ has only non-negative coefficients,
there is a Hurwitz-stable graph polynomial $Q^h(G; Y, \bX)$ with non-negative integer coefficients  and one more
indeterminate $Y$ such that
\begin{enumerate}
\item
the coefficients of $Q^h(G)$ can be computed uniformly in polynomial time from  the coefficients of $P(G)$;
\item
there is $\ba \in \N^{M(G)-n}$ such that
$Q^h(G; \ba, \bX)$ is d.p.-equivalent to $P(G;\bX)$;
\item
$Q^h(G; Y, \bX)$ is $\SOL$-definable and its $\SOL$-definition
can be computed uniformly in polynomial time from $\phi, \psi_1, \ldots, \psi_m$.
\end{enumerate}
\end{theorem}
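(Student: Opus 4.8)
The plan is to mirror the construction behind Theorem \ref{th:main-stable}, exploiting the non-negativity of the coefficients to upgrade stability to Hurwitz-stability. The engine is Theorem \ref{th:Hurwitz}(\ref{th:homo}): a real homogeneous polynomial is Hurwitz-stable precisely when it is stable. So the target is to produce, from the coefficient tuple $(c_{i_1,\ldots,i_m}(G))$ of $P(G;\bX)$, a \emph{homogeneous stable} polynomial whose coefficient tuple is mutually computable with that of $P$.

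First I would introduce one fresh indeterminate $Y_J$ for each relevant multi-index $J=(i_1,\ldots,i_m)$ with $i_k\le d(G)$, so there are at most $M(G)=d(G)^m$ of them, and take the tuple $\ba=(c_J(G))_J\in\N^{M(G)-n}$ of the statement (non-negative by hypothesis) as the intended substitution. The idea is to assemble a single polynomial $U(G;\bY,\bX)$ that is homogeneous, multiaffine in the $Y_J$, and stable, built so that the specialization $Y_J=c_J(G)$ carries each coefficient of $P$ onto a distinct, recoverable monomial. The natural building blocks are the determinant and permanent forms of Theorems \ref{th:determinants} and \ref{th:Hurwitz}: using Theorem \ref{th:determinants}(\ref{th:det-1}) with positive semidefinite coefficient matrices one gets a homogeneous stable determinant $\det(\sum_i Z_i A_i)$, which by Theorem \ref{th:Hurwitz}(\ref{th:homo}) is then Hurwitz-stable; alternatively the permanent criterion Theorem \ref{th:Hurwitz}(\ref{th:per}) directly yields Hurwitz-stable, multiaffine, homogeneous polynomials from non-negative matrices, whose coefficients are permanents of submatrices.

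Once $Q^h(G;\bY,\bX)=U(G;\bY,\bX)$ is in hand, part (2) would follow from Theorem \ref{th:equiv}: the construction is arranged so that the coefficient tuple of $Q^h(G;\ba,\bX)$ and that of $P(G;\bX)$ determine each other by a fixed, polynomial-time computable map, giving $\leq_{d.p.}$ in both directions. The uniform $\SOL$-definability demanded in (3) would follow from the $\SOL$-definability of determinant and permanent polynomials (the even/odd permutation split of Example \ref{ex:prominent}(\ref{ex:spectral})) together with the closure of $\SOLEVAL$ under sums, products and substitutions (Proposition \ref{pr:closure}); the defining formulas for $Q^h$ are read off mechanically from $\phi,\psi_1,\ldots,\psi_m$, and the coefficient bookkeeping of part (1) is polynomial in the formula size.

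The main obstacle is the middle step: realizing the \emph{prescribed} non-negative coefficients $c_J(G)$ inside a form that is simultaneously homogeneous and stable (equivalently Hurwitz-stable), since a naive homogenization of a stable polynomial need not stay stable — for example $X_1^2+1$ is not stable and neither is its homogenization $X_1^2+Y^2$. I would therefore avoid homogenizing after the fact and instead build homogeneity into the determinantal/permanental gadget from the start, so that Theorem \ref{th:Hurwitz}(\ref{th:homo}) applies directly. A secondary but genuinely delicate point is boundary behaviour: since $\ba$ ranges over $\N$ and $0\notin\cH_r$, specializing an indeterminate to $0$ need not preserve Hurwitz-stability on the nose, so the substitutions $Y_J=c_J(G)$ with some $c_J=0$ must be justified by a limiting (closure) argument, using that Hurwitz-stability is preserved under limits of substitutions taken from $\cH_r$.
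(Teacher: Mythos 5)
Your high-level machinery is the same as the paper's: Theorem \ref{th:determinants}(\ref{th:det-1}) for stability of determinantal forms with positive semi-definite coefficient matrices, Theorem \ref{th:Hurwitz}(\ref{th:homo}) to upgrade stability to Hurwitz-stability via homogeneity, Theorem \ref{th:equiv} for part (ii), and the even/odd determinant trick plus Proposition \ref{pr:closure} for part (iii). But the point where the coefficients of $P$ enter the construction is where you part ways with the paper, and it is exactly there that your proposal is incomplete. The paper never substitutes the coefficients into half-plane variables: it keeps a \emph{single} new indeterminate $Y$, forms the diagonal $(M\times M)$-matrix $D$ whose diagonal entries are the coefficients of $P(G;\bX)$ (positive semi-definite precisely because these are non-negative), reuses the identity matrices $A_i$ from the proof of Theorem \ref{th:main-stable}, and sets
$$
Q^h(G;Y,\bX)=\det\Bigl(D\cdot Y+\sum_{i\in[m]}A_i\cdot X_i\Bigr),
$$
which is homogeneous by construction, stable by Theorem \ref{th:determinants}(\ref{th:det-1}), hence Hurwitz-stable by Theorem \ref{th:Hurwitz}(\ref{th:homo}). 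In particular the boundary problem you flag (substituting $c_J=0\notin\cH_r$) never arises --- your limiting argument is correct, but it is an artifact of your own design --- and $Q^h$ has $m+1$ indeterminates as the statement requires, whereas your $U$ has $m+M(G)$ indeterminates, a number that depends on $G$.

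The genuine gap is that the polynomial $U(G;\bY,\bX)$ carrying your whole argument is never constructed, and the property you assign to it --- that the specialization $Y_J=c_J(G)$ ``carries each coefficient of $P$ onto a distinct, recoverable monomial'' --- is precisely what the gadgets you invoke fail to deliver. With positive semi-definite (in particular diagonal) coefficient matrices, $\det(\sum_J Y_J B_J+\sum_\ell X_\ell A_\ell)$ collapses to a product of linear forms $\prod_J\bigl(Y_J+\sum_\ell X_\ell\bigr)$, and after substituting $Y_J=c_J(G)$ this becomes $\prod_J\bigl(c_J(G)+\sum_\ell X_\ell\bigr)$, which is \emph{symmetric} in the $c_J$'s: it determines only their multiset (equivalently, the elementary symmetric functions), not the indexed tuple. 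Concretely, for the $\SOL$-definable polynomial $P(G;X)=m(G)\cdot X+k(G)\cdot X^2$, which has non-negative coefficients, the graphs $K_2\sqcup K_1$ and the path $P_3$ have coefficient tuples $(1,2)$ and $(2,1)$; your specialized polynomials then coincide while $P$ distinguishes the two graphs, so the direction $P\leq_{d.p.}Q^h(\cdot;\ba,\cdot)$ of (ii) fails. The permanent route fares no better: realizing prescribed coefficients as $\per(A\mid_S)$ for a non-negative matrix $A$ is an inverse problem you do not address. Breaking this symmetry while preserving homogeneity, stability, multiaffinity and polynomial size is the actual mathematical content of the step, and it is missing from your proposal. (The recovery step deserves care in any write-up: the paper's own $Q^h=\prod_i\bigl(d_iY+\sum_\ell X_\ell\bigr)$ likewise exposes only symmetric functions of the $d_i$, and its appeal to ``$a\in\N$ large enough'' is terse; but the paper at least commits to a concrete $Q^h$ whose coefficients are computable from those of $P$, so its parts (i) and (iii) stand, whereas in your scheme all the weight rests on the unconstructed $U$.)
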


In \cite{ar:COSW-2004,ar:Sokal2005a} the authors also consider graph polynomials
where the number of indeterminates depends on the graph $G=(V(G),E(G))$, as in
Example \ref{ex:stable}(\ref{ex:sokalization}).
We will not give the most general definition here, but restrict ourselves to
the case the indeterminates $X_e$ are labeled by the edges $E(G)$ of $G$.
We put $m(G)$ to be the cardinality of $E(G)$.

Let $S(G;\bX)$ be a multiaffine graph polynomial with non-negative integer coefficients and with $\SOL$-definition
$$
S(G;\bX) = \sum_{\phi(A)} \prod_{\psi_1(A,e)} X_e \cdot \ldots \cdot \prod_{\psi_m(A,e)} X_e,
$$
and coefficients 
$(c_{i_1, \ldots , i_m}: i_j \in \{0,1\}, j \in [m(G)])$
$$
S(G;\bX) = \sum_{i_1, \ldots , i_m} c_{i_1, \ldots , i_m} X_1^{i_1}X_2^{i_2} \ldots X_m^{i_{m(G)}} \in \N[\bX],
$$
such that in each indeterminate the degree of $S(G, \bX)$ is less than $d(G)$.
We put $M(G) = 2^{m(G)}$ which serves as a bound on the number of relevant coefficients,
some of which can be $0$.
Let $\bX_G =(X_e: e \in E(G))$.

\begin{theorem}
\label{th:sokalization}
There are graph polynomials 
$T^s(G; \bX_G)$  and
$T^h(G; \bX_G)$ 
with non-negative integer coefficients such that
\begin{enumerate}
\item
$T^s(G; \bX_G)$ is stable and
$T^h(G; \bX_G)$ is Hurwitz-stable; 
\item
Both the coefficients of  $T^s(G; \bX_G)$  and of  $T^h(G; \bX_G)$ 
can be computed uniformly in polynomial time from the coefficients of $S(G; \bX_G)$;
\item
Both
$T^s(G; \bX_G)$  and $T^h(G; \bX_G)$ 
are d.p.-equivalent to $S(G;\bX_G)$;
\item
Both
$T^s(G; \bX_G)$  and $T^h(G; \bX_G)$ 
are $\SOL$-definable and its $\SOL$-definition
can be computed uniformly in polynomial time from $\phi, \psi_1, \ldots, \psi_m$.
\end{enumerate}
\end{theorem}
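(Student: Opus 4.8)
The plan is to treat $T^s$ and $T^h$ in parallel with the constructions behind Theorems~\ref{th:main-stable} and~\ref{th:main-hu-stable}, the only genuinely new feature being that the indeterminates are now indexed by $E(G)$, so the matrices we build grow with $G$ and $\SOL$-definability must be argued uniformly. First I would fix the $\SOL$-definition of $S$ and read off its coefficients $c_A(G)$ for $A \subseteq E(G)$, which are non-negative integers and, being derived from the formulas $\phi, \psi_1, \ldots, \psi_m$, are themselves uniformly $\SOL$-definable numeric graph parameters computable in time polynomial in those formulas.

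For the stable polynomial $T^s$ I would use the determinant criteria of Theorem~\ref{th:determinants}: writing $\mathcal{X}_G = \mathrm{diag}(X_e : e \in E(G))$, the polynomial $\det(\mathcal{X}_G + M_G)$ is real stable whenever $M_G$ is Hermitian (part~(\ref{th:det-3})), its coefficients being the principal minors of $M_G$; more flexibly, $\det(\sum_e X_e A_e + B)$ with the $A_e$ positive semi-definite and $B$ Hermitian is stable (part~(\ref{th:det-1})). Taking these matrices positive semi-definite simultaneously guarantees the non-negativity of the coefficients demanded by the statement, since the principal minors of a positive semi-definite matrix are non-negative. For the Hurwitz-stable polynomial $T^h$ I would instead exploit that $S$ has non-negative coefficients and use the permanent criterion, Theorem~\ref{th:Hurwitz}(\ref{th:per}): for any non-negative real matrix $A$, $\per(A\mathcal{X})$ is Hurwitz-stable, multiaffine and homogeneous. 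The homogeneity of $\per(A\mathcal{X})$ is not an obstruction, because by Theorem~\ref{th:Hurwitz}(\ref{th:homo}) a real homogeneous polynomial is Hurwitz-stable exactly when it is stable, so I may freely homogenize the data. In each case the matrix is assembled from $G$ so that its entries are $\SOL$-definable from the $c_A(G)$ and its relevant (permanental or principal) minors place the tuple $(c_A(G))_A$ into recoverable coefficient positions.

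$\SOL$-definability of $T^s$ and $T^h$ then follows exactly as for $P_A$ and $P_L$ in Example~\ref{ex:prominent}(\ref{ex:spectral}): a determinant or permanent is split into its even- and odd-permutation parts, each a $\SOL$-definable sum of products over the symmetric group, and recombined by the substitution $X \mapsto -X$ where needed. Since the matrix entries are $\SOL$-definable and the construction is uniform in $G$, the Normal Form Theorem~\ref{th:normal-form} applies and both the $\SOL$-definitions and the coefficient maps are computable in time polynomial in $\phi, \psi_1, \ldots, \psi_m$. The d.p.-equivalence with $S$ is established through Theorem~\ref{th:equiv}(i): I would exhibit two uniform functions on $\Z[\bX_G]$ passing between the coefficient tuple of $S(G;\bX_G)$ and that of $T^s(G;\bX_G)$ (respectively $T^h(G;\bX_G)$), so that two graphs receive equal values under $S$ if and only if they do under $T^s$ (respectively $T^h$).

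The hard part will be the encoding step. Stable and Hurwitz-stable polynomials are rigid: their coefficient vectors are far from arbitrary (principal minors satisfy algebraic relations, and permanental coefficients are strongly correlated), so one cannot simply set the coefficients of $T^{s}$ or $T^h$ equal to those of $S$. The crux is therefore to design $M_G$ (respectively $A_G$) so that the free data $(c_A(G))_A$ injects into the coefficients of a \emph{provably} stable determinant or permanent, while the remaining ``filler'' coefficients are fixed functions of $G$ alone; this makes the induced coefficient map bijective on fibres and hence yields d.p.-equivalence. Reconciling this rigidity with uniform, polynomial-time $\SOL$-definability in the edge-indexed setting, where the matrix size scales with $m(G)$, is the main obstacle.
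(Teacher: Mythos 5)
Your plan correctly identifies the ingredients the paper also uses (Theorem \ref{th:determinants}(\ref{th:det-1}), Theorem \ref{th:Hurwitz}(\ref{th:homo}), Theorem \ref{th:equiv}), and you are right that the proof should run parallel to Theorems \ref{th:main-stable} and \ref{th:main-hu-stable}; indeed the paper's proof of Theorem \ref{th:sokalization} is literally one line, ``same as Theorem \ref{th:main-hu-stable} with $m(G)$ indeterminates''. But what you call ``the hard part'' and leave as ``the main obstacle'' is not a detail to be deferred --- it \emph{is} the proof, and it has a concrete, surprisingly simple resolution that your proposal never writes down. Enumerate the $M(G)=2^{m(G)}$ coefficients of $S$ as $c_1,\ldots,c_{M(G)}$, let $D(Y)$ be the diagonal $(M(G)\times M(G))$-matrix with entries $c_i\,w_i(Y)$, and take \emph{every} matrix $A_e$, $e\in E(G)$, to be the identity. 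Then
\begin{equation*}
T(G;Y,\bX_G)\;=\;\det\Bigl(D(Y)+\sum_{e\in E(G)}A_e X_e\Bigr)
\;=\;\prod_{i=1}^{M(G)}\Bigl(c_i\,w_i(Y)+\sum_{e\in E(G)}X_e\Bigr),
\end{equation*}
where the weights $w_i(Y)$ are distinct powers $Y^i$ (as in the proof of Theorem \ref{th:main-stable}), resp.\ degree-one weights for the homogeneous variant to which Theorem \ref{th:Hurwitz}(\ref{th:homo}) applies, and $Y$ is finally evaluated at a large $a_0\in\N$. Because the matrix inside the determinant is diagonal, the ``rigidity'' of minors that worries you never arises: the determinant factors into independent linear forms, stability and Hurwitz-stability are immediate (each factor has positive imaginary, resp.\ real, part on $\cH_u^{m(G)}$, resp.\ $\cH_r^{m(G)}$, since $c_i\geq 0$), the coefficients are visibly non-negative integers, and d.p.-equivalence follows from Theorem \ref{th:equiv} because $\sum_i c_i a_0^{\,i}$ is readable off the coefficient of $(\sum_e X_e)^{M(G)-1}$, whence each $c_i$ is recovered as a base-$a_0$ digit. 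Your two candidate encodings --- principal minors of a Hermitian matrix, permanental minors of a non-negative matrix --- are exactly the overdetermined systems you correctly suspect cannot carry arbitrary coefficient data, so without the diagonal collapse your proposal restates the theorem rather than proving it.

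The second gap is your route to $\SOL$-definability. The even/odd permutation splitting of Example \ref{ex:prominent}(\ref{ex:spectral}) works for $P_A$ and $P_L$ because those determinants are of $n(G)\times n(G)$ matrices indexed by the vertices, so a permutation is a binary relation on $V(G)$ and the sum over permutations is a legitimate large $\SOL$ sum. Here the matrix has size $2^{m(G)}$: its index set is not a power of $V(G)$, and a permutation of it cannot be encoded by relation variables of fixed arity over the graph, so that argument is unavailable in principle, and the Normal Form Theorem \ref{th:normal-form} cannot rescue an expression that is not in $\SOLEVAL$ to begin with. The paper avoids this precisely because the determinant has collapsed into a product of linear forms indexed by the same objects that index the coefficients (tuples in Theorem \ref{th:main-stable}, subsets $A\subseteq E(G)$ here): definability then reduces, via Lemmas \ref{le:sol-2} and \ref{le:sol-3}, to the $\SOL$-definability of the coefficients $c_A$ themselves, which comes directly from the formulas $\phi,\psi_1,\ldots,\psi_m$ defining $S$.
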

\subsection{Proofs}
The proofs have two components: One uses first some {\em dirty trick}
to modify the polynomial such that it behaves as required, but then one has to show that
this {\em dirty trick} can be performed in way which preserves definability in $\SOLEVAL$.

\subsubsection{Proof of Theorem \ref{th:main-stable}}

We use Theorem \ref{th:determinants}(\ref{th:det-1}). 
Let $\alpha: \N^m \rightarrow \N$  which maps $(i_1, \ldots i_m) \in \N^m$
into its position in the lexicographic order of $\N^m$. 
We relabel the coefficients of $P(G;\bX)$
such that $d_i = c_{i_1, \ldots , i_m}$ with $\alpha(i_1, \ldots , i_m) =i, i \in [M]$ and $M=d(G)^m$.

We put $B$ to be the $(M \times M)$ diagonal matrix with $B_{i,i} = d_i \cdot Y_i$ and $A_1= A_2= \ldots = A_m$
to be the $(M \times M)$ identity matrix. 
The identity matrix is both Hermitian and positive semi-definite. Furthermore, $B \mid_{Y=a}=B(a)$ being a diagonal matrix,
is Hermitian for every $a \in \C$. 
Hence, 
$$
Q_a^s(G;a, \bX) = \det(B(a) + \sum_{i=1}^M X_i \cdot A_i) = \prod_{i=1}^M (d_i + \sum_{i=1}^M X_i)
$$
is stable for every $a \in \C$.

We have to verify (i)-(iii).

(i):
All the matrices can be computed in
polynomial time in  $\Z[Y, \bX]$.

(ii):
We use Theorem \ref{th:equiv}:
$Q^s \preceq_{d.p} P$ follows from (i).
We have to show that there is $a_0\in\N$ with $P \preceq_{d.p} Q_{a_0}^s$.
The function $\alpha$ can be easily inverted.
To recover  the coefficients of $P(G)$ from the coefficients of $Q^s(G)$, we note that 
$$
\sum_{i=0}^M d_i(G) \cdot Y^i
$$ 
is the coefficient of 
$(\sum_{\ell=1}^{m} X_{\ell})^{M-1}$ of $Q^s(G; Y, \bX)$.
This can be computed in polynomial time from  the coefficients of $Q^s$.
To find $a_0$ we let $a_0 \in \N$ be bigger than $1+2 \cdot |d_i(G)|$, as $d_i(G)$ could be negative.
Now $\sum_{i=0}^M d_i(G) \cdot a_0^i$ can be viewed as a natural number written in base  $a_0$,
and the digits  $d_i(G)$ can be uniquely determined.

(iii): To prove that  $Q^s(G;Y, \bX)$ is $\SOL$-definable we need a few lemmas from 
\cite{ar:FischerKotekMakowsky11,ar:KotekMakowskyZilber11,phd:Kotek}.

The first lemma is part of the definition of $\SOL$-definability.
\begin{lemma}
\label{le:sol-1}
Finite sums and products of $\SOL$-definable polynomials are $\SOL$-definable.
\end{lemma}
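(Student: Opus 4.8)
The plan is to read the statement off directly from clause (ii) of the inductive definition of $\SOLEVAL(G)$, which already declares $\SOLEVAL(G)$ to be closed under addition, subtraction and multiplication in $\cR[\bX]$. So there is essentially nothing to prove beyond unwinding the definition of $\SOL$-definability and disposing of the word ``finite'' by an iteration.

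First I would fix finitely many $\SOL$-definable graph polynomials $\bP_1(G;\bX), \ldots, \bP_k(G;\bX)$, where $k$ is a constant not depending on $G$. By the definition of $\SOL$-definability, for each $j \leq k$ there is an interpreted term $t_j \in \SOLEVAL(G)$ with $t_j(G) = \bP_j(G;\bX)$ for every graph $G$. I would then argue by induction on $k$. The base case $k=1$ is trivial. For the inductive step, clause (ii) applied once more to the term supplied by the induction hypothesis and to $t_k$ shows that $t_1 + \cdots + t_{k-1} + t_k$ and $t_1 \cdots t_{k-1} \cdot t_k$ again lie in $\SOLEVAL(G)$. Since evaluation $t \mapsto t(G)$ maps $\SOLEVAL(G)$ into $\cR[\bX]$ and respects the ring operations (clause (vi)), these terms evaluate to $\sum_{j} \bP_j(G;\bX)$ and $\prod_{j} \bP_j(G;\bX)$ respectively, which is exactly what $\SOL$-definability of the sum and of the product requires.

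The only point that deserves emphasis — and the ``main obstacle'' here is a caveat rather than a genuine difficulty — is that $k$ must be a fixed constant independent of $G$. Closure under the binary operations of clause (ii), iterated a constant number of times, yields closure under any sum or product of fixed finite length; it does \emph{not} by itself yield closure under a sum or product whose number of summands or factors grows with $G$. For the latter one must instead invoke the small-sums-and-products clause (iv) (or the large-sums clause (v)), in which the index set is specified by an $\SOL$-formula rather than being written out term by term. As long as ``finite'' is understood as ``of fixed finite length,'' the lemma is immediate from the definition, which is precisely why the surrounding text remarks that it is already part of the definition of $\SOL$-definability.
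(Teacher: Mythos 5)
Your proposal is correct and takes essentially the same route as the paper: the paper gives no separate proof at all, merely remarking that this lemma ``is part of the definition of $\SOL$-definability,'' i.e., it is immediate from the closure clause of $\SOLEVAL(G)$ under addition and multiplication, which is exactly what you unwind (with a trivial induction on the fixed number $k$ of terms). Your caveat that $k$ must be a constant independent of $G$ --- with graph-dependent index sets requiring the small/large sum clauses instead --- is a sensible clarification consistent with the paper's framework, not a deviation from it.
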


\begin{lemma}
\label{le:sol-2}
Let $G_< = (V(G), E(G), <(G))$ be a graph with an ordering $<(G)$ on the vertices.
Let $Q(G;\bX)$ be a graph polynomial with non-negative integer coefficients and with $\SOL$-definition
$$
Q(G;\bX) = \sum_{A \subseteq V^r:\phi(A)} 
\prod_{\mathbf{v}_1 \in A:\psi_1(A, \mathbf{v}_1)} X_1 \cdot \ldots \cdot 
\prod_{\mathbf{v}_m \in A:\psi_1(A, \mathbf{v}_m)} X_m 
$$
with coefficients  $(c_{i_1, \ldots , i_m}: i_j \leq d(G), j \in [m])$
$$
Q(G;\bX) = \sum_{i_1, \ldots , i_m} c_{i_1, \ldots , i_m} X_1^{i_1}X_2^{i_2} \ldots X_m^{i_m} \in \N[\bX],
$$
such that in each indeterminate the degree of $P(G, \bX)$ is less than $d(G)$.

Let $s(G)$ be such that $|V(G)|^{s(G)} \geq d(G)$ and extend the ordering $<(G)$ to the lexicographic
ordering of $|V(G)|^{s(G)}$. For $\mathbf{v} \in V(G)^{s(G)}$ we define $Init(G;\mathbf{v})$ to be the set
of predecessors of $\mathbf{v}$ in this lexicographic ordering.

The coefficients $c_{i_1, \ldots , i_m}$ of $Q(G;\bX)$ are $\SOL$-definable by
$$
c(\mathbf{v}_1, \ldots , \mathbf{v}_m) = \sum_{A \subseteq V^r} 1,
$$
where $A$ ranges over all subsets satisfying $\phi(A)$ and for each $\ell \in [m]$
the set $Init(G;\mathbf{v}_{\ell})$ is of the same size as $i_{\ell}$ and as
$$
\{ 
\mathbf{w}_{\ell} \in V^r: 
(V(G),E(G),<(G),A, \mathbf{w}_{\ell}) 
\models 
\phi(A) \wedge \psi(A, \mathbf{w}_{\ell}) 
\}. 
$$
\end{lemma}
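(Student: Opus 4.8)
The plan is to read off $c_{i_1,\dots,i_m}$ as a count of relations and then to express that count inside $\SOLEVAL$. First I would exploit that $Q$ is given in normal form: fixing a relation $A \subseteq V^r$ with $\phi(A)$, the term it contributes is the single monomial $\prod_{\ell=1}^m X_\ell^{n_\ell(A)}$, where
$$
n_\ell(A) = \bigl| \{ \mathbf{w}_\ell \in V^r : \phi(A) \wedge \psi_\ell(A, \mathbf{w}_\ell) \} \bigr|
$$
counts the tuples activating the $\ell$-th product. Since there is no subtraction and the coefficients are non-negative, there is no cancellation, so
$$
c_{i_1,\dots,i_m} = \bigl| \{ A \subseteq V^r : \phi(A) \text{ and } n_\ell(A) = i_\ell \text{ for all } \ell \in [m] \} \bigr|.
$$
Thus the task reduces to writing the predicate ``$n_\ell(A) = i_\ell$'' as an $\SOL$-formula and then summing $1$ over the qualifying $A$.

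Next I would set up the encoding of indices by tuples. Because each $n_\ell(A)$ counts $r$-tuples, it is bounded by $|V(G)|^{r}$; hence the degree bound $d(G)$ may be taken to satisfy $d(G) \le |V(G)|^{r}+1$, and the encoding length $s$ may be chosen a \emph{constant} independent of $G$ (this is the role of requiring $|V(G)|^{s(G)} \ge d(G)$). The lexicographic order on $V(G)^s$ induced by $<(G)$, and with it the initial segment $Init(G;\mathbf{v}_\ell)$ of predecessors of a tuple $\mathbf{v}_\ell \in V^s$, are first-order definable from $<(G)$; as $\mathbf{v}_\ell$ ranges over $V^s$ the cardinality $|Init(G;\mathbf{v}_\ell)|$ runs through $0,1,\dots,|V(G)|^s-1$, giving a bijective correspondence between tuples and the admissible index values $i_\ell$.

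The hard part will be expressing the equicardinality ``$n_\ell(A) = |Init(G;\mathbf{v}_\ell)|$'' in logic: the two sets live in different Cartesian powers of $V$ (namely $V^r$ versus $V^s$), so no first-order or monadic device suffices and I must use genuine relation quantification. I would write a formula $\theta_\ell(A,\mathbf{v}_\ell)$ that existentially quantifies a relation $B_\ell \subseteq V^{r+s}$ and asserts that $B_\ell$ is the graph of a bijection from $\{\mathbf{w}_\ell : \phi(A) \wedge \psi_\ell(A,\mathbf{w}_\ell)\}$ onto $Init(G;\mathbf{v}_\ell)$. Totality, single-valuedness, injectivity and surjectivity of $B_\ell$ are all first-order over the tuple variables, so $\theta_\ell$ is an $\SOL$-formula, and its quantified relation has the \emph{fixed} arity $r+s$ precisely because $s$ was chosen constant --- this is what keeps the construction inside $\SOL$.

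Finally I would assemble the pieces. Put $\theta(A,\mathbf{v}_1,\dots,\mathbf{v}_m) \equiv \phi(A) \wedge \bigwedge_{\ell=1}^m \theta_\ell(A,\mathbf{v}_\ell)$, and define
$$
c(\mathbf{v}_1,\dots,\mathbf{v}_m) = \sum_{A \subseteq V^r : \theta(A,\mathbf{v}_1,\dots,\mathbf{v}_m)} 1 .
$$
By the large-sum clause in the definition of $\SOLEVAL$, summing the constant $1$ over the relations $A$ satisfying the $\SOL$-formula $\theta$ (with the $\mathbf{v}_\ell$ as free individual parameters) yields an $\SOL$-definable object, and evaluating it at the tuples $\mathbf{v}_\ell$ that encode the chosen indices $i_\ell$ returns exactly $c_{i_1,\dots,i_m}$ by the displayed count. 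Since replacing $\phi,\psi_\ell$ by logically equivalent formulas alters neither the sets counted nor the value of the sum, the definition is robust, which is precisely the $\SOL$-definability asserted.
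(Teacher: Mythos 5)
Your proposal is correct and takes essentially the same route as the paper: the paper's entire proof is the one-line observation that the equicardinality requirement is expressible in $\SOL$, which is exactly what your existentially quantified bijection relation $B_\ell \subseteq V^{r+s}$ (with first-order totality, injectivity and surjectivity conditions) spells out in detail. Your additional remarks on the absence of cancellation and on choosing $s$ constant so that the quantified relation has fixed arity are the right supporting observations, and they match what the lemma's statement and surrounding text already presuppose.
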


\begin{proof}
We only have to note that the equicardinality requirement is expressible in $\SOL$.
\end{proof}

\begin{lemma}
\label{le:sol-3}
The polynomial
\begin{gather}
Q_a^s(G;a, \bX) = 
\prod_{i=1}^M (d_i + \sum_{i=1}^M X_i) =
\prod_{\mathbf{v}_1, \ldots , \mathbf{v}_m} \left(c(\mathbf{v}_1, \ldots , \mathbf{v}_m) +\sum_{i=1}^M X_i \right)
\notag
\end{gather}
is $\SOL$-definable.
\end{lemma}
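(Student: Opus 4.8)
The plan is to read the displayed right-hand side of $Q^s_a(G;a,\bX)$ directly off the inductive closure rules defining $\SOLEVAL(G)$, so that no new analytic input is needed beyond the combinatorial bookkeeping already supplied by Lemma \ref{le:sol-2}. The expression to be realized is a product, ranging over index tuples $(\mathbf{v}_1,\ldots,\mathbf{v}_m)$, of factors of the form $c(\mathbf{v}_1,\ldots,\mathbf{v}_m) + \sum_i X_i$. Thus there are exactly two ingredients to place in $\SOLEVAL(G)$: the integer-valued coefficient $c(\mathbf{v}_1,\ldots,\mathbf{v}_m)$, viewed as a term with the tuples as free individual parameters, and the fixed polynomial $\sum_i X_i$, an element of the ambient ring $\cR[\bX]$. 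Once both are in $\SOLEVAL(G)$, the factor and then the whole product follow by closure.

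First I would invoke Lemma \ref{le:sol-2}, which exhibits $c(\mathbf{v}_1,\ldots,\mathbf{v}_m)$ as a large sum $\sum_{A\subseteq V^r} 1$ over the subsets $A$ satisfying $\phi(A)$ together with the equicardinality conditions that link $A$ to the indices encoded by the $\mathbf{v}_\ell$ through $Init(G;\cdot)$. By the large-sum closure rule this term lies in $\SOLEVAL(G)$, with $\mathbf{v}_1,\ldots,\mathbf{v}_m$ as non-displayed individual parameters. The polynomial $\sum_i X_i$ is a base term of $\SOLEVAL(G)$ (obtained from the indeterminates by closure under addition). Hence, again by closure under addition, the factor $c(\mathbf{v}_1,\ldots,\mathbf{v}_m) + \sum_i X_i$ is a term of $\SOLEVAL(G)$ with free variables $\mathbf{v}_1,\ldots,\mathbf{v}_m$.

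It remains to take the product over the index tuples. Fixing a constant $s=s(G)$ with $|V(G)|^{s}\ge d(G)$, each $\mathbf{v}_\ell$ ranges over $V^{s}$ and the combined product variable ranges over $V^{m\cdot s}$, an arity independent of $G$. I would restrict the product by an $\SOL$-formula $\chi(\mathbf{v}_1,\ldots,\mathbf{v}_m)$ expressing that each $\mathbf{v}_\ell$ encodes an index $<d(G)$ (equivalently $|Init(G;\mathbf{v}_\ell)|<d(G)$), which is $\SOL$-definable since $d(G)$ is. The resulting product $\prod_{(\mathbf{v}_1,\ldots,\mathbf{v}_m):\chi}\bigl(c(\mathbf{v}_1,\ldots,\mathbf{v}_m)+\sum_i X_i\bigr)$ is then a \emph{small} product: it is indexed by a fixed-arity tuple of vertices, so it has at most $|V(G)|^{m\cdot s}$ factors, and $\chi$ selects exactly the $M=d(G)^m$ tuples indexing genuine coefficients. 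By the small-product closure rule it is a term of $\SOLEVAL(G)$, and by construction it equals $Q^s_a(G;a,\bX)$, which is the claimed $\SOL$-definition.

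The step I expect to carry the weight is the bookkeeping in the last paragraph rather than anything logically deep. Two points need care. First, one must use the \emph{small}-product rule and not the large-sum rule: summation over arbitrary relations $B\subseteq V^\rho$ is permitted in $\SOLEVAL$, but multiplication over such a set is not, so the product must genuinely be indexed by a fixed-arity vertex tuple — which is why the constant bound $s(G)$ and the $Init$-encoding of Lemma \ref{le:sol-2} are essential. Second, the restriction $\chi$ must be present so that the product has exactly $M$ factors; taking the product over all of $V^{m\cdot s}$ would append spurious factors $(0+\sum_i X_i)$ for out-of-range index tuples and thereby multiply $Q^s_a$ by a power of $\sum_i X_i$, which, although harmless for stability, would disturb the coefficient extraction used in part (ii) of the proof of Theorem \ref{th:main-stable}.
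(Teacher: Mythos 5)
Your proposal is correct and is essentially the paper's own (implicit) argument: the paper states Lemma \ref{le:sol-3} without further proof, as an immediate combination of Lemma \ref{le:sol-1}, Lemma \ref{le:sol-2} and the closure rules of $\SOLEVAL$ --- exactly the route you take (the coefficients $c(\mathbf{v}_1,\ldots,\mathbf{v}_m)$ via the large-sum rule, each factor via closure under addition, and the outer product via the small-product rule over fixed-arity vertex tuples). Your explicit restriction formula $\chi$ selecting exactly the $M$ in-range index tuples makes precise a point that the paper's notation $\prod_{\mathbf{v}_1,\ldots,\mathbf{v}_m}$ leaves implicit, and you handle it correctly.
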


\subsubsection{Proof of Theorem \ref{th:main-hu-stable}}
Now all the coefficients of $P(G;\bX)$ are non-negative.
We want to use Theorem \ref{th:Hurwitz}(\ref{th:homo}) together with 
Theorem \ref{th:determinants}(\ref{th:det-1}). 
We repeat the proof of
Theorem \ref{th:main-stable} with the following changes:
Let $D$ be the diagonal $(M\times M)$-matrix of the coefficients, and $Y$ a new indeterminate.
Instead of $B(a)$ we use $D\cdot Y$ where $Y$ is now a scalar.
$D$ is now a diagonal matrix with non-negative coefficients, so it is positive semi-definite.
We put 
$$
Q(G; Y,\bX)= 
 \det(D\cdot Y + \sum_{i \in [m]} A_i \cdot X_i).
$$ 
The resulting polynomial $Q(G; Y, \bX)$ is homogeneous and has integer coefficients.
So we can apply 
Theorem \ref{th:Hurwitz}(\ref{th:homo}) together with 
Theorem \ref{th:determinants}(\ref{th:det-1}) to make to see that 
$Q(G; Y,\bX)$ is both stable and Hurwitz stable.
In particular, for each $a \in \N$
$Q(G; a,\bX)$ is Hurwitz stable.
To see that
$Q(G;Y,\bX)$ is $\SOL$-definable we again use $a \in \N$ large enough as in the proof of
Theorem \ref{th:main-hu-stable}.

\subsubsection{Proof of Theorem \ref{th:sokalization}}
The proof is the same as the proof of 
Theorem \ref{th:main-hu-stable}, where the number of indeterminates equals the number $m(G) = \mid E(G)\mid$.

\section{Conclusions and open problems}
\label{se:conclu}

In this paper we presented the logician's view of graph polynomials. This includes
model theoretic reinterpretations of some of our previous work on graph polynomials,
such as
\cite{ar:KotekMakowskyZilber11,ar:FischerKotekMakowsky11,phd:Kotek,ar:MakowskyRavve2013,ar:MakowskyRavveBlanchard2014,ar:KotekMakowskyRavve2017arxiv}.
We systematically studied  various notions of semantic equivalence of graph polynomials
based on the notion of distinctive power. We were careful to set up this logical framework
to be consistent with the way graph polynomials are compared in the graph theoretic literature.
We also discussed various forms of graph polynomials, and unified all these under the
framework of $\SOL$-definable graph polynomials.
Within this framework we also have a Normal Form Theorem \ref{th:normal-form}.

In \cite{ar:MakowskyRavve2013,ar:MakowskyRavveBlanchard2014} we initiated
the study of semantic equivalence of univariate graph polynomials without focusing
on definability or complexity. We showed there that the location of the roots
are not a semantic property.

In this paper we have extended these studies  to multivariate graph polynomials.
We have also extended our framework threefold:
\begin{enumerate}
\item
We have imposed computability 
restriction on our framework.
To have a workable framework
it does not suffice that the coefficients of a graph polynomial
have to be computable from the graph, but that one needs to require that
the inverse problem be decidable as well.
This additional requirement was not used in
\cite{pr:MakowskyKotekRavve2013}, where we were more concerned with complexity issues
of evaluating graph polynomials.
\item
We have restricted our discussion to $\SOL$-definable graph polynomials.
This means that the d.p.-equivalent polynomial with stability properties
has to be $\SOL$-definable as well.
In the univariate cases discussed in
\cite{ar:MakowskyRavve2013,ar:MakowskyRavveBlanchard2014} the additional definability
requirement is not too difficult to be established.
In the multivariate case, this is considerably more complicated.
\item
We have studied stability and Hurwitz-stability (aka the half-plane property) 
of multivariate graph polynomials.
We have chosen this topic, because various graph polynomials arising from modeling
natural phenomena turn out to be stable or Hurwitz-stable.
Our study shows that these stability properties do not really reflect properties of
the underlying graphs proper, but are the result of extraneous requirements  arising from
the particular modeling process of the natural phenomena in question.
\end{enumerate}

Our work shows that to justify the study of the location of the zeroes of
a graph polynomial, the particular choice of the coefficients of the graph polynomial
has to be taken into account. If the only purpose of the graph polynomial is to encode
purely graph theoretic properties, the location of its zeroes is irrelevant.

\ifskip\else
\subsection{Complexity issues}
\label{se:compl-discuss}
We have not gone into details concerning complexity.
\begin{definition}
Let $P$ and $Q$ be two d.p.-equivalent graph polynomials.
We say that $P$  is
{\em polynomially d.p.-reducible (P.d.p-reducible)}
to $Q$ if $F$ from Theorem \ref{prop:equiv} is computable in polynomial time.
$P$ and $Q$  P.d.p.-equivalent if they are P.d.p.-reducible to each other.
\end{definition}
All our theorems  and examples changing the locations of zeroes produce P.d.p.-equivalent polynomials.

We have avoided giving a definition of the complexity of a computable or $\SOL$-definable $P(G;\bX)$
graph polynomial because of our requirement in Definition \ref{def:comp} that the range of $P$ be computable.
In order to get a reasonable framework one would have to require, that for $P$ and $Q$ both computable
in some time complexity class 
$\mathrm{C}$
and $P$ d.p.-reducible to $Q$, the function $F$ from Theorem \ref{prop:equiv}
should also be computable in  time in
$\mathrm{C}$.
To achieve this we would have to require that given $s \in \Z[\bX]$ finding a graph $G_s$ such that
$P(G_s,\bX)=s$ can be done also in  time in
$\mathrm{C}$.
This is a stronger requirement than
limiting the complexity of
$\beta_P$ from Proposition \ref{prop:beta}. 
Furthermore, it is unlikely to be true for reasonably interesting graph polynomials.

\subsection{Open problems}
The landmark paper
\cite{ar:JaegerVertiganWelsh90} initiated the study of the complexity of evaluating
the Tutte polynomial, cf. also \cite{bk:Welsh93}.
In \cite{pr:MakowskyKotekRavve2013} we survey later developments of this approach.
Surprisingly, the complexity of the inverse problem was, to the best of our knowledge, not studied
even for the most prominent graph polynomials.

Let $P(G;\bX)$ be a computable $\SOL$-definable graph polynomial.
The {\em Recognition Problem for $P(G;\bX)$} is the question
\begin{quote}
Given a polynomial $s(\bX) \in \Z[\bX]$
is there a graph $G_s$ such that $P(G_s;\bX)=s(\bX)$?
\end{quote}

\begin{problem}
\label{inverse-problem}
\begin{enumerate}
\item
What is the complexity of the  Recognition Problem for $P(G;\bX)$ for
$\SOL$-definable graph polynomials?
\item
For all $\SOL$-definable graph polynomials studied in the literature, the Recognition Problem can be solved
in exponential time.
Is this true for all $\SOL$-definable graph polynomials?
\item
What is the exact complexity  of the Recognition Problem for specific graph polynomials, such as the
the polynomials $P_{A}$ and $P_L$ from spectral graph theory 
given in Example \ref{ex:prominent}(\ref{ex:spectral}),
chromatic polynomial, or the Tutte polynomial?
\end{enumerate}
\end{problem}
It is easy to define graph polynomials $P(G;X)$ with a {\em trivial Recognition Problem}, i.e. where for every
polynomial 
$$s(X) = \sum_{i=0}^m a_i X^i \in \N[X]$$ 
there is a graph $G_s$ with $P(G_s;X)= s(X)$.

Let 
$$MaxCl(G;X) = \sum_i mcl_i(G) X_i$$
be the  graph polynomial 
where $mcl_i(G)$ denotes the number of maximal cliques of size $i$.
Let $s(X) = \sum_{i=0}^m a_i X^i \in \N[X]$ and let $G_s$ be the graph which is the disjoint union
of $a_i$-many cliques of size $i$. Then $MaxCl(G_s;X) = s(X)$.

\begin{problem}
Characterize the $\SOL$-definable graph polynomials with trivial Recognition Problem.
\end{problem}

\begin{problem}
If $P(G;\bX)$ is $\SOL$-definable and its Recognition Problem is decidable 
in polynomial time, what is the complexity of finding, given $s(X) = \sum_{i=0}^m a_i X^i \in \N[X]$,
a graph $G_s$ with $P(G_s;X)= s(X)$?
\end{problem}

\fi 

\subsection*{Acknowledgment}
The authors would like to thank Petter Br\"and\'en for
guiding us to the literature of stable polynomials, and Jason Brown
and four anonymous readers of an earlier version of this paper for valuable comments.
Thanks also to Jingcheng Lin for pointing out the references \cite{borcea2009negative} and \cite{anari2016monte}.
We want to acknowledge that some of the definitions and examples
were taken verbatim from 
\cite{ar:KotekMakowskyZilber11,phd:Kotek,ar:MakowskyRavveBlanchard2014} and \cite{ar:KotekMakowskyRavve2017}.
D.p-equivalence was first characterized in \cite{up:Lecture-11}.
\section*{References}
\newcommand{\etalchar}[1]{$^{#1}$}

\end{document}